\documentclass{amsart}
\usepackage{amssymb,amstext,amsmath,amscd,amsthm,amsfonts,enumerate,graphicx,latexsym,stmaryrd,multicol,bm,xcolor}
\usepackage{hyperref}
\usepackage{cleveref}
\hypersetup{colorlinks=true}
\usepackage[all]{xy}

\newtheorem{thm}{Theorem}[section]
\newtheorem{lem}[thm]{Lemma}
\newtheorem{prop}[thm]{Proposition}
\newtheorem{cor}[thm]{Corollary}
\theoremstyle{definition}
\newtheorem{dfn}[thm]{Definition}
\newtheorem{ques}[thm]{Question}
\newtheorem{rem}[thm]{Remark}
\newtheorem{eg}[thm]{Example}
\newtheorem{nota}[thm]{Notation}
\theoremstyle{remark}
\newtheorem*{ac}{Acknowledgments}

\numberwithin{equation}{thm}
\def\A{\mathcal{A}}
\def\add{\operatorname{add}}

\def\BB{\operatorname{BB}}

\def\dimvec{\underline{\dim}}
\def\C{\mathcal{C}}
\def\codim{\operatorname{codim}}
\def\cok{\operatorname{Cok}}

\def\dim{\operatorname{dim}}

\def\edim{\operatorname{edim}}
\def\Ext{\operatorname{Ext}}
\def\gHom{\operatorname{Hom}^{\mathbb{Z}}}
\def\gmod{\operatorname{mod}^{\mathbb{Z}}\!}
\def\Hom{\operatorname{Hom}}
\def\id{\operatorname{id}}
\def\image{\operatorname{Im}}
\def\ker{\operatorname{Ker}}
\def\L{\Lambda}
\def\le{\leqslant}
\def\ge{\geqslant}
\def\m{\mathfrak{m}}
\def\mod{\operatorname{mod}}
\def\n{\mathfrak{n}}
\def\O{\mathcal{O}}

\def\rad{\operatorname{rad}}
\def\reg{\operatorname{reg}}
\def\Serre{\operatorname{Serre}}
\def\soc{\operatorname{soc}}

\def\T{\mathsf{T}}
\def\thick{\operatorname{thick}}
\def\Tor{\operatorname{Tor}}
\def\X{\mathcal{X}}

\def\Z{\mathbb{Z}}

\def\bfa{\mathbf{a}}
\def\bfb{\mathbf{b}}
\def\bfs{\mathbf{s}}
\def\bft{\mathbf{t}}

\def\bfp{\mathbf{p}}
\def\bfq{\mathbf{q}}
\def\bfu{\mathbf{u}}
\def\bfv{\mathbf{v}}

\def\bfz{\mathbf{z}}
\begin{document}
\allowdisplaybreaks

\title{Thick subcategories over weakly symmetric algebras with radical cube zero}
\author{Kaito Kimura}
\address{Department of Mathematics, Purdue University, 150 N. University Street, West Lafayette, IN 47907, USA}
\email{m21018b@gmail.com}
\thanks{2020 {\em Mathematics Subject Classification.} Primary: 16G10, Secondary: 18G65, 13C60, 13H10}
\thanks{{\em Key words and phrases.} weakly symmetric, radical cube zero, thick subcategory, Gorenstein, dominant}

\begin{abstract} 
In this paper, we study thick subcategories of the (stable) module categories of finite-dimensional weakly symmetric algebras with radical cube zero.
When the spectral radius of the Ext matrix is two, thick subcategories containing the algebra, equivalently thick subcategories of the stable category, are classified by Serre subcategories of a certain abelian category; when it is not two, only the trivial thick subcategories occur.
As an application, this yields, to the best of our knowledge, the first examples of commutative Noetherian Gorenstein dominant local rings that are not complete intersections.
\end{abstract}

\maketitle

\section{Introduction}

Let $k$ be an algebraically closed field and let $\L$ be an indecomposable finite-dimensional weakly symmetric $k$-algebra with radical cube zero.
For a complete set of representatives $S_1,\ldots,S_r$ of the isomorphism classes of simple modules, the real symmetric matrix $E$ defined by $E_{ij}=\dim_k\Ext_\L^1(S_i,S_j)$ is called the Ext matrix, and we denote its spectral radius by $\lambda$.
Benson \cite{Ben} showed that the growth of minimal projective resolutions of modules is governed by $\lambda$, and in particular identified two as the critical value.
More precisely, the dimensions of the terms in a minimal projective resolution of a nonprojective module are bounded when $\lambda<2$, are either bounded or grow linearly when $\lambda=2$, and grow exponentially when $\lambda>2$.
This trichotomy also corresponds to the representation type of algebras in this class: the cases $\lambda<2$, $\lambda=2$, and $\lambda>2$ correspond to finite, tame, and wild representation type, respectively.
Related phenomena arising from this spectral trichotomy have also been studied from several viewpoints; see, for example, \cite{ESc,ESo2,ESo}.
Erdmann \cite{E}, in the same setting, studied the existence of Ext-finite nonprojective modules, that is, nonprojective modules $M$ satisfying $\Ext_\L^{\gg 0}(M,M)=0$, and proved that such a module can exist only when $\lambda=2$.

The main results of this paper describe the distinction among the cases $\lambda<2$, $\lambda=2$, and $\lambda>2$ from the viewpoint of the structure of thick subcategories of the category $\mod\L$ of finitely generated left $\L$-modules and of its stable category $\underline{\mod}\L$.
A subcategory is said to be \textit{thick} if, in the abelian case, it is closed under direct summands, kernels of epimorphisms, cokernels of monomorphisms, and extensions, and if, in the triangulated case, it is a full triangulated subcategory closed under direct summands.
When $\lambda\ne2$, there are no nontrivial thick subcategories containing $\L$.

\begin{thm}\label{thm1.1}
If $\lambda\ne 2$, then the only thick subcategories of $\mod\L$ containing $\L$ are $\add\L$ and $\mod\L$. In particular, the stable module category $\underline{\mod}\L$ has no nonzero proper thick subcategory.
\end{thm}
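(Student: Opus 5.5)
Since $\L$ is self-injective, $\underline{\mod}\L$ is triangulated, and thick subcategories of $\mod\L$ containing $\L$ correspond bijectively to thick subcategories of $\underline{\mod}\L$. The task is therefore to show that any nonzero thick subcategory $\T\subseteq\underline{\mod}\L$ contains every simple module $S_i$. Indeed, every module has a finite composition series, and the short exact sequences of the series give triangles in $\underline{\mod}\L$, so $\T$ would then be everything. We may fix a nonprojective indecomposable $M\in\T$. Because $\rad^3\L=0$, the Koszul-type structure is very rigid. Each module of Loewy length $3$ is projective plus something, and each nonprojective module is, up to syzygy, a module of Loewy length at most $2$. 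Replacing $M$ by $\Omega M$ or $\Omega^2M$, which also lie in $\T$, we may assume $\rad^2 M=0$, so $M$ is given by a representation of the separated quiver: a top $\bfa=\dimvec(M/\rad M)$, a socle $\bfb=\dimvec(\rad M)$, and linear maps. Write $\bfp_n=\dimvec(\Omega^n M/\rad\Omega^nM)$. Radical cube zero together with weak symmetry gives the recursion $\bfp_{n+1}=E\bfp_n-\bfp_{n-1}$ on top vectors, valid for syzygies of modules with no projective summand, as in Benson's argument.

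For $\lambda<2$, $\L$ has finite type; it is a Brauer tree/Dynkin-type algebra with $E$ the adjacency matrix of a Dynkin graph of type ADE or the $L_n$ loop. In that case $\underline{\mod}\L$ is the stable category of the $\Z\Delta/\tau$ type, and the plan is to show directly that the thick closure of any indecomposable contains a simple. The Auslander--Reiten triangles are $\tau=\Omega^2$-periodic, and thick subcategories of a finite type stable category of a radical cube zero algebra are unions of components... Since there is only one component, I would use the following approach instead: $\Ext$ between $M$ and simples is nonzero, and the recursion has no nonnegative solutions unless the vectors combine to simples. Concretely, if the recursion with $\lambda<2$ is applied to nonnegative integer vectors, it is periodic (Chebyshev with eigenvalues on the unit circle). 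Taking cones of suitable maps $\Omega^{n}M\to\Omega^{m}M$ in $\T$, for instance using the triangle $\Omega M\to P\to M$, should cut down the dimension vector until a simple is reached.

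For $\lambda>2$, the key step is a dimension argument. Consider the minimal nonzero module $N\in\T$ of Loewy length at most $2$, minimal in total dimension. By Perron--Frobenius, the growth of $\bfp_n$ is exponential in the Perron direction. For any $X$ with $\rad^2X=0$, there is a short exact sequence $0\to\rad X\to X\to X/\rad X\to0$, whose terms are semisimple. I would show that $\T$ being thick and containing $N$ forces it to contain a semisimple module. The idea is to apply $\Hom(-,S_j)$ and the nonvanishing of $\Ext^n(N,S_j)$ for all large $n$ and all $j$ (exponential growth plus connectedness of $E$) to produce a nonsplit map $\Omega^n N\to S_j$ whose cone is smaller; one then concludes by minimality. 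Once some $S_i\in\T$, we also get $\Omega^{-1}S_i\cong$ the cokernel of $S_i\to P_i$, and the exact sequence $0\to S_i\to P_i\to \Omega^{-1}S_i\to 0$, with $\rad P_i/\soc P_i\cong\bigoplus S_j^{E_{ij}}$, yields $S_j\in\T$ for all neighbors $j$. The case of a semisimple module containing $S_i$ is handled by taking summands. Connectedness of the Ext quiver (indecomposability of $\L$) then gives all simples.

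The main obstacle is the step producing a semisimple object in $\T$ from an arbitrary $M$, i.e.\ ruling out thick subcategories generated by a non-semisimple Loewy length $2$ module. I would attack it by showing that for $\lambda\neq2$ there is no nonzero vector $\bfv$ with $E\bfv=2\bfv$. Modules of ``type $\lambda=2$'' (those with $\bfp_n$ linearly growing or bounded with proper support, as in Erdmann's Ext-finite modules) are exactly what would yield proper thick subcategories. Thus I would prove that if $\thick M$ contains no simple, then $\dimvec$ of the top and socle of $\Omega^nM$ satisfy $E\bfa=2\bfa$, via the additive function on triangles given by $\bfp\mapsto(\bfa-\bfb)$ modulo the image of $E-2$. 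This is impossible unless $\lambda=2$.
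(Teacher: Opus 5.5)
Your reduction to the stable category is fine. So is your propagation argument: one simple in $\mathcal{T}$ forces all simples, via $\Omega^{-1}S_i=P_i/\soc P_i$ and connectedness of the Ext quiver (this is Lemma~\ref{thick to abelian}(1)). The genuine gap is the case $\lambda>2$, where the proposal has no working mechanism.

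\begin{itemize}
\item The nonsplit map $\Omega^nN\to S_j$ you propose has its target outside $\mathcal{T}$; that $S_j\in\mathcal{T}$ is exactly what you want to show. So its cone is not in $\mathcal{T}$, and the minimality of $N$ says nothing about it.
\item Cones in $\underline{\mod}\L$ are not kernels or cokernels of module maps, so there is no sense in which they are ``smaller''.
\item The nonvanishing of $\Ext^n_\L(N,S_j)$ only records the Betti vectors of $N$. It carries no information about $\mathcal{T}$.
\item Your fallback, an ``additive function'' $\bfa-\bfb$ modulo $\image(E-2)$, is not additive on triangles. Tops and radicals are not additive along short exact sequences, and ungraded triangles need not come from graded ones.
\item Nothing in your sketch forces $E\bfa=2\bfa$; that claim is essentially the theorem itself.
\end{itemize}

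Two ideas are missing.

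First, kernels and cokernels of maps \emph{between objects of the thick subcategory} must stay inside it. The paper grades $\L=\L_0\oplus\L_1\oplus J^2$ and places modules without projective summands in degrees $0,1$. It then proves (Proposition~\ref{kernel cokernel}) that the kernel and cokernel of any degree-preserving map between such modules lie in $\thick\{\L,M,N\}$; the proof uses the graded vanishing $\Ext^2_\L(C,K)_0=0$ to split a pullback sequence. Consequently $\A_\X$ is abelian and $\Omega$ is an exact autoequivalence of it.

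Second, you need a nonzero map between objects of $\A_\X$. Take $X\in\A_\X$ of minimal Perron weight $\bfv^{\T}(\bft+\bfs)$, and set $\alpha=\bfv^{\T}\bft$, $\beta=\bfv^{\T}\bfs$. Comparing with $\Omega^{\pm1}X$ gives $\lambda\alpha\ge2\beta$ and $\lambda\beta\ge2\alpha$. The Chebyshev computation of Lemma~\ref{zenkashki}(2), with $\bfq_0=\bft$, $\bfq_1=\bfs$, then gives $\dim_k\gHom_\L(\Omega^{-n}X,X)\ge\bfq_n^{\T}\bft-\bfq_{n-1}^{\T}\bfs>0$ for some $n$. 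Minimality together with the first idea forces this map to be an isomorphism. Hence the Perron weights $x_m$ are periodic, and summing $x_{m+1}+x_{m-1}=\lambda x_m$ over a period contradicts $\lambda>2$.

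For $\lambda<2$ your sketch is also incomplete. ``Cutting down the dimension vector by cones'' is not an argument, and on nonnegative vectors the recursion is not ``periodic''; it simply cannot stay nonnegative. Here, though, you already hold the pieces.

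\begin{enumerate}
\item First correct the recursion. $\bfp_{n+1}=E\bfp_n-\bfp_{n-1}$ is exact only as long as the syzygies have no simple summands, i.e.\ $\rad\Omega^nM=\soc\Omega^nM$. In general one only gets a componentwise inequality.
\item If $\mathcal{T}$ is proper it contains no simple module. So every indecomposable nonprojective object of $\mathcal{T}$, and all its syzygies and cosyzygies, satisfy $\rad=\soc$. Hence the recursion holds for all $n\in\Z$, with every $\bfp_n\ne0$.
\item Then $x_n=\bfv^{\T}\bfp_n\ge\min_iv_i>0$ and $x_{n+1}+x_{n-1}=\lambda x_n$. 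The second differences of $(x_n)$ are at most $(\lambda-2)\min_iv_i<0$, which is impossible (Lemma~\ref{two sided zenkashiki}).
\end{enumerate}

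No finite-type classification is needed. This positivity argument says nothing when $\lambda>2$: $A\rho^n+B\rho^{-n}$ with $A,B>0$ and $\rho>1$ is a positive solution. That is precisely why the case $\lambda>2$ needs the two ingredients above.
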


\noindent The proof of this result is given in Section 4, and the theorem generalizes the aforementioned result of Erdmann \cite{E}.
Indeed, higher Ext-orthogonality with respect to a fixed module defines a thick subcategory, and the triviality of such thick subcategories forces the module to be projective; see Corollary \ref{Ext Tor rigidity}.

The situation is quite different when $\lambda=2$. 
For instance, the class of all modules whose minimal projective resolutions are bounded clearly forms a nontrivial thick subcategory.
This naturally raises the question of what the nontrivial thick subcategories are.
Our next main result shows that the structure of minimal projective resolutions, which already plays an important role in \cite{Ben}, is closely related to the structure of thick subcategories.
Since $\L$ has radical cube zero, both $\L$ itself and modules with bounded minimal projective resolutions admit natural gradings.
This gives rise to an abelian category $\A$ consisting of modules with bounded minimal projective resolutions, and the self-injectivity of $\L$ implies that the syzygy functor over $\L$ induces an exact autoequivalence $\Omega$ of $\A$.
The details of these constructions are given successively in Sections 3, 4, and 5.
With this notation, the classification in the case $\lambda=2$ can be stated explicitly as follows.

\begin{thm}\label{thm1.2}
If $\lambda=2$, then there are order-preserving bijections
$$
\left\{
\begin{matrix}
\text{Thick subcategories of}\\
\text{$\mod\L$ containing $\L$}
\end{matrix}
\right\}
\longleftrightarrow
\left\{
\begin{matrix}
\text{$\Omega$-stable Serre}\\
\text{subcategories of $\A$} 
\end{matrix}
\right\}\sqcup\{\gmod\L\}
\longleftrightarrow
\left\{
\begin{matrix}
\text{Thick subcategories}\\
\text{of $\underline{\mod}\L$}
\end{matrix}
\right\}.
$$
\end{thm}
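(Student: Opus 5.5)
The plan is to prove the two bijections separately. The right-hand one, between thick subcategories of $\mod\L$ containing $\L$ and thick subcategories of $\underline{\mod}\L$, is standard for a self-injective algebra, so I would dispose of it first. A thick subcategory $\X\supseteq\add\L$ of $\mod\L$ is closed under syzygies and cosyzygies: use the exact sequences $0\to\Omega M\to P\to M\to0$ and $0\to M\to I\to\Omega^{-1}M\to0$ with $P,I$ projective-injective. Hence its image in $\underline{\mod}\L$ is closed under shifts and cones. Conversely, the preimage of a thick subcategory of $\underline{\mod}\L$ is thick in $\mod\L$, since short exact sequences give triangles. Both assignments are order-preserving and mutually inverse.

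For the left-hand bijection I would use the graded machinery of Sections 3--5. I take $\A$ to be the abelian category of graded modules with bounded (linear) minimal projective resolutions, with the exact autoequivalence $\Omega$. I would then associate to a thick subcategory $\X\ne\mod\L$ containing $\L$ the subcategory $\Phi(\X)\subseteq\A$ consisting of those objects whose underlying ungraded module lies in $\X$. Conversely, to an $\Omega$-stable Serre subcategory $\mathcal S\subseteq\A$ I would associate $\Psi(\mathcal S)$, the smallest thick subcategory of $\mod\L$ containing $\L$ and the forgetful images of $\mathcal S$. The extra element $\gmod\L$ is sent to $\mod\L$.

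The steps, in order, are as follows.
\begin{enumerate}
\item Show that $\Phi(\X)$ is Serre and $\Omega$-stable. $\Omega$-stability follows from the syzygy closure above. Closure under subobjects, quotients and extensions in $\A$ should follow from thickness once one knows that short exact sequences in $\A$ map to short exact sequences in $\mod\L$. For subobjects one uses an argument via syzygies: $\Omega$-stable plus thick gives closure under kernels of epis, and the self-injectivity gives closure under cokernels of monos.
\item Show the key structural fact, which is presumably the analogue of Theorem~\ref{thm1.1} at $\lambda=2$: a thick subcategory containing $\L$ and some module with unbounded (linearly growing) resolution is all of $\mod\L$. This explains the extra point $\gmod\L$. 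Granting it, every proper $\X$ consists of modules with bounded resolutions.
\item Show that every module with a bounded minimal resolution is, up to projective summands, a finite extension or syzygy-built object from modules in $\A$. Then $\X$ is generated by $\L$ together with $\Phi(\X)$, giving $\Psi\Phi=\id$.
\item Show $\Phi\Psi(\mathcal S)=\mathcal S$. The inclusion $\supseteq$ is clear. For $\subseteq$, I would check that the class of modules built from $\L$ and $\mathcal S$ in at most $n$ steps has all its $\A$-components (e.g.\ composition factors in $\A$ of the graded pieces of its eventual linear resolution) in $\mathcal S$, and argue by induction on $n$ using the Serre property and $\Omega$-stability.
\end{enumerate}

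I expect two steps to be the main obstacles. The first is step 2, the dichotomy that any module with linearly growing resolution generates everything. I would try to adapt the Perron--Frobenius argument used for Theorem~\ref{thm1.1}: such a module's high syzygies have dimension vectors approaching the Perron eigenvector direction, and I would use this to produce all simples in the thick closure. The second is the invariant needed in step 4, namely making precise a well-defined ``$\A$-support'' of a bounded-resolution module that is additive on exact sequences. For that invariant I would first try using the stable graded pieces of the minimal resolution, which become periodic objects of $\A$ up to $\Omega$.
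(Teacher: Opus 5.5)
Your overall architecture matches the paper's: $\Phi=\A_{(-)}$, $\Psi=\thick((-)\cup\{\L\})$, the extra point $\gmod\L\leftrightarrow\mod\L$, and the standard correspondence with $\underline{\mod}\L$. However, step 1 does not work as written, and it is where the real content lies.

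Thickness is only a two-out-of-three property. For a subobject $L\subseteq M$ in $\A$ with $M\in\Phi(\X)$, neither $L$ nor $M/L$ is known to lie in $\X$, and closure under $\Omega^{\pm1}$ does not change this. Even closure of $\Phi(\X)$ under kernels and cokernels of \emph{arbitrary} morphisms, which is needed for it to be an abelian subcategory at all, is not formal. The paper gets it from Proposition~\ref{kernel cokernel}, which uses the grading essentially. Pull back $f\colon M\to N$ along a graded free cover $g\colon F\to N$, and let $p\colon N\to\cok f$. This gives $0\to\ker f\to G\to\ker(pg)\to0$ with $G\in\thick\{\L,M,N\}$. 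The sequence splits in $\gmod\L$ because $\Ext^1_\L(\ker(pg),\ker f)_0\cong\Ext^2_\L(\cok f,\ker f)_0=0$ for degree reasons.

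Passing from ``abelian, extension-closed, $\Omega$-stable'' to ``Serre'' then requires showing that a simple object of $\Phi(\X)$ stays simple in $\A$. For that you need the idea missing from your plan, Proposition~\ref{simple orthogonal}: simple objects of $\A$ in different $\Omega$-orbits have vanishing degree-zero $\Ext^1$ in both directions. Its proof runs as follows.
\begin{itemize}
\item The dimension vectors of $\Omega^iS$ are periodic (Lemma~\ref{periodic}).
\item Hence $\bfu:=\sum_{i=0}^{p-1}\bft(\Omega^iS)=\sum_{i=0}^{p-1}\bfs(\Omega^iS)$ satisfies $E\bfu=2\bfu$.
\item Summing the Euler-form identity of Lemma~\ref{Ext calculation} over a period, in both directions, gives $\sum_{i=0}^{p-1}\bigl(\dim_k\Ext^1_\L(\Omega^iS,T)_0+\dim_k\Ext^1_\L(T,\Omega^iS)_0\bigr)=0$ whenever $T$ is outside the orbit of $S$.
\end{itemize}
By Lemma~\ref{orbit decomposition}, indecomposables of $\A$ therefore have all composition factors in one orbit. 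So a simple $X$ of $\Phi(\X)$ that is not simple in $\A$ would give $\Omega^nX\twoheadrightarrow\Omega^nT\cong S\subsetneq X$, a nonzero non-isomorphism between simple objects of $\Phi(\X)$. Closure under subobjects and quotients then follows by intersecting a composition series.

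The same orthogonality is exactly the ``$\A$-support'' you are looking for in step 4. Without it you have no candidate invariant that behaves well on ungraded triangles. Periodicity of syzygies cannot be the mechanism either: the objects themselves need not be $\Omega$-periodic, only their dimension vectors are. The paper lifts the orthogonality to $\underline{\mod}\L$ (Proposition~\ref{stable orbit decomposition}). For $M,N$ in Serre closures of distinct orbits:
\begin{itemize}
\item degree-$0$ maps vanish by orthogonality;
\item degree-$(-1)$ maps vanish because $M_1=JM_0$;
\item degree-$1$ maps factor through the projective cover of $N$ because $\Ext^1_\L(M,\Omega N)_0=0$.
\end{itemize}
Hence $\underline{\BB}(\L)$ is an orthogonal sum of the thick subcategories $\underline{\Serre}(\O)$. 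For $\mathcal S=\Serre_\A(W)$, the additive hull of those pieces with $\O\subseteq W$ is thick and contains $\pi(\Psi(\mathcal S))$. So any orbit component outside $W$ of an object of $\Phi\Psi(\mathcal S)$ is stably zero, hence zero.

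For step 2, do not try to generate the simples from a linearly growing module; argue contrapositively. In a proper $\X$ no simple module occurs, by irreducibility of $E$. So every nonzero object of $\Phi(\X)$ has $\soc M=\rad M=JM_0\neq0$, and graded syzygies and cosyzygies stay in $\Phi(\X)$ indefinitely. Their dimension vectors form a bi-infinite sequence of nonzero vectors in $\Z_{\ge0}^r$ with $\bfq_{m+1}=E\bfq_m-\bfq_{m-1}$. Pairing with the Perron vector gives a function that is affine in $m$ and positive on all of $\Z$, hence constant, so the Betti numbers are bounded (Lemma~\ref{two sided zenkashiki}, Proposition~\ref{bounded betti thick}).

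Step 3 is immediate: a module with no projective summand is annihilated by $J^2$, so once graded it already lies in $\Phi(\X)$.
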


\noindent The proof of this result is given in Section 5. The category $\A$ arising in our classification can be identified with the category of regular modules over a hereditary algebra of Euclidean type; see Remark \ref{hereditary}. Thus, the classical theory of Euclidean regular representations provides a model for the thick subcategory structure of weakly symmetric algebras at the critical spectral radius $\lambda=2$.
Together with the blockwise extension in Section 6, Theorems \ref{thm1.1} and \ref{thm1.2} give a complete description of the thick subcategories of finite-dimensional weakly symmetric $k$-algebras with radical cube zero without assuming indecomposability; see Theorem \ref{general classification}.
We also note that, in the above results, it is not necessary to assume that $k$ is algebraically closed; it suffices to assume that $\L$ is split.

An important application of Theorem \ref{thm1.1} lies in commutative algebra.
Takahashi \cite{T23} introduced \textit{dominance} as a property of commutative Noetherian local rings, providing a framework in which thick subcategories of the category of finitely generated modules can be classified in terms of specialization-closed subsets of the spectrum.
A commutative Noetherian local ring $(R,\m,k)$ is said to be \textit{dominant} if, for every finitely generated $R$-module $M$ of infinite projective dimension, the smallest thick subcategory containing both $R$ and $M$ contains $k$.
In particular, when $R$ is Artinian, $R$ is dominant if and only if the only thick subcategories containing $R$ are $\add R$ and $\mod R$.
Hypersurface local rings are typical examples of dominant local rings, and many other classes of dominant rings are now known; see \cite{KT, T23}.
However, these classes are either incompatible with the Gorenstein property or have the property that their Gorenstein members are necessarily hypersurfaces.
Consequently, no dominant Gorenstein local ring that is not a hypersurface had previously been known.
This led to open questions concerning dominance of Gorenstein local rings in \cite[Question 9.6]{T23} and \cite[Question 7.1]{KT}.

\begin{ques}\label{ques dom Gor intro}
\begin{enumerate}[\rm (1)]
\item Does there exist a dominant Gorenstein local ring that is not a hypersurface?
\item Is a Gorenstein local ring with radical cube zero and embedding dimension greater than two dominant?
\end{enumerate}
\end{ques}

\noindent There are two opposite directions in which hypersurfaces can be generalized, namely complete intersections and Golod rings.
It is known that a dominant complete intersection local ring must be a hypersurface.
On the other hand, Golod rings are conjectured to be dominant.
From this perspective, whether dominant rings exist among Gorenstein rings, which generalize complete intersections, has been one of the most important questions for understanding the ubiquity of dominance.
Theorem \ref{thm1.1}, however, gives an affirmative answer to Question \ref{ques dom Gor intro}(2), and hence also to Question \ref{ques dom Gor intro}(1), in the case of rings containing a field.

\begin{cor}\label{cor1.4}
Every commutative finite-dimensional Gorenstein local $k$-algebra with radical cube zero and embedding dimension greater than two is dominant.
\end{cor}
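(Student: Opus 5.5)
We deduce Corollary \ref{cor1.4} from Theorem \ref{thm1.1}. Let $R$ be a commutative finite-dimensional Gorenstein local $k$-algebra with maximal ideal $\m$, $\m^3=0$, and embedding dimension $e=\dim_k \m/\m^2\ge 3$.

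We first check that $R$ is in the setting of Theorem \ref{thm1.1}. Since $R$ is local, it is indecomposable and has a unique simple module $k$. Since $R$ is Artinian Gorenstein, it is self-injective. A commutative Frobenius algebra is symmetric, because the socle functional gives a symmetric nondegenerate form, so $R$ is in particular weakly symmetric. Moreover $R/\m=k$, so $R$ is split over $k$, which is all Theorem \ref{thm1.1} requires. If $k$ is not algebraically closed, we rely on the remark after Theorem \ref{thm1.2} that splitness suffices.

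Next we compute $\lambda$. With one simple module, the Ext matrix is the $1\times1$ matrix $E=(\dim_k\Ext^1_R(k,k))$. Its entry is the number of minimal generators of $\m$, namely $\dim_k \m/\m^2=e$. Hence $\lambda=e\ge3$, and in particular $\lambda\ne2$. (The degenerate cases are excluded by $e\ge3$; for instance $e\le 1$ would give a hypersurface.)

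Finally we apply Theorem \ref{thm1.1}: the only thick subcategories of $\mod R$ containing $R$ are $\add R$ and $\mod R$. Now let $M$ be a finitely generated $R$-module of infinite projective dimension, so $M\notin\add R$. The smallest thick subcategory containing $R$ and $M$ contains $R$ and is not $\add R$, so it equals $\mod R$ and therefore contains $k$. Thus $R$ is dominant. Equivalently, as noted in the introduction, for Artinian $R$ dominance is exactly the triviality statement of Theorem \ref{thm1.1}.

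The only point needing care is the first step: confirming that commutative Gorenstein Artinian local algebras are (weakly) symmetric and split. Once that is in place, the corollary is a direct specialization of Theorem \ref{thm1.1}.
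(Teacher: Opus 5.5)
Your proof is correct and follows the paper's own argument: $R$ is a split, indecomposable, weakly symmetric algebra with radical cube zero, $E=(\edim R)$, so $\lambda=\edim R\ge 3$. Theorem \ref{thm1.1} (here Theorem \ref{more than two}) then gives $\thick_{\mod R}\{R,M\}=\mod R\ni k$ for every $M\notin\add R$. One small caveat: for a field $k$ that is not algebraically closed, $R/\m$ may be a proper extension of $k$, so instead of asserting $R/\m=k$ you should regard $R$ as an algebra over a coefficient field $K\cong R/\m$, as the paper does in Theorem \ref{main commutative}(1).
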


\noindent Indeed, the embedding dimension corresponds to the spectral radius $\lambda$.
When the embedding dimension is less than two, the ring is either a field or a hypersurface and hence dominant, whereas when the embedding dimension is two, it is a non-hypersurface complete intersection local ring and is already known not to be dominant.
Moreover, a Gorenstein local ring with $\m^2=0$ is a hypersurface, while non-dominant Gorenstein rings with $\m^4=0$ are known to exist.
Thus, the case $\m^3=0$ was the remaining boundary case between these two behaviors.
Corollary \ref{cor1.4} provides further evidence for the rich ubiquity of dominance as a property of local rings.

The organization of this paper is as follows.
Sections 2 and 3 are devoted solely to preliminaries for the main results. 
In Section 2, we describe the special behavior of graded modules concentrated in two degrees, while in Section 3, after equipping $\L$ with a suitable grading, we formulate an autoequivalence induced by the syzygy functor. 
In Sections 4 and 5, we treat the cases $\lambda\ne 2$ and $\lambda=2$, respectively, and prove Theorems \ref{thm1.1} and \ref{thm1.2} using the lemmas established in Sections 2 and 3. 
Section 6 is devoted to stating the corresponding result in the decomposable case. 
Finally, in Section 7, we discuss applications to commutative rings and compare our results with previous work.

\section{Graded modules and thick subcategories}

In this section, we prepare lemmas that form the foundation of the proofs of the main results of this paper. In particular, we consider special properties of graded modules concentrated in degrees zero and one. Throughout this section, let $k$ be a field and $A=\bigoplus_{i\ge0}A_i$ a positively graded, not necessarily commutative, left Noetherian $k$-algebra such that $A_0$ is a finite-dimensional semisimple $k$-algebra. 

\begin{dfn}
(1) We denote by $\gmod A$ the category of finitely generated $\Z$-graded left $A$-modules.
A morphism $\varphi:M\to N$ in $\gmod A$ is a homomorphism of left $A$-modules satisfying $\varphi(M_i)\subseteq N_i$ for all $i\in\Z$, and we denote the set of such morphisms by $\gHom_A(M,N)$.
For a graded left $A$-module $N$ and integers $i,n$, we use the convention $N(i)_n=N_{n+i}$.
The \textit{underlying $A$-module} of a graded $A$-module $M$ means the left $A$-module obtained from $M$ by forgetting the grading.

(2) For $M,N\in\gmod A$, we have 
$$
\Hom_A(M,N)=\bigoplus_{i\in\Z}\gHom_A(M,N(i)), \quad
\Hom_A(M,N)_i=\gHom_A(M,N(i)).
$$
Thus, $\Hom_A(M,N)$ is a graded $k$-vector space, and $\Hom_A(M,N)_0=\gHom_A(M,N)$.
The category $\gmod A$ has enough projective objects, and every finitely generated graded $A$-module has a graded projective resolution in $\gmod A$.
The $j$th Ext module $\Ext_A^j(M,N)$ admits the structure of a graded $k$-vector space induced by a graded projective resolution of $M$.
\end{dfn}

The following lemma is a consequence of the more general correspondence between short exact sequences and elements of first Ext modules. For the reader's convenience, we include a proof only of the assertion that will be used later.

\begin{lem}\label{graded Ext1}
Let $L,M,N\in\gmod A$ with $\Ext_A^1(N,L)_0=0$. Then every short exact sequence $0\to L\to M\to N\to0$ in $\gmod A$ splits.
\end{lem}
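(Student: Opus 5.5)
The plan is to construct a degree-zero splitting directly from a graded projective presentation of $N$. Choose a graded projective cover, or any surjection $\pi\colon P\to N$ in $\gmod A$ with $P$ graded projective, and let $K=\ker\pi$, so that we have $0\to K\xrightarrow{\iota} P\xrightarrow{\pi} N\to0$ in $\gmod A$. Graded projectives are projective in $\gmod A$; they are sums of shifts $Ae(i)$ with $e$ a primitive idempotent of $A_0$. So the given epimorphism $g\colon M\to N$ lets us lift $\pi$ to a degree-zero map $\tilde\pi\colon P\to M$ with $g\tilde\pi=\pi$. Then $\tilde\pi\iota$ factors through $f\colon L\to M$, which gives a degree-zero map $h\colon K\to L$ with $fh=\tilde\pi\iota$. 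This is the standard comparison of the extension with the projective presentation, and every map in it lies in $\gHom$.

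Next I use the hypothesis. Computing $\Ext^1_A(N,L)$ from a graded projective resolution whose first two terms are $P_1\to P\to N$, with $P_1\to K$ surjective, the degree-zero part $\Ext^1_A(N,L)_0$ is the cohomology in degree zero of $\Hom_A(P,L)\to\Hom_A(P_1,L)\to\Hom_A(P_2,L)$. Its degree-zero strand is obtained by taking $\gHom$ termwise. Equivalently, by left exactness of $\gHom(-,L)$ applied to $P_2\to P_1\to K\to0$, it is
$$\Ext^1_A(N,L)_0\cong\cok\bigl(\gHom_A(P,L)\to\gHom_A(K,L)\bigr).$$
The vanishing hypothesis therefore yields $u\in\gHom_A(P,L)$ with $u\iota=h$.

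Now put $\tilde\pi'=\tilde\pi-fu\colon P\to M$, again of degree zero. It satisfies $\tilde\pi'\iota=fh-fh=0$, so it factors through $\pi$ as a degree-zero map $s\colon N\to M$ with $s\pi=\tilde\pi'$. Since $gf=0$, we get $gs\pi=g\tilde\pi=\pi$, and $\pi$ is surjective, so $gs=\id_N$. Hence the sequence splits in $\gmod A$.

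The only point needing care is the identification of the degree-zero part of the graded Ext with the cokernel above. It relies on the facts that the grading on $\Hom_A(F,L)$ for finitely generated graded projective $F$ is the direct-sum decomposition into the $\gHom_A(F,L(i))$, and that cohomology commutes with taking the degree-$i$ strand. Both are immediate from the definitions recalled above, since $P$ and $P_1$ can be taken finitely generated because $A$ is left Noetherian.
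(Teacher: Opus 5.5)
Your proof is correct and follows the same idea as the paper, which simply takes the degree-zero part of the exact sequence $\Hom_A(N,M)\to\Hom_A(N,N)\to\Ext_A^1(N,L)$ to lift $\id_N$ along $M\to N$. Your class of $h$ in $\cok\bigl(\gHom_A(P,L)\to\gHom_A(K,L)\bigr)\cong\Ext_A^1(N,L)_0$ is exactly the image of $\id_N$ under the connecting map, so your diagram chase verifies that piece of the long exact sequence by hand; your remark that graded projectives are sums of shifts $Ae(i)$ is true but not needed, since projectivity in $\gmod A$ suffices.
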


\begin{proof}
An exact sequence $0\to L\to M\to N\to0$ in $\gmod A$ induces an exact sequence 
$$
\Hom_A(N,M)\to \Hom_A(N,N)\to\Ext_A^1(N,L)
$$
of graded $k$-vector spaces. Taking the $0$th component of this sequence, $\gHom_A(N,M)\to \gHom_A(N,N)$ is surjective since $\Ext_A^1(N,L)_0=0$, which implies that $M\to N$ splits in $\gmod A$.
\end{proof}

The following lemma gives a sufficient condition for the Ext vanishing assumed in Lemma \ref{graded Ext1}.

\begin{lem}\label{graded Ext2}
Let $L,N\in\gmod A$ and $a=\inf\{i\in\Z\mid N_i\ne0\}$.
Suppose that there is $b\in\Z$ such that $L_i=0$ for all $i\ge b$.
Then, for all $p\ge0$ and $q\in\Z$, one has $\Ext_A^p(N,L)_q=0$ whenever $p+q\ge b-a$.
\end{lem}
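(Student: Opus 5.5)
We compute $\Ext_A^p(N,L)$ from a minimal graded projective resolution $P_\bullet\to N$ in $\gmod A$, and track the degrees in which the generators of each $P_p$ can live. Then we use that $L$ vanishes in high degrees to show that the relevant graded Hom groups vanish.

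\emph{Step 1 (degrees of generators).} Every graded projective in $\gmod A$ is a finite direct sum of shifts $Ae(-j)$, where $e$ is an idempotent of $A_0$; such a summand is generated in degree $j$. We claim that a graded projective resolution can be chosen with $P_p$ generated in degrees $\ge a+p$. For $p=0$: $N$ is generated by finitely many homogeneous elements, all of degree $\ge a$, and since $A_0$ is semisimple each can be taken to generate a summand $Ae(-j)$ with $j\ge a$.

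For the inductive step, take the minimal projective cover $P_p\to\Omega^pN$, where $\Omega^p N$ denotes the $p$th syzygy. Because $A$ is positively graded and $A_0$ is semisimple, the kernel of a projective cover $P\to M$ lies in $A_{\ge1}P$. This follows from graded Nakayama: $P/A_{\ge1}P\to M/A_{\ge1}M$ is an isomorphism for a minimal cover. Consequently, if $P$ is generated in degrees $\ge c$, then the kernel lives in degrees $\ge c+1$. So $\Omega^{p+1}N$ has its lowest degree at least $a+p+1$, and so does its projective cover $P_{p+1}$.

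\emph{Step 2 (Hom vanishing).} We have $\Ext_A^p(N,L)_q=H^p\bigl(\gHom_A(P_\bullet,L(q))\bigr)$, using the identification $\Hom_A(-,L)_q=\gHom_A(-,L(q))$ from the definition. It therefore suffices to show $\gHom_A(P_p,L(q))=0$ when $p+q\ge b-a$. For a summand $Ae(-j)$ of $P_p$ we have $j\ge a+p$, and
$$
\gHom_A(Ae(-j),L(q))\cong eL(q)_j=eL_{j+q}.
$$
Here $j+q\ge a+p+q\ge b$, so $eL_{j+q}=0$ by hypothesis. The cochain group vanishes, and hence so does its cohomology.

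\emph{Main obstacle.} The only delicate point is Step 1, namely that syzygies move up by at least one degree. This rests on graded Nakayama for a positively graded left Noetherian ring whose degree-zero part is semisimple, which guarantees minimal graded projective covers with kernel inside $A_{\ge1}P$. If one prefers to avoid minimality, one can instead take any graded resolution and argue via minimal ones: Ext is independent of the chosen resolution, so it suffices to exhibit one resolution with the degree bound. The minimal one is exactly that resolution.
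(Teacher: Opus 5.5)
Your proof is correct and follows the same route as the paper: you build a graded projective resolution whose $p$th term vanishes in degrees below $a+p$, and then observe that the degree-$q$ part of the Hom complex is already zero term by term when $p+q\ge b-a$. The only difference is how you obtain the degree bound. You use minimal graded projective covers, so the kernel lies in $A_{\ge1}P$ by graded Nakayama; this requires $P_0\to N$ to be minimal as well, as your last paragraph indicates. The paper instead uses the surjection $(A\otimes_{A_0}N_a)\oplus P\to N$, which is an isomorphism in degree $a$ and so avoids Nakayama and minimality altogether.
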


\begin{proof}
We may assume that $N\ne 0$, and thus $a\in\Z$.
Since $A_0$ is semisimple, a finitely generated $A_0$-module $N_a$ is projective, and hence $A\otimes_{A_0}N_a$ is a graded projective $A$-module.
The natural map $A\otimes_{A_0}N_a\to N: a\otimes x\mapsto ax$ is an isomorphism in degree $a$. Combining this with a surjection $P\to \bigoplus_{i\ge a+1}N_i$ in $\gmod A$ where $P$ is a graded projective $A$-module with $P_i=0$ for all $i<a+1$, we obtain a surjection $f: F\to N$ such that $F$ is a graded projective $A$-module and  $(\ker f)_i=0$ for every $i<a+1$. 
Repeating this argument, we obtain a graded projective resolution $(\cdots\to F^2\to F^1\to F^0\to0)$ of $N$ such that $F^p_i=0$ for every $i<a+p$.

Let $p\ge0$ and $q\in\Z$ be integers with $p+q\ge b-a$.
For $i\in\Z$, if $i<a+p$, then $F^p_i=0$ by the previous paragraph; otherwise, one has $i+q\ge i+(b-a-p)\ge b$ and hence $L_{i+q}=0$ by assumption.
This means that $\Hom_A(F^p,L)_q=\gHom_A(F^p,L(q))=0$, and thus $\Ext_A^p(N,L)_q=0$.
\end{proof}

The proposition to be proved next, Proposition \ref{kernel cokernel}, will play an important role in the proof of the main result. To state it, we first recall the notion of a thick subcategory.

\begin{dfn}
For a left Noetherian ring $\L$, which is not necessarily graded, we denote by $\mod\L$ the category of finitely generated left $\L$-modules.
Let $\A$ be an abelian category.
We say that a subcategory $\X$ of $\A$ is {\em thick} if it satisfies the following conditions:
\begin{enumerate}[{\rm (1)}]
\item
Let $A$ be an object in $\A$ and $B$ a direct summand of $A$.
If $A$ belongs to $\X$, then $B$ also belongs to $\X$.
\item
Let $0\to A\to B\to C\to0$ be a short exact sequence in $\A$.
If two of the three objects $A,B,C$ belong to $\X$, then the third object also belongs to $\X$.
\end{enumerate}
The \textit{thick closure} of a subcategory $\X$ of $\A$ is defined to be the smallest thick subcategory containing $\X$, and denoted $\thick_\A (\X)$.
\end{dfn}

By definition, a thick subcategory is closed under kernels of epimorphisms and cokernels of monomorphisms between its objects.
The following proposition shows that, for graded modules concentrated in degrees zero and one, the underlying modules of the kernels and cokernels of arbitrary degree-preserving morphisms also belong to the corresponding thick closure.

\begin{prop}\label{kernel cokernel}
Let $M,N\in\gmod A$ with $M_i=0=N_i$ for all $i\in\Z\setminus\{0,1\}$.
For every morphism $f:M\to N$ in $\gmod A$, the underlying $A$-modules of $\ker f$ and $\cok f$ belong to $\thick_{\mod A}\{A,M,N\}$.
\end{prop}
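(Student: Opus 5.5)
The approach is to realize $\ker f$ and $\cok f$ as terms in short exact sequences whose other terms are already known to lie in $\X:=\thick_{\mod A}\{A,M,N\}$. Where this fails directly, I would use the degree constraints together with Lemmas \ref{graded Ext1} and \ref{graded Ext2} to show that suitable graded sequences split, so that the desired modules appear as direct summands of objects of $\X$. Throughout I would work in $\gmod A$, where all constructions are degree-preserving, and pass to underlying modules only at the end. This is harmless: a short exact sequence in $\gmod A$ gives one in $\mod A$, and a graded direct summand is an ungraded one.

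\textbf{Step 1 (reduction to one auxiliary module).} Choose a graded projective $F$ with a surjection $\pi:F\to N$. Take $F=(A\otimes_{A_0}N_0)\oplus(A\otimes_{A_0}N_1)(-1)$, which is generated in degrees $0$ and $1$ exactly as in the proof of Lemma \ref{graded Ext2}. Then $(f,\pi):M\oplus F\to N$ is an epimorphism in $\gmod A$, so its kernel $K$ lies in $\X$, because $M\oplus F$ and $N$ do. Projecting $K\to F$ yields an exact sequence
\[
0\to\ker f\to K\to F'\to0,\qquad F':=\pi^{-1}(\image f).
\]
There is also an exact sequence $0\to F'\to F\to\cok f\to0$. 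Hence $\cok f\in\X$ if and only if $F'\in\X$. If moreover $F'\in\X$, then $\ker f\in\X$ by the two-out-of-three property. The proposition is therefore reduced to showing $F'\in\X$, or equivalently $\ker f\in\X$, since the first sequence shows these two conditions are equivalent.

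\textbf{Step 2 (using the degree bounds).} Here the hypothesis that $M$ and $N$ live in degrees $0,1$ enters. Every graded module $X$ concentrated in degrees $0,1$ has $X_1$ as a graded submodule that is semisimple and concentrated in one degree. Moreover $X/X_1$ is concentrated in degree $0$. Lemma \ref{graded Ext2} gives $\Ext^1_A(X,L)_0=0$ whenever $L$ is concentrated in degree $0$ and $X$ starts in degree $0$. The same lemma gives the vanishing when $X$ starts in degree $1$ and $L$ vanishes in degrees $\ge2$. By Lemma \ref{graded Ext1}, such extensions split. I would apply this to the exact sequence $0\to\ker f\to K\to F'\to0$, or to a variant obtained by first splitting off the degree-$1$ socle parts of $\ker f$ and of $\image f$. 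The goal is to exhibit $\ker f$, or its degree-$0$-generated part, as a graded direct summand of $K$. The pieces concentrated in a single degree would be handled separately by the same splitting argument, applied to $0\to (\ker f)_1\to\ker f\to \ker f/(\ker f)_1\to0$ and to the analogous sequence for $\image f$.

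\textbf{Main obstacle.} I expect the main difficulty to be arranging the gradings so that Lemma \ref{graded Ext2} actually applies. Applied naively to $0\to\ker f\to K\to F'\to0$, the lemma has $a=0$ and $b=2$, so it only kills $\Ext^1$ in degrees $q\ge1$, not the needed $q=0$. To fix this, I would first try replacing $f$ by its restriction to the degree-$0$-generated part and treating the degree-$1$ part of $N$ via the shifted summand $A\otimes_{A_0}N_1(-1)$ of $F$, for which $a=1$. Splitting the problem this way should make $b-a\le1$ in each sequence, so that the required graded splittings follow.
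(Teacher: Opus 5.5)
\textbf{There is a genuine gap in Step 2.} Your Step 1 matches the paper's pullback construction exactly. Your $K=\ker(f,\pi)$ is the paper's $G$, and your $F'=\pi^{-1}(\image f)$ is its $\ker(pg)$. Reducing to splitting $0\to\ker f\to K\to F'\to0$ in $\gmod A$ is also the right move.

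You correctly see that Lemma \ref{graded Ext2} applied directly to $\Ext^1_A(F',\ker f)_0$ fails, since $a=0$, $b=2$ and $p+q=1<2$. But your proposed remedy cannot work:
\begin{enumerate}
\item Splitting off degree-one parts produces pieces such as $(\ker f)_1$ or $\ker f/(\ker f)_1$, and there is no reason for these to lie in $\X$. For instance, $(\ker f)_1$ is semisimple, while the proper thick subcategories containing $\L$ in Lemma \ref{thick to abelian} contain no simple module at all.
\item The sequence $0\to(\ker f)_1\to\ker f\to\ker f/(\ker f)_1\to0$ does not split in general. Take $f=0$: then $\ker f=M$, which is typically not semisimple. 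Consistently, Lemma \ref{graded Ext2} gives nothing there, again with $a=0$, $b=2$, $p+q=1$.
\item Restricting $f$ to the degree-$0$-generated submodule $AM_0$ likewise introduces a module not known to lie in $\X$.
\end{enumerate}
The ``make $b-a\le1$'' strategy therefore has no valid way to close.

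The missing idea is a dimension shift along the second sequence you already wrote down, $0\to F'\to F\to\cok f\to0$. Since $F$ is graded projective, the long exact Ext sequence has degree-preserving connecting maps, and it gives a graded isomorphism $\Ext^1_A(F',\ker f)\cong\Ext^2_A(\cok f,\ker f)$. Now apply Lemma \ref{graded Ext2} to $\Ext^2_A(\cok f,\ker f)_0$. Here $\cok f$ lives in degrees $\ge0$, so $a\ge0$. Also $\ker f$ vanishes in degrees $\ge2$, so $b=2$. Hence $p+q=2\ge b-a$, and $\Ext^1_A(F',\ker f)_0=0$. By Lemma \ref{graded Ext1} the sequence $0\to\ker f\to K\to F'\to0$ splits, so $\ker f$ and $F'$ are direct summands of $K\in\X$. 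Then $\cok f\in\X$ follows from $0\to F'\to F\to\cok f\to0$. Raising the homological degree by one supplies exactly the extra unit in $p+q$ your degree juggling was looking for, and no special choice of $F$ is needed.
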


\begin{proof}
Let $p:N\to C$ be the natural surjection, and put $K=\ker f$ and $C=\cok f$. 
We take a surjection $g: F\to N$ in $\gmod A$, where $F$ is a graded free $A$-module.
The pullback diagram 
$$
\xymatrix{
0\ar[r]&K\ar[r]\ar@{=}[d]&
G\ar[r]\ar[d]\ar@{}[dr]|{\text{\scriptsize PB}}
&F\ar[r]^{pg}\ar@{->>}[d]^{g}&
C\ar[r]\ar@{=}[d]&0\\
0\ar[r]&K\ar[r]&
M\ar[r]^{f}&N\ar[r]^{p}&C\ar[r]&0 
}
$$
induces the exact sequences
$$
0\to G\to M\oplus F \xrightarrow{\varphi} N \to 0, \quad 0\to K\to G \to \ker(pg)\to 0
$$
in $\gmod A$, where $\varphi(x,y)=f(x)-g(y)$ for $x\in M$ and $y\in F$.
The first exact sequence implies that the underlying $A$-module of $G$ belongs to $\thick_{\mod A}\{A,M,N\}$.
Both $K$ and $C$ are concentrated in degrees zero and one. 
Lemma \ref{graded Ext2} gives $\Ext_A^1(\ker(pg),K)_0\cong\Ext_A^2(C,K)_0=0$. 
Lemma \ref{graded Ext1} shows that the exact sequence $0\to K\to G \to \ker(pg)\to 0$ splits in $\gmod A$.
Hence the direct summands $K$ and $\ker(pg)$ of $G$ belong to $\thick_{\mod A}\{A,M,N\}$, and hence so does $C$.
\end{proof}

\section{Graded syzygies and cosyzygies}

This section is devoted to collecting the properties of weakly symmetric algebras with radical cube zero that will play a central role in this paper.
The most important result prepared in this section is a lemma showing that the syzygy and cosyzygy functors induce mutually inverse autoequivalences on an abelian category associated with a fixed thick subcategory.
We first recall some terminology.

\begin{dfn}
Let $k$ be a field and let $\L$ be a finite-dimensional $k$-algebra. 
The algebra $\L$ is called \textit{split} if $\L/\rad\L\cong\prod_{i=1}^r M_{n_i}(k)$ as $k$-algebras for some positive integers $r,n_1,\ldots,n_r$. 
(Notice that $\L$ is automatically split when $k$ is algebraically closed.)
We say that $\L$ has \textit{radical cube zero} if $(\rad\L)^3=0$. 
Also, $\L$ is called \textit{self-injective} if $\!{}_{\L}\L$ is injective, and a self-injective algebra $\L$ is called \textit{weakly symmetric} if the socle and the top of every indecomposable projective left $\L$-module are isomorphic.
\end{dfn}

We first dispose of the case where the radical has square zero.

\begin{rem}\label{square zero}
Let $k$ be a field and let $\L$ be a finite-dimensional split indecomposable weakly symmetric $k$-algebra. 
If $(\rad\L)^2=0$, then the only thick subcategories of $\mod\L$ containing $\L$ are $\add\L$ and $\mod\L$. Indeed, since the assertion is invariant under Morita equivalence, we may assume that $\L$ is basic.
For every indecomposable projective module $P$, the condition $(\rad\L)^2=0$ gives $\rad P\subseteq\soc P$, while weak symmetry says that $\soc P$ is simple and isomorphic to $\operatorname{top} P$. There are no nonzero homomorphisms between indecomposable projective modules with nonisomorphic tops. Since $\L$ is indecomposable, it follows that $\L$ has only one isomorphism class of simple modules, and hence is local. It follows that it is either $k$ or $k[x]/(x^2)$.
\end{rem}

Therefore, the case of interest to us is when $(\rad\L)^2\ne0$.
To avoid repeatedly restating definitions, we fix below the notation used throughout this section.

\begin{nota}\label{notation}
Throughout the rest of this section, let $k$ be a field and let $\L$ be a finite-dimensional split indecomposable weakly symmetric $k$-algebra with radical cube zero, and assume that $J^2\ne0$, where $J=\rad\L$. Choose a semisimple subalgebra $\L_0$ of $\L$ such that $\L=\L_0\oplus J$ and $\L_0\cong\L/J$ by the Wedderburn--Malcev theorem, see \cite[Theorem 72.19]{CR} or \cite[Theorem 11.6]{P} for example.  Write $\L_0=\prod_{i=1}^r M_{n_i}(k)$, and choose a primitive idempotent $e_i\in M_{n_i}(k)$ for each $i$.
Following \cite{Ben}, we set
\begin{align*}
P_i&:=\L e_i, \quad S_i:=P_i/JP_i, \quad E_{ij}:=\dim_k e_j(J/J^2)e_i\ (=\dim_k\Ext_\L^1(S_i,S_j)),
\\
&E:=(E_{ij})_{1\le i,j\le r}, \quad
\lambda:=\max\{|\mu|\mid \text{$\mu$ is an eigenvalue of $E$}\}.
\end{align*}
Then $P_1,\ldots,P_r$ and $S_1,\ldots,S_r$ form complete sets of representatives of the isomorphism classes of indecomposable projective and simple left $\L$-modules, respectively; see \cite[Theorem 25.3]{Lam}.
The number $\lambda$ is called the \textit{spectral radius} of $E$.
\end{nota}

The following two lemmas collect some standard facts about weakly symmetric algebras with radical cube zero; see \cite{Ben, E, ESo, Lam} for example.
In view of the subtle differences in the hypotheses and the fact that the results of this paper have important applications to commutative algebra, we include the proof for the reader's convenience.

\begin{lem}\label{basic properties1}
The following statements hold.
\begin{enumerate}[{\rm (1)}]
\item For all $1\le i,j\le r$, the equality $\dim_k e_i\L e_j=\dim_k e_j\L e_i$ holds.
\item One has $J^2=\soc({}_\L\L)$. More precisely, $J^2e_i=\soc(P_i)\cong S_i$ and $\dim_k e_jJ^2e_i=\delta_{ij}$ for all $1\le i,j\le r$.
\item The matrix $E$ is symmetric.
\item There is no nonempty proper subset $I\subset\{1,\ldots,r\}$ such that $E_{ij}=0$ for every $i\in I$ and $j\notin I$.
\end{enumerate}
\end{lem}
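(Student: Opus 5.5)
I will prove the four items in the order (2), (1), (3), (4), since (1) and (3) are consequences of the structure of the socle described in (2).

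For (2), let $P_i=\L e_i$. Since $\L$ is self-injective and weakly symmetric, $\soc P_i$ is simple and isomorphic to $S_i$. Because $J^3=0$, we have $J\cdot J^2P_i=0$, so $J^2P_i\subseteq\soc P_i$. I claim $J^2P_i\ne0$ for every $i$.

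Suppose instead that $J^2e_i=0$. Then $JP_i\subseteq\soc P_i\cong S_i$, so $JP_i$ is zero or simple. It is nonzero, since otherwise $P_i$ would be simple and also injective, and then indecomposability of $\L$ together with $J^2\ne0$ gives a contradiction. Hence $JP_i=\soc P_i\cong S_i$, i.e.\ $e_i(J/J^2)e_i\ne0$ and $e_j J e_i=0$ for $j\ne i$. So no other $P_j$ maps nontrivially into $\rad P_i$. Dually, $P_i$ is injective with socle $S_i$, and a nonzero map $P_j\to P_i$ with $j\neq i$ would have image in $JP_i\cong S_i$, which is impossible since the top of $P_j$ is $S_j$. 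Thus $\Hom(P_j,P_i)=0=\Hom(P_i,P_j)$ for $j\ne i$. This gives a block decomposition of $\L$, and indecomposability forces $r=1$, so $\L$ is local. But then $JP_1=S_1$ means $J^2=0$, a contradiction.

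Therefore $J^2e_i=\soc P_i\cong S_i$, and summing over $i$ (up to Morita multiplicities) gives $J^2=\soc({}_\L\L)$. Finally, $\dim_k e_jJ^2e_i=\dim\Hom(P_j,J^2e_i)=\dim\Hom(P_j,S_i)=\delta_{ij}$, using that $\L$ is split.

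For (1), I use a nondegenerate pairing. For $x\in e_i\L e_j$ and $y\in e_j\L e_i$, the product $xy$ lies in $e_i\L e_i$; project it to $e_iJ^2e_i\cong k$, which is one-dimensional by (2). This is a bilinear form $e_i\L e_j\times e_j\L e_i\to k$, and I need it to be nondegenerate. Take $0\ne x\in e_i\L e_j$, so that $\L x$ is a nonzero submodule of $P_j$ and hence contains $\soc P_j=J^2e_j$. Then there is $y$ with $yx$ spanning $J^2e_j$, and we may take $y\in e_j\L e_i$. The resulting pairing $(x,y)\mapsto$ the component of $yx$ in $e_jJ^2e_j$ gives a nondegenerate pairing on one side. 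Doing the same argument with the roles of $i$ and $j$ swapped gives the other inequality, so $\dim_k e_i\L e_j=\dim_k e_j\L e_i$. Alternatively, the inequality $\dim e_i\L e_j\le\dim e_j\L e_i$ for all $i,j$, applied both ways, already forces equality.

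For (3), for $i\ne j$ we have $e_j\L e_i=e_jJe_i$. Since $e_jJ^2e_i=0$ by (2), this gives $E_{ij}=\dim e_j\L e_i$, and (1) yields $E_{ij}=E_{ji}$. The diagonal entries need no argument.

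For (4), suppose such an $I$ exists. By symmetry of $E$ we also have $E_{ji}=0$ for $j\notin I$, $i\in I$. Then $e_j\L e_i=0$ and $e_i\L e_j=0$ for all $i\in I$, $j\notin I$, because elements of $J^2$ live only in diagonal corners by (2). Hence $e=\sum_{i\in I}$ (sum of the full matrix-unit idempotents of the blocks indexed by $I$) is a nontrivial central idempotent, contradicting the indecomposability of $\L$.

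I expect the main obstacle to be the nonvanishing $J^2e_i\neq0$ in (2), where the indecomposability argument must be handled carefully.
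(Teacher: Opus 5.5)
Your plan for (3), (4) and for the contradiction in (2) follows the paper. However, (2) has a hole at the sentence asserting $\Hom(P_j,P_i)=0=\Hom(P_i,P_j)$. Both sentences before it, including the one beginning with \emph{Dually}, only concern maps $P_j\to P_i$, so they prove $e_j\L e_i=0$. The vanishing of $e_i\L e_j\cong\Hom_\L(P_i,P_j)$ is never argued. Without it you cannot conclude that the sum of the idempotents belonging to $P_i$ is central.

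This is exactly the step where the paper invokes (1). Your ordering forbids that, since your proof of (1) cites (2). The step is easy to repair in either of two ways.
\begin{enumerate}[{\rm (a)}]
\item Directly: if $J^2e_i=0$, then every composition factor of $P_i$ is isomorphic to $S_i$, because $P_i/JP_i\cong S_i$ and $JP_i\subseteq\soc P_i\cong S_i$. But the image of a nonzero map $P_i\to P_j$ contains $\soc P_j\cong S_j$, which is impossible for $j\ne i$.
\item Alternatively, note that your pairing argument for (1) does not actually need (2). For $i\ne j$ one has $e_i\L e_j=e_iJe_j$ and $e_j\L e_i=e_jJe_i$, so $yx\in e_jJ^2e_j\subseteq e_j\soc P_j$. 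This space is one-dimensional because $\soc P_j\cong S_j$ and $\L$ is split. Moreover, the inclusion $\L x\supseteq\soc P_j$ uses only that this socle is simple. So you can prove (1) first and then argue in (2) exactly as the paper does.
\end{enumerate}

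With that fixed, the only genuine difference from the paper is in (1). The paper identifies $P_j\cong D(e_j\L)$, since both are injective envelopes of $S_j$, and reads off $\dim_k e_i\L e_j=\dim_k D(e_j\L e_i)$. You instead build a pairing $e_i\L e_j\times e_j\L e_i\to e_j\soc P_j\cong k$, $(x,y)\mapsto yx$, show it is nondegenerate in the first variable, and apply the resulting inequality in both directions. Your version is more elementary, as it avoids duality with right modules; the paper's gives the isomorphism in one stroke. Two small points on your write-up: for $i\ne j$ no projection onto the socle corner is needed, since $yx$ already lies in $e_jJ^2e_j$, and the case $i=j$ is trivial.
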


\begin{proof}
(1) Since $\L$ is self-injective, $P_j$ is injective, and weak symmetry gives $\soc(P_j)\cong S_j$. Hence $P_j$ is the injective envelope of $S_j$.
Set $D(-)=\Hom_k(-,k)$. As $e_j\L$ is a projective right $\L$-module, $D(e_j\L)$ is an injective left $\L$-module, and $\soc D(e_j\L)\cong D(e_j\L/e_jJ)\cong S_j$.
Thus $D(e_j\L)$ is also the injective envelope of $S_j$, and hence $P_j\cong D(e_j\L)$.
So $\dim_k e_i\L e_j=\dim_k e_iD(e_j\L)=\dim_k D(e_j\L e_i)=\dim_k e_j\L e_i$.

(2) Since $J^3=0$, one has $J^2e_i\subseteq\soc(P_i)\cong S_i$ for every $i$, where the isomorphism follows from the weak symmetry of $\L$. Hence either $J^2e_i=0$ or $J^2e_i=\soc(P_i)$. 
It suffices to show that the latter holds for every $1\le i\le r$.

Suppose that $J^2e_i=0$ for some $i$. Then $Je_i\subseteq\soc(P_i)\cong S_i$, so $e_jJe_i=0$ for every $j\ne i$. 
Since $e_i$ and $e_j$ belong to distinct simple components of $\L_0$, $e_j\L e_i/e_jJe_i\cong e_jS_i=0$, and thus $e_j\L e_i=0$ for every $j\ne i$. 
By (1), $e_i\L e_j=0$ for every $j\ne i$. 
Choose a complete set of primitive orthogonal idempotents $f_1,\ldots,f_m$ in $\L_0$, and let $e$ be the sum of those $f_h$ for which $\L f_h\cong P_i$. Since primitive idempotents in the same simple component of $\L_0$ are conjugate by units of $\L_0$, the preceding vanishing implies that $f_h\L f_\ell=0=f_\ell\L f_h$ whenever $\L f_h\cong P_i$ and $\L f_\ell\not\cong P_i$. Hence $e\L(1-e)=0=(1-e)\L e$. Since $ea=eae=ae$ for every $a\in\L$, $e$ is a central idempotent, and thus indecomposability gives $e=1$. Hence every indecomposable projective module is isomorphic to $P_i$, and $J^2e_i=0$ forces $J^2=0$, a contradiction.

(3) Let $i\ne j$. Then $e_j\L e_i/e_jJe_i\cong e_jS_i=0$, and thus $e_j\L e_i=e_jJe_i$. By (2), $J^2e_i\cong S_i$, so $e_jJ^2e_i=0$. Therefore we get
$$
E_{ij}=\dim_k e_j(J/J^2)e_i=\dim_k e_jJe_i=\dim_k e_j\L e_i.
$$
Similarly, $E_{ji}=\dim_k e_i\L e_j$. The assertion (1) gives $E_{ij}=E_{ji}$.

(4) Suppose that there is a nonempty proper subset $I\subset\{1,\ldots,r\}$ such that $E_{ij}=0$ for every $i\in I$ and $j\notin I$. Fix $i\in I$ and $j\notin I$. By (2), one has $e_jJ^2e_i\cong e_jS_i=0$. It follows that
$$
e_jJe_i=e_jJe_i/e_jJ^2e_i\cong e_j(J/J^2)e_i=E_{ij}=0, \ \text{and thus}\ 
e_j\L e_i=e_j\L e_i/e_jJ e_i\cong e_jS_i=0
$$
since $i\ne j$.
By (3), $E_{ji}=0$, and the same argument gives $e_i\L e_j=0$. 
Choose a complete set of primitive orthogonal idempotents $f_1,\ldots,f_m$ in $\L_0$, and let $e$ be the sum of those $f_h$ for which $\L f_h\cong P_i$ for some $i\in I$. By the same reasoning as in the proof of (2), the vanishing of  $e_i\L e_j$ and $e_j\L e_i$ shows that
$$
e\L(1-e)=0=(1-e)\L e.
$$
Hence $ea=eae=ae$ for every $a\in\L$, so $e$ is a central idempotent.
Since $I$ is nonempty and proper, $e\ne0,1$, contradicting the
indecomposability of $\L$.
\end{proof}

\begin{lem}\label{basic properties2}
Let $M\in\mod\L$. The following statements hold.
\begin{enumerate}[{\rm (1)}]
\item There exists a grading $\L=\L_0\oplus\L_1\oplus\L_2$ such that $J=\L_1\oplus\L_2$, $\L_2=J^2$, and $\dim_k e_i\L_1e_j=E_{ij}$ for all $1\le i,j\le r$.
In particular, for all $1\le i\le r$, $P_i$ admits a grading $P_i=\L_0 e_i\oplus\L_1 e_i\oplus\L_2 e_i$ as a graded left $\L$-module.
\item Assume that $J^2M=0$.
With respect to any such grading of $\L$ as in {\rm (1)}, $M$ admits a grading $M=M_0\oplus M_1$ as a graded left $\L$-module such that $\rad M=M_1$.
\item If $M$ has no nonzero projective direct summand, then $J^2M=0$.
\item If $M$ satisfies $J^2M=0$ and has no simple direct summand, then $\soc M=\rad M$.
\end{enumerate}
\end{lem}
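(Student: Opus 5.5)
I plan to prove the four assertions in order, using Lemma \ref{basic properties1} throughout.

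For (1), put $\L_2:=J^2$ and construct $\L_1$ as an $\L_0$-bimodule complement of $J^2$ in $J$. Since $\L_0$ is semisimple and split, $\L_0\otimes_k\L_0^{\mathrm{op}}$ is semisimple. Hence $J^2\subseteq J$ has an $\L_0$-bimodule complement $\L_1$, and then $e_i\L_1e_j\cong e_i(J/J^2)e_j$. By Lemma \ref{basic properties1}(3), $E$ is symmetric, so $\dim_k e_i\L_1e_j=E_{ji}=E_{ij}$.

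To see that $\L_0\oplus\L_1\oplus\L_2$ is a grading, I will check the multiplication rules. The relations $\L_0\L_m\subseteq\L_m$ and $\L_m\L_0\subseteq\L_m$ hold by the bimodule property. The relations $\L_1\L_1\subseteq J^2=\L_2$, $\L_1\L_2\subseteq J^3=0$ and $\L_2\L_2=0$ are immediate. Since $e_i\in\L_0$ is homogeneous, each $P_i=\L e_i$ inherits the grading $\L_0e_i\oplus\L_1e_i\oplus\L_2e_i$.

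For (2), assume $J^2M=0$. I choose an $\L_0$-complement $M_0$ of $JM$ in $M$, possible since $\L_0$ is semisimple, and set $M_1:=JM$. Then
\[
M=\L M_0=\L_0M_0+\L_1M_0+\L_2M_0 .
\]
The first summand is $M_0$, the last vanishes because $\L_2M_0\subseteq J^2M=0$, and so $JM=\L_1M_0$ (which also contains $\L_1 JM$ and $\L_2 JM$, both zero). The degree relations then follow: $\L_1M_1\subseteq J^2M=0$ and $\L_2M_m=0$. This is a grading with $\rad M=JM=M_1$.

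For (3), I argue by contrapositive: if $J^2M\ne0$, then $M$ has a projective summand. Pick $x\in M$ with $J^2x\ne0$. By Lemma \ref{basic properties1}(2), $J^2=\sum_i J^2e_i$, so replacing $x$ by a suitable $e_ix$ (using idempotents of $\L_0$ and the decomposition of $1$), I may assume $J^2e_ix\ne0$ with $x=e_ix$. Consider the map $P_i\to M$ given by $ae_i\mapsto ax$. Its kernel meets $\soc P_i=J^2e_i$ trivially, and $\soc P_i$ is simple, so the map is injective. Since $P_i$ is injective ($\L$ is self-injective), it splits off as a summand of $M$.

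The main care needed in (3) is this reduction from some $x$ to some $e_ix$. I would handle it by writing $J^2=J^2\cdot 1=\sum_h J^2 f_h$ over primitive orthogonal idempotents $f_h$ in $\L_0$ summing to $1$. Then $J^2x\ne0$ gives some $h$ with $J^2f_hx\ne0$, and $\L f_h\cong P_i$ for some $i$.

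For (4), assume $J^2M=0$ and that $M$ has no simple summand. Then $JM\subseteq\soc M$ automatically, so the task is the reverse inclusion $\soc M\subseteq JM$. Suppose $s\in\soc M\setminus JM$, chosen with $\L s$ simple. Then $\L s\cap JM=0$, and $\L s$ maps isomorphically into $M/JM$. Because $M/JM$ is semisimple, I can choose a complement $\bar W$ to the image of $\L s$ there, and take $W$ to be its preimage in $M$. Then $W\supseteq JM$, and $M=\L s\oplus W$: the sum is all of $M$ by construction, and $\L s\cap W=0$ because the image of $\L s$ meets $\bar W$ trivially and $\L s\cap JM=0$. So $\L s$ is a simple direct summand of $M$, a contradiction.
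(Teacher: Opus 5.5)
Your proposal is correct and follows essentially the same route as the paper in all four parts. The paper likewise takes an $\L_0$-bimodule complement of $J^2$ in $J$ (using semisimplicity of $\L_0\otimes_k\L_0^{\rm op}$) for (1) and an $\L_0$-complement of $\rad M$ for (2). For (3) it embeds $\L e$ into $M$ via its simple essential socle and splits off using injectivity, and for (4) it splits off a simple submodule not contained in $\rad M$ through the semisimple top $M/\rad M$. Your version differs only in writing out the complement $W$ explicitly.
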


\begin{proof}
(1) Since $\L_0$ is split semisimple, $\L_0\otimes_k\L_0^{\rm op}$ is semisimple. Hence the exact sequence
$$
0\to J^2\to J\to J/J^2\to0
$$
splits as a sequence of $\L_0$-bimodules. Choose an $\L_0$-subbimodule $\L_1$ of $J$ such that $J=\L_1\oplus J^2$, and set $\L_2=J^2$. Then $\L=\L_0\oplus\L_1\oplus\L_2$.
Moreover, $\L_1\L_1\subseteq J^2=\L_2$, while $\L_1\L_2=\L_2\L_1=\L_2^2=0$ as $J^3=0$. This decomposition defines a grading of $\L$. Finally, we have $\dim_ke_i\L_1e_j=\dim_ke_i(J/J^2)e_j=E_{ji}=E_{ij}$ by Lemma \ref{basic properties1}(3).

(2) Since $\L_0$ is semisimple, we can choose a left $\L_0$-submodule $M_0$ of $M$ complementary to $\rad M$, and set $M_1=\rad M$. Then $M=M_0\oplus M_1$ as a left $\L_0$-module. Since $J^2M=0$, one has $\L_1M_0\subseteq\rad M=M_1$, $\L_1M_1=0$, and $\L_2M=0$. Hence $M=M_0\oplus M_1$ is a graded left $\L$-module.

(3) Suppose that $J^2M\ne0$. Choose $x\in M$ and a primitive idempotent $e$ in a complete primitive decomposition of $1$ in $\L_0$ such that $J^2ex\ne0$, and set $y=ex$ and $P=\L e$. The homomorphism $f:P\to M$ given by $ae\mapsto ay$ is nonzero on the simple socle $J^2e=\soc(P)$; see Lemma \ref{basic properties1}(2). Since this socle is essential in the indecomposable injective module $P$, one has $\ker f=0$. As $P$ is injective, $f$ splits, and hence $M$ has a nonzero projective direct summand.

(4) Assume that $J^2M=0$. Then $\rad M=JM\subseteq\soc M$. If the inclusion is strict, then there is a simple submodule $S$ of $\soc M$ not contained in $\rad M$. 
The composition $S\to M\to M/\rad M$ of the natural morphisms is nonzero.
As $S$ is simple, this composition is a monomorphism of $\L/J$-modules and hence splits. 
Therefore $S\to M$ also splits.
\end{proof}

\begin{rem}\label{unique grading}
The gradings in Lemma \ref{basic properties2}(1) and (2) depend on the choice of complements, but are unique up to graded isomorphism. Indeed, suppose that
$$
\L=\L_0\oplus\L_1\oplus J^2 \quad\text{and}\quad \L=\L_0\oplus\L'_1\oplus J^2
$$
are two such gradings of $\L$. 
The natural maps $\L_1\to J/J^2$ and $\L'_1\to J/J^2$ are isomorphisms of $\L_0$-bimodules, and hence induce an $\L_0$-bimodule isomorphism $\theta:\L_1\to\L'_1$ such that $\theta(x)-x\in J^2$ for every $x\in\L_1$.  Extending $\theta$ by the identity on $\L_0\oplus J^2$ gives a graded algebra isomorphism between the two gradings: for $x,y\in\L_1$, one has $\theta(x)\theta(y)=xy$ since $J^3=0$.
Similarly, in Lemma \ref{basic properties2}(2), once a grading of $\L$ is fixed, any two choices of $\L_0$-module complements of $\rad M$ in $M$ are isomorphic via the identifications with $M/\rad M$, and this isomorphism extends to an isomorphism in $\gmod\L$ between the corresponding gradings of $M$ since $J^2M=0$.
\end{rem}

To exploit the self-injectivity of $\L$ in the graded setting, we first recall the graded $k$-dual.

\begin{dfn}
Fix a grading of $\L$ as in Lemma \ref{basic properties2}(1).
For a finitely generated graded left (resp.\! right) $\L$-module $N$, its \textit{graded $k$-dual} is
$$
D(N)=\Hom_k(N,k),
\quad
D(N)_i=\Hom_k(N_{-i},k).
$$
We regard $D(N)$ as a graded right (resp.\! left) $\L$-module via $(\varphi a)(x)=\varphi(ax)$ (\text{resp.} $(a\varphi)(x)=\varphi(xa)$) for $a\in\L$, $\varphi\in D(N)$, and $x\in N$. 
In particular, the graded $k$-dual gives a duality between finitely generated graded left and right $\L$-modules.
\end{dfn}

The next lemma records the graded duality needed for cosyzygies.

\begin{lem}\label{Matlis duality}
Fix a grading of $\L$ as in Lemma \ref{basic properties2}(1), and let $D(-)=\Hom_k(-,k)$ be the graded $k$-dual. Then $D(\L_\L)\cong {}_\L\L(2)$ and $D({}_\L\L)\cong \L_\L(2)$ as graded left and graded right $\L$-modules, respectively.
\end{lem}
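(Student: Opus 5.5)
We need $D(\L_\L)\cong {}_\L\L(2)$ as graded left modules. Note that $D(\L_\L)$ lives in degrees $0,-1,-2$, while ${}_\L\L(2)$ has $\L(2)_n=\L_{n+2}$, which also lives in degrees $-2,-1,0$. The plan is to construct an explicit homogeneous nondegenerate linear form and use it to define the isomorphism.

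\emph{Step 1: build the form.} By Lemma \ref{basic properties1}(2), $\L_2=J^2=\bigoplus_i J^2e_i$ with $J^2e_i\cong S_i$, and $\dim_k e_jJ^2e_i=\delta_{ij}$. Choose a $k$-linear form $t:\L\to k$ that vanishes on $\L_0\oplus\L_1$ and is nonzero on each $e_iJ^2e_i$. Here the $e_i$ range over a complete set of primitive orthogonal idempotents in $\L_0$; by conjugation it suffices that $t$ be nonzero on each $f_hJ^2f_h$. Then $t$ is homogeneous of degree $2$ in the sense that it lies in $D(\L)_{-2}$.

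\emph{Step 2: define the map and check linearity and degrees.} Define $\Phi:\L\to D(\L_\L)$ by $\Phi(a)(x)=t(xa)$. With the left module structure $(b\varphi)(x)=\varphi(xb)$, we get $\Phi(ba)(x)=t(xba)=(b\Phi(a))(x)$, so $\Phi$ is left $\L$-linear. For the grading, take $a\in\L_m$. Then $\Phi(a)$ is nonzero only on $\L_{2-m}$, so $\Phi(a)\in D(\L)_{m-2}=({}_\L\L(2)$-degree of $a)$. Hence $\Phi:{}_\L\L(2)\to D(\L_\L)$ is a morphism in $\gmod\L$.

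\emph{Step 3: injectivity, which is the main point.} Since both sides have dimension $\dim_k\L$, it suffices to show $\Phi$ is injective, that is, that the bilinear form $(x,a)\mapsto t(xa)$ has no left-kernel vectors $a$. The kernel of $\Phi$ is a left ideal with $t(\L\ker\Phi)=0$. Suppose it is nonzero. Then it contains a simple left submodule, so it meets $\soc({}_\L\L)=J^2$. By Lemma \ref{basic properties1}(2) that socle is $\bigoplus_h J^2f_h$, and each $J^2f_h$ is simple, with $J^2f_h=f_{h'}J^2f_h$ for the unique $h'$ giving the same isotypic component. For a simple submodule $0\ne s\in\ker\Phi\cap J^2$, pick $h$ with $f_hs\ne0$; a general socle element is a sum $s=\sum_h s_h$. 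We then need some $x$ with $t(xs)\ne0$. Multiplying by suitable idempotents, and using that $\ker\Phi$ is a left ideal and that $\L_0$-conjugation moves between $f_h$ in one block, we reduce to an element of $f_hJ^2f_h\setminus 0$, on which $t$ is nonzero. This contradicts $t(\L\ker\Phi)=0$. The care needed here is in choosing $t$ compatibly on conjugate idempotents; alternatively, one can assume $\L$ basic by Morita invariance, which makes the reduction immediate.

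\emph{Step 4: the right-module statement.} The claim $D({}_\L\L)\cong\L_\L(2)$ follows symmetrically using $a\mapsto t(a\,\cdot)$, or by applying $D$ to the first isomorphism together with graded duality.
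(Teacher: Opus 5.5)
Your Steps 2 and 4 are correct. Step 3 is also a complete argument when $\L$ is basic: then $e_iJ^2=e_iJ^2e_i$ is one-dimensional, so left-multiplying a nonzero socle element of $\ker\Phi$ by a suitable $e_i$ lands in a space on which $t\ne0$.

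The gap is the non-basic case. The lemma must cover it, since the Notation allows $\L_0=\prod M_{n_i}(k)$ with $n_i>1$. There your claim $J^2f_h=f_{h'}J^2f_h$ is false. Indeed $J^2f_h\cong S_i$ splits as $\bigoplus_{h'}f_{h'}J^2f_h$ over all $n_i$ primitive idempotents $f_{h'}$ of the $i$th block. So passing from $f_hs\in f_hJ^2$ into $f_hJ^2f_h$ needs right multiplication, which does not preserve the left ideal $\ker\Phi$.

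Moreover, the condition in Step 1 genuinely does not suffice. Take $\L=M_2(k[x]/(x^3))$ with $\L_m=M_2(k)x^m$, and put $t(Bx^2)=\operatorname{tr}(TB)$ with $T=\begin{pmatrix}1&1\\1&1\end{pmatrix}$. For the diagonal matrix units $u_{11},u_{22}$ we get $t(u_{11}x^2)=t(u_{22}x^2)=1$. But $B=\begin{pmatrix}1&-1\\0&0\end{pmatrix}$ satisfies $BT=0$. Hence $t\bigl((C_0+C_1x+C_2x^2)Bx^2\bigr)=\operatorname{tr}(BTC_0)=0$ for all $C_0,C_1,C_2\in M_2(k)$, that is, $0\ne Bx^2\in\ker\Phi$.

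So everything hinges on the compatible choice of $t$, which you leave open. Your Morita fallback can be made to work, but it is not immediate for a graded statement. You would have to:
\begin{enumerate}
\item reduce, via degree-preserving conjugation by units of $\L_0$, to showing $D(e_i\L)\cong\L e_i(2)$ for each $i$;
\item transport this along the graded equivalence $M\mapsto eM$ for the full degree-zero idempotent $e=e_1+\cdots+e_r$, checking that $eD(e_i\L)\cong D(e_i\L e)$;
\item check that $e\L e$ with the induced grading still satisfies all the hypotheses.
\end{enumerate}

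The paper sidesteps this entirely. It starts from a Frobenius form $\psi$, which exists because $\L$ is weakly symmetric and hence Frobenius, and keeps only its degree-two component $\varphi$. Every nonzero left ideal meets $\soc({}_\L\L)=\L_2$, where $\varphi=\psi$, so $\ker\varphi$ contains no nonzero left ideal. This $\varphi$ is exactly the good choice of $t$ you need. Your Step 2 is then an explicit version of the cited graded Frobenius lemma.

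Alternatively, you can fix $t$ by hand. Choose matrix units $u^{(i)}_{pq}$ of $M_{n_i}(k)$ with $u^{(i)}_{11}=e_i$, pick $0\ne z_i\in e_iJ^2e_i$, and set $t\bigl(u^{(i)}_{p1}z_iu^{(i)}_{1q}\bigr)=\delta_{pq}$, with $t=0$ on $\L_0\oplus\L_1$. Then for any nonzero $s\in J^2$, left multiplication by a suitable $u^{(i)}_{qp}$ produces an element on which $t$ is nonzero.
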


\begin{proof}
By weak symmetry and \cite[(3.15) and the discussion following Corollary 16.64]{Lam2}, $\L$ is Frobenius and there is a $k$-linear map $\psi:\L\to k$ whose kernel contains no nonzero left ideal. 
Define $\varphi:\L\to k$ by $\varphi|_{\L_2}=\psi|_{\L_2}$ and
$\varphi|_{\L_i}=0$ for $i\ne2$.
If a nonzero left ideal $I$ were contained in $\ker\varphi$, then the nonzero  left ideal $I\cap\soc({}_\L\L)\subseteq\L_2$ is contained in $\ker\psi$, a contradiction.
Thus $\L$ is $2$-Frobenius in the sense of \cite[Definition 5.3]{BT}, and \cite[Lemma 5.5]{BT} gives the assertion. 
\end{proof}

We consider an abelian subcategory of the category of graded modules that reflects the structure of the thick subcategory.

\begin{lem}\label{thick to abelian}
Let $\X$ be a thick subcategory of $\mod\L$ containing $\L$.
The following statements hold.
\begin{enumerate}[{\rm (1)}]
\item If $\X$ contains a simple $\L$-module, then $\X=\mod\L$.
\item Fix a grading of $\L$ as in Lemma \ref{basic properties2}(1). The following hold for the subcategory defined as follows:
$$
\A_\X=\{M\in\gmod\L\mid M_i=0\text{ for }i\notin\{0,1\}\text{ and the underlying $\L$-module of $M$ belongs to $\X$}\}.
$$
\begin{enumerate}[{\rm (i)}]
\item $\A_\X$ is an abelian category of finite length and is closed under extensions in $\gmod\L$.
\item If $\X$ contains a nonprojective module, then $\A_\X$ contains a nonzero object.
\item If $\X\ne\mod\L$ and $0\ne M\in\A_\X$, then one has $M_0\ne0$ and $M_1=\rad M=\soc M=JM_0\ne0$, and the underlying $\L$-module of $M$ has no simple direct summand.
\end{enumerate}
\end{enumerate}
\end{lem}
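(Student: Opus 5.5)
For (1), I will use the fact that a thick subcategory containing $\L$ and a simple module $S_i$ contains every simple module, hence every finite-length module, so equals $\mod\L$. From $S_i$ one gets $\Omega S_i=Je_i\cong\rad P_i$ via $0\to\Omega S_i\to P_i\to S_i\to0$. By Lemma \ref{basic properties1}(2), $J^2e_i\cong S_i$, and $Je_i/J^2e_i\cong\bigoplus_j S_j^{E_{ji}}$, which gives the exact sequence $0\to S_i\to Je_i\to\bigoplus_j S_j^{E_{ji}}\to0$. Since $S_i$ and $Je_i$ lie in $\X$, so does the cokernel, and closure under summands puts every $S_j$ with $E_{ji}\neq0$ in $\X$. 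Lemma \ref{basic properties1}(4) says the graph with adjacency $E$ is connected, so propagation along neighbours reaches all $S_j$. Every module is an iterated extension of simples, so $\X=\mod\L$.

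For (2)(i), closure under extensions in $\gmod\L$ is immediate: an extension of modules concentrated in degrees $0,1$ is concentrated there, and its underlying module is an extension in $\mod\L$. For the abelian structure, take $f\colon M\to N$ in $\A_\X$. Its graded kernel and cokernel are concentrated in degrees $0,1$, and Proposition \ref{kernel cokernel} gives that their underlying modules lie in $\thick\{\L,M,N\}\subseteq\X$. So $\A_\X$ is closed under kernels and cokernels in the abelian category $\gmod\L$, and hence is abelian. Finite length holds because objects are finite-dimensional.

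For (ii), take a nonprojective $X\in\X$ and remove its projective summands, which stay in $\X$ since $\L\in\X$ and $\X$ is closed under summands. Lemma \ref{basic properties2}(3) then gives $J^2X=0$, and Lemma \ref{basic properties2}(2) grades $X$ in degrees $0,1$. The result is a nonzero object of $\A_\X$.

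For (iii), first note that if the underlying module of $M\in\A_\X$ had a simple direct summand, that summand would lie in $\X$, and (1) would give $\X=\mod\L$. Next consider the graded pieces. $M_1$ is a graded submodule, since $\L_{\ge1}M_1=0$. The inclusion $M_1\to M$ is a morphism in $\A_\X$ provided $M_1$ lies in $\A_\X$, which is not yet known. Instead, view the canonical map $M(1)\to M$, or more simply use $\ker$ of multiplication. The cleaner route is to take the morphism $M\to M$ given by the degree projection. That map is not $\L$-linear, so I will avoid it.

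The working argument goes as follows. $M_1$ is semisimple; if $M_1\neq0$ it is concentrated in degree $1$, and $M_1\hookrightarrow M$ is a graded morphism. Its kernel and cokernel under Proposition \ref{kernel cokernel} need $M_1\in\X$, so I will not route through $M_1$ directly. Instead, write $JM_0\subseteq M_1$, a graded submodule, and $M\to M/JM_0$. The quotient $M/JM_0$ has the form $M_0\oplus(M_1/JM_0)$, with both pieces semisimple direct summands. If either were nonzero, a simple summand would split off $M$ as a module, which is impossible. So the quotient is $0$; but then $M_0=0$, and $M_1$ is semisimple and nonzero, a contradiction unless $M=0$. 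Hence $M_1=JM_0$. By the same reasoning $M_0\neq0$ whenever $M\neq0$. Since $J^2M=0$, we get $\rad M=JM=JM_0=M_1$, and the absence of simple summands with Lemma \ref{basic properties2}(4) gives $\soc M=\rad M$. Finally $M_1\neq0$, because otherwise $M=M_0$ would be semisimple.

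The main obstacle is the bookkeeping in (iii), and in particular ensuring that "$M_1\ne JM_0$" genuinely produces a simple direct summand of the underlying module. The plan is to split $M_1=JM_0\oplus T$ as $\L_0$-modules and observe that $T$ is then an $\L$-direct summand because $\L_{\ge1}T=0$ and $\L_{\ge1}M\subseteq JM_0$.
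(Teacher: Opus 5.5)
Your arguments for (1), (2)(i) and (2)(ii) are the same as the paper's. (You write $E_{ji}$ rather than $E_{ij}$ for the multiplicity of $S_j$ in $Je_i/J^2e_i$; this is harmless because $E$ is symmetric.)

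In (2)(iii), however, the paragraph you call the working argument contains a false step. From $M/JM_0\cong M_0\oplus(M_1/JM_0)$ you claim that if \emph{either} piece is nonzero, a simple summand splits off $M$. For the $M_0$-piece this is wrong. $M/JM_0=M/\rad M$ is the top of $M$, and every nonzero module has nonzero top whether or not it has simple summands; for example, $P_i/J^2P_i$ is indecomposable and not simple. Taken literally, your chain (top $=0$, so $M_0=0$, so $M=JM_0=0$) just contradicts $M\ne0$. It does not establish $M_1=JM_0$. For the same reason, ``$M_0\ne0$ by the same reasoning'' has no valid support.

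What actually proves (iii) is your last paragraph, and that argument is correct. Let $T$ be an $\L_0$-complement of $JM_0$ in $M_1$. Then $\L_{\ge1}M=JM_0$ and $\L_{\ge1}T=0$, so $M=\L M_0\oplus T$ as $\L$-modules, and $T=0$ because $M$ has no simple summands. Once $M_1=JM_0$ is known, $M_0\ne0$ is immediate: otherwise $M=M_1=JM_0=0$. The rest of your chain then goes through: $\rad M=JM_0=M_1$, Lemma \ref{basic properties2}(4) gives $\soc M=\rad M$, and $M_1\ne0$ since otherwise $M$ would be semisimple. So delete the middle paragraph and keep the final one.

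For comparison, the paper avoids the splitting altogether. Since $M$ lives in degrees $0,1$, one has $\rad M=JM_0\subseteq M_1\subseteq\soc M$, and Lemma \ref{basic properties2}(4) collapses this chain in one step. Your complement argument is essentially a graded re-proof of the relevant part of that lemma.
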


\begin{proof}
(1) Suppose that $I=\{i\mid S_i\in\X\}\subseteq\{1,\ldots,r\}$ is nonempty. If $i\in I$, then the exact sequences
$$
0\to JP_i\to P_i\to S_i\to0,
\quad
0\to J^2P_i\to JP_i\to JP_i/J^2P_i\to0
$$
show that $JP_i/J^2P_i\in\X$, since $J^2P_i\cong S_i$ by Lemma \ref{basic properties1}(2). As
$$
JP_i/J^2P_i\cong\bigoplus_{j=1}^r S_j^{\oplus E_{ij}}, 
$$ 
it follows that $S_j\in\X$ whenever $E_{ij}>0$.
Thus $I$ satisfies the condition that $E_{ij}=0$ for every $i\in I$ and $j\notin I$.
Lemma \ref{basic properties1}(4) forces $I=\{1,\ldots,r\}$.
Since every finitely generated $\L$-module has finite length, the two-out-of-three property then gives $\X=\mod\L$.

(2)(i) Keeping track of the range of degrees, the fact that $\X$ is thick implies that $\A_\X$ is also a thick subcategory of $\gmod\L$.
Let $f:M\to N$ be a morphism in $\A_\X$.
Proposition \ref{kernel cokernel} shows that the underlying $\L$-modules of $\ker f$ and $\cok f$ belong to $\thick_{\mod\L}\{\L,M,N\}\subseteq\X$.
Since both are concentrated in degrees zero and one, they belong to $\A_\X$.
Thus $\A_\X$ is abelian. 
Since every object of $\A_\X$ is finite-dimensional over $k$, every ascending or descending chain of subobjects terminates. Hence $\A_\X$ is of finite length.

(ii) If $\X$ contains a nonprojective module $X$, choose a nonzero direct summand $M$ of $X$ having no nonzero projective direct summand.
Lemma \ref{basic properties2}(2)--(3) gives $J^2M=0$ and a grading $M=M_0\oplus M_1$, so $0\ne M\in\A_\X$.

(iii) Suppose first that the underlying $\L$-module of $M$ has a simple direct summand $S$. Since $\X$ is closed under direct summands, one has $S\in\X$, contradicting (1). Hence $M$ has no simple direct summand.
Since $M$ is concentrated in degrees zero and one, one has $J^2M=0$ and $\rad M=JM_0\subseteq M_1\subseteq\soc M$.
Lemma \ref{basic properties2}(4) gives $M_1=\rad M=\soc M=JM_0$, and thus one has $M_0\ne0$ as $M\ne 0$.
If $\rad M=M_1=0$, then $M$ is a nonzero semisimple module, and hence has a simple direct summand, a contradiction.
\end{proof}

We now define the two operations that will be used throughout the rest of the paper. These correspond respectively to the operations of taking the minimal syzygy and cosyzygy as graded modules.
The notation for the invariant of a graded module defined below is motivated by the \textit{dimension vector}; see \cite{E}. In fact, when the module in Lemma \ref{basic properties2}(4) is endowed with a grading as in (2), the two notions coincide.

\begin{dfn}
For a graded module $M=M_0\oplus M_1$ concentrated in degrees zero and one, set
$$
\bft(M)=(\dim_ke_1M_0,\ldots,\dim_ke_rM_0)^{\T},\quad
\bfs(M)=(\dim_ke_1M_1,\ldots,\dim_ke_rM_1)^{\T}\in\Z_{\ge0}^r,
$$
and write $\dimvec M=(\bft(M)\mid\bfs(M))$. 
For $\bfu=(u_1,\ldots,u_r)^{\T}\in\Z_{\ge0}^r$, set $P(\bfu)=\bigoplus_{i=1}^rP_i^{\oplus u_i}$.
\end{dfn}

The numerical syzygy formula appearing in the next lemma is essentially the formula in \cite[Lemma 2.2]{E}, under the identification of dimension vectors described above. The assertion needed in this paper goes beyond this numerical calculation: after normalizing syzygies and cosyzygies by grading shifts, we realize them as mutually inverse exact autoequivalences of the abelian category $\A_\X$.

\begin{lem}\label{syzygy functors}
Let $\X$ be a proper thick subcategory of $\mod\L$ containing $\L$, fix a grading of $\L$ as in Lemma \ref{basic properties2}(1), and let $\A_\X$ be as in Lemma \ref{thick to abelian}(2).
Then there exist mutually inverse exact functors $\Omega, \Omega^{-1}:\A_\X\to\A_\X$ satisfying the following:
for $M=M_0\oplus M_1\in\A_\X$, there are exact sequences
$$
0\to (\Omega M)(-1)\to P(\bft)\to M\to0,
\quad
0\to M\to P(\bfs)(1)\to (\Omega^{-1}M)(1)\to0
$$
in $\gmod\L$. Moreover, for $\bft=\bft(M)$ and $\bfs=\bfs(M)$, one has
$$
\bft(\Omega M)=E\bft-\bfs,\quad \bfs(\Omega M)=\bft,\quad \bft(\Omega^{-1}M)=\bfs,
\quad \bfs(\Omega^{-1}M)=E\bfs-\bft.
$$
In particular, $M=0$ if and only if $\Omega M=0$ if and only if $\Omega^{-1}M=0$.
\end{lem}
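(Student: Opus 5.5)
Given $M=M_0\oplus M_1\in\A_\X$ with $\bft=\bft(M)$ and $\bfs=\bfs(M)$, I would first build the graded projective cover. By Lemma \ref{thick to abelian}(2)(iii), $M_0\ne0$ (if $M\neq 0$) and $M_1=JM_0=\rad M$, so $M$ is generated in degree $0$ and $M/\rad M\cong M_0$ as $\L_0$-modules. This gives a graded projective cover $\pi:P(\bft)\to M$ that is bijective in degree $0$. Its kernel $K$ therefore lives in degrees $1,2$, and its dimension vectors can be read off from $P_i=\L_0e_i\oplus\L_1e_i\oplus\L_2e_i$ together with Lemma \ref{basic properties1}(2) and Lemma \ref{basic properties2}(1). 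We have $\dim_k e_j\L_1 e_i=E_{ij}$, so the degree-$1$ part of $P(\bft)$ has dimension vector $E\bft$, while the degree-$2$ part has dimension vector $\bft$. Since $\pi$ is surjective in degree $1$, $K_1$ has dimension vector $E\bft-\bfs$. Since $M_2=0$, $K_2=P(\bft)_2$ has dimension vector $\bft$. Setting $\Omega M:=K(1)$, which is concentrated in degrees $0,1$, yields the first exact sequence and the formulas $\bft(\Omega M)=E\bft-\bfs$ and $\bfs(\Omega M)=\bft$. The underlying module of $\Omega M$ lies in $\X$ because $\X\ni\L,M$ is closed under kernels of epimorphisms, so $\Omega M\in\A_\X$.

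Dually, I would construct $\Omega^{-1}$ by applying the graded $k$-dual and Lemma \ref{Matlis duality}. The module $D(M)$ is a graded right module in degrees $-1,0$. Taking its graded projective cover (the right-module analogue of the first step; here one needs that $D(M)$ is generated by $D(M_1)$, which follows from $\soc M=M_1$) and dualizing back using $D(\L_\L)\cong{}_\L\L(2)$ produces an injective hull $M\hookrightarrow P(\bfs)(1)$ that is bijective on socles in degree $1$. The cokernel is concentrated in degrees $0,1$ after the shift by $-1$. A dimension count analogous to the one above gives $\bft(\Omega^{-1}M)=\bfs$ and $\bfs(\Omega^{-1}M)=E\bfs-\bft$. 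As before, closure under cokernels of monomorphisms puts the underlying module in $\X$.

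For functoriality, I would lift a graded morphism $f:M\to N$ to the projective covers in $\gmod\L$; the restriction to kernels is well defined because any two lifts differ by a map $P(\bft)\to \Omega N(-1)$. I expect this to be the main obstacle, since one must show that such differences induce zero on $\Omega M$. The degree argument should handle it: a difference of lifts is a map $h:P(\bft(M))\to P(\bft(N))$ with image in $K_N$, which lies in degrees $\ge1$. Because $P(\bft(M))$ is generated in degree $0$, $h$ must vanish, so the lift is unique. This gives a genuine functor, and the same reasoning applies to injective hulls, making $\Omega^{-1}$ a functor. For exactness, given a short exact sequence in $\A_\X$, the horseshoe lemma applied to graded covers generated in degree $0$ gives an exact sequence of kernels. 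Note that $P(\bft)$ is additive in $\bft$ and the cover of the middle term can be taken to be the direct sum of the outer covers.

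Finally, the two functors are mutually inverse. For this I would observe that in $0\to\Omega M(-1)\to P(\bft)\to M\to0$ the inclusion is an injective hull, because its socle $P(\bft)_2$ lies in $\Omega M(-1)$. Uniqueness of injective hulls then gives $\Omega^{-1}\Omega M\cong M$ naturally, and the dual argument gives $\Omega\Omega^{-1}\cong\id$. For the last claim, $M=0$ clearly implies $\Omega M=0$. Conversely, if $M\ne0$ then $\bft\ne0$, so $\bfs(\Omega M)\ne0$. The case of $\Omega^{-1}$ is the same, using $\bfs(M)\ne0$ from Lemma \ref{thick to abelian}(2)(iii).
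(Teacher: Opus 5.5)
Your proposal is correct. Your construction of $\Omega$ is the paper's. The paper writes the graded projective cover canonically as $\varepsilon_M:\L\otimes_{\L_0}M_0\to M$, so the lift of $f$ is simply $\id_\L\otimes f_0$, and exactness follows from exactness of $\L\otimes_{\L_0}(-)_0$ plus the snake lemma. This carries the same content as your uniqueness-of-lifts and horseshoe arguments.

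The routes diverge in proving that $\Omega$ is an equivalence.

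\emph{The paper's route.} It builds only $\Omega$ and checks three properties:
\begin{enumerate}
\item fullness, by extending $g(-1)$ into the graded injective $P(\bft(N))$ supplied by Lemma \ref{Matlis duality} and using generation in degree zero;
\item faithfulness, by a socle argument showing that any factorization $u:M\to P(\bft(N))$ of $\id_\L\otimes f_0$ vanishes;
\item essential surjectivity, since the cokernel $L$ of the graded injective envelope $M(-1)\to P(\bfs)$ satisfies $\Omega L\cong M$.
\end{enumerate}
It then takes $\Omega^{-1}$ to be a quasi-inverse whose values are chosen to be these cokernels, so exactness of $\Omega^{-1}$ is automatic.

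\emph{Your route.} You construct $\Omega^{-1}$ directly as a functor, using graded injective envelopes obtained by dualizing projective covers of $D(M)$. You get $\Omega^{-1}\Omega\cong\id$ because the syzygy sequence is itself an injective envelope sequence, since $\soc P(\bft)=P(\bft)_2\subseteq(\Omega M)(-1)$. You get $\Omega\Omega^{-1}\cong\id$ because, dually, the cosyzygy sequence is a projective cover sequence, as $M\subseteq\rad(P(\bfs)(1))$. The degree arguments make the comparison isomorphisms unique, and that uniqueness is what gives naturality.

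\emph{What each buys.} Your version is more symmetric and describes $\Omega^{-1}$ explicitly on morphisms. It does need two extra checks.
\begin{enumerate}
\item The dual uniqueness statement for envelopes: a map from the cokernel, which lives in degrees $-1,0$, into $P(\bfs(N))(1)$ has image missing the socle in degree $1$, so it is zero.
\item Exactness of $\Omega^{-1}$, which you leave implicit. It follows either from the dual horseshoe argument or from $\Omega^{-1}$ being inverse to the exact equivalence $\Omega$.
\end{enumerate}
The paper's version avoids functoriality of envelopes altogether, at the price of a separate faithfulness argument.
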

\begin{proof}
For $M=M_0\oplus M_1\in\A_\X$, write $\bft=(t_1,\ldots,t_r)^{\T}=\bft(M)$ and $\bfs=(s_1,\ldots,s_r)^{\T}=\bfs(M)$.

We first construct $\Omega$. By Lemma \ref{thick to abelian}(2)(iii), one has $M_1=\rad M=JM_0$ when $M\ne0$, and the same equality is clear when $M=0$. Hence the natural morphism
$$
\varepsilon_M:\L\otimes_{\L_0}M_0\to M,
\quad
a\otimes x\mapsto ax,
$$
in $\gmod\L$ is surjective. 
Set $\Omega M=(\ker\varepsilon_M)(1)$.
Since $M_0\cong\bigoplus_{i=1}^r(\L_0 e_i)^{\oplus t_i}$ as left $\L_0$-modules, 
one has $\L\otimes_{\L_0}M_0\cong P(\bft)$ in $\gmod \L$. 
As the degree-zero component of $\varepsilon_M$ is an isomorphism, $\Omega M$ belongs to $\A_\X$. In particular, one has
\begin{align*}
&\dim_k e_i(\Omega M)_0=\dim_k e_i(\ker\varepsilon_M)_1=\dim_k e_iP(\bft)_1-\dim_k e_i M_1=\sum_{j=1}^r(\dim_k e_i \L_1e_j)t_j-s_i \\
&\dim_k e_i(\Omega M)_1=\dim_k e_i(\ker\varepsilon_M)_2=\dim_k e_iP(\bft)_2=\sum_{j=1}^r(\dim_k e_i \L_2e_j)t_j.
\end{align*}
Lemmas \ref{basic properties1}(2) and \ref{basic properties2}(1) give $\dim_k e_i \L_1e_j=E_{ij}$ and $\dim_k e_i \L_2e_j=\dim_k e_i S_j=\delta_{ij}$, which shows
$$
\bft(\Omega M)=E\bft-\bfs,
\quad
\bfs(\Omega M)=\bft.
$$
If $f:M\to N$ is a morphism in $\A_\X$, then there is a commutative diagram
$$
\xymatrix{
&0\ar[r]&(\Omega M)(-1)\ar[r]&\L\otimes_{\L_0}M_0\ar[r]^{\varepsilon_M}\ar[d]_{\id_{\L}\otimes f_0}
&M\ar[d]^f\ar[r] &0\\
&0\ar[r]&(\Omega N)(-1)\ar[r]&\L\otimes_{\L_0}N_0\ar[r]^{\varepsilon_N}
&N\ar[r] &0
}
$$
with exact rows, which induces a morphism $\Omega f:\Omega M\to\Omega N$. This defines a functor $\Omega$.
Since $\L_0$ is semisimple, the functor $\L\otimes_{\L_0}(-)_0$ is exact on $\A_\X$. So $\Omega$ is an exact functor on $\A_\X$ by the snake lemma.

We show that $\Omega$ is fully faithful and essentially surjective.
For fullness, let $g:\Omega M\to\Omega N$ in $\A_\X$. 
Lemma \ref{Matlis duality} yields that $\L\otimes_{\L_0} N_0\cong P(\bft(N))$ is a graded injective $\L$-module, and hence there exist $h, f$ and a commutative diagram
$$
\xymatrix{
&0\ar[r]&(\Omega M)(-1)\ar[r]\ar[d]_{g(-1)}&\L\otimes_{\L_0}M_0\ar[r]^{\varepsilon_M}\ar[d]_{h}
&M\ar[d]^f\ar[r] &0\\
&0\ar[r]&(\Omega N)(-1)\ar[r]&\L\otimes_{\L_0}N_0\ar[r]^{\varepsilon_N}
&N\ar[r] &0,
}
$$
in $\gmod\L$. As $(\varepsilon_N)_0$ is an isomorphism, the degree-zero component of $h$ and $\id_{\L}\otimes f_0$ coincide.
This implies that $h=\id_{\L}\otimes f_0$ since $\L\otimes_{\L_0}M_0$ is generated by the elements of degree zero. We get $g=\Omega f$.

For faithfulness, suppose $\Omega f=0$ for $f:M\to N$ in $\A_\X$.  
Then $\id_\L\otimes f_0$ factors through a morphism $u:M\to\L\otimes_{\L_0}N_0=:P$ in $\gmod\L$.
As $\soc M=M_1$ and $\soc P=P_2$, we have $u(M_1)\subseteq P_1\cap \soc P=0$.
Hence $Ju(M_0)=u(JM_0)=u(M_1)=0$, which means that $u(M_0)\subseteq P_0\cap \soc P=0$. Thus $u=0$, and hence $\id_\L\otimes f_0$ and $f$ are zero.

It remains to prove essential surjectivity.
One has $\soc(M(-1))=(M(-1))_2\cong P(\bfs)_2=\soc(P(\bfs))$.
Lemmas \ref{basic properties1}(2) and \ref{Matlis duality} give a
graded injective envelope 
$$
0\to M(-1)\to P(\bfs)\xrightarrow{q} L\to0.
$$
Then $q_0$ is an isomorphism, and $L\in\A_\X$. Hence, under the induced identification $\L\otimes_{\L_0}L_0\cong P(\bfs)$, the morphism $q$ is $\varepsilon_L$, since both maps agree in degree
zero. Therefore $\Omega L\cong M$.

Consequently $\Omega$ is an exact equivalence. Choose a quasi-inverse $\Omega^{-1}$ so that $\Omega^{-1}M$ is the module $L$ constructed above.
By full faithfulness these choices extend uniquely to a functor.
Shifting the defining sequence gives
$$
0\to M\to P(\bfs)(1)\to (\Omega^{-1}M)(1)\to0,
$$
and taking its degree-zero and degree-one components yields
$$
\bft(\Omega^{-1}M)=\bfs,
\quad
\bfs(\Omega^{-1}M)=E\bfs-\bft.
$$
The quasi-inverse of an exact equivalence is exact, and the final assertion follows immediately from the mutual inverse property.
\end{proof}

\section{The numerical recurrences and the case $\lambda\ne 2$}

We now turn from the discussion of graded modules to the numerical recurrence for the Betti numbers determined by the matrix $E$ and the syzygy and cosyzygy functors.
It is known from \cite{Ben, E, ESo, ESo2}, among others, that the behavior of algebras and modules changes substantially at the critical value two of the spectral radius.
In this section, by combining recurrence relations that are closely related to those in \cite{E} with the functors constructed in the preceding section, we determine the behavior of thick subcategories when the spectral radius is different from two.

\begin{nota}
Throughout this section, we use the same notation as in Notation \ref{notation}.
\end{nota}

We first prepare two lemmas on recurrence relations determined by $E$, which will be needed to state the main results of this section and the next.

\begin{lem}\label{zenkashki}
The following statements hold.
\begin{enumerate}[{\rm (1)}]
\item $\lambda$ is an eigenvalue of $E$ having multiplicity one, and there exists $\bfv\in\mathbb{R}_{>0}^r$ such that $E\bfv=\lambda\bfv$.
\item Assume that $\lambda>2$. Let $\bfv\in\mathbb{R}_{>0}^r$ with $E\bfv=\lambda\bfv$, let $\bfa,\bfb\in\mathbb{Z}_{\ge0}^r$ be nonzero, set
$\alpha=\bfv^{\T}\bfa$ and
$\beta=\bfv^{\T}\bfb$, and assume that
$\lambda\alpha\ge2\beta$ and $\lambda\beta\ge2\alpha$.
Define $\bfq_0=\bfa$, $\bfq_1=\bfb$, and
$$
\bfq_{n+1}=E\bfq_n-\bfq_{n-1}
$$
for $n\ge1$. Then there exists $n\ge2$ such that $\bfq_n^{\T}\bfa-\bfq_{n-1}^{\T}\bfb>0$.
\end{enumerate}
\end{lem}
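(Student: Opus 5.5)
For (1), I would apply the Perron--Frobenius theorem to $E$. The matrix $E$ has nonnegative integer entries and is symmetric by Lemma \ref{basic properties1}(3). Lemma \ref{basic properties1}(4) says precisely that $E$ is irreducible, i.e.\ there is no nonempty proper $I$ with $E_{ij}=0$ for all $i\in I$, $j\notin I$. Perron--Frobenius for irreducible nonnegative matrices then gives that the spectral radius $\lambda$ is itself an eigenvalue. It also gives that $\lambda$ has algebraic multiplicity one and a strictly positive eigenvector $\bfv$. Since $E$ is real symmetric, all eigenvalues are real and $\lambda=\max|\mu|$ agrees with the Perron root.

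For (2), I would diagonalize. Let $U_m$ denote the polynomials with $U_{-1}=0$, $U_0=1$ and $U_{m+1}(x)=xU_m(x)-U_{m-1}(x)$. Induction gives
$$
\bfq_n=U_{n-1}(E)\bfb-U_{n-2}(E)\bfa .
$$
Since $E$ is symmetric, so are the matrices $U_m(E)$, and hence
$$
\bfq_n^{\T}\bfa-\bfq_{n-1}^{\T}\bfb
=\bfa^{\T}\bigl(U_{n-1}(E)+U_{n-3}(E)\bigr)\bfb-\bfa^{\T}U_{n-2}(E)\bfa-\bfb^{\T}U_{n-2}(E)\bfb .
$$
Next I would take an orthonormal eigenbasis of $E$ whose first vector is $\bfv/\|\bfv\|$. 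For an eigenvalue $\mu$, write $a_\mu,b_\mu$ for the corresponding coordinates of $\bfa,\bfb$. The quantity then becomes $\sum_\mu c_n(\mu)$ with
$$
c_n(\mu)=\bigl(U_{n-1}(\mu)+U_{n-3}(\mu)\bigr)a_\mu b_\mu-U_{n-2}(\mu)(a_\mu^2+b_\mu^2).
$$
Write $\lambda=\rho+\rho^{-1}$ with $\rho>1$, which is possible since $\lambda>2$. Then $U_m(\lambda)=(\rho^{m+1}-\rho^{-m-1})/(\rho-\rho^{-1})$ and $U_{n-1}+U_{n-3}=\lambda U_{n-2}$. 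It follows that
$$
c_n(\lambda)=U_{n-2}(\lambda)\,\|\bfv\|^{-2}\bigl(\lambda\alpha\beta-\alpha^2-\beta^2\bigr).
$$
The hypotheses give $\lambda\alpha\beta\ge2\beta^2$ and $\lambda\alpha\beta\ge2\alpha^2$, so $\lambda\alpha\beta\ge\alpha^2+\beta^2$. Equality would force $\lambda\alpha=2\beta$ and $\lambda\beta=2\alpha$. Here $\alpha,\beta>0$ because $\bfv>0$ and $\bfa,\bfb$ are nonzero nonnegative vectors, so equality would give $\lambda^2=4$, which is impossible. Hence $c_n(\lambda)=C\rho^{n}+O(\rho^{-n})$ with $C>0$.

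It remains to show that the other eigenvalues cannot cancel this leading term, and I expect this to be the main obstacle. For $|\mu|\le2$, $U_m(\mu)$ grows at most linearly in $m$. For $2<|\mu|<\lambda$, $U_m(\mu)$ grows like $\sigma^m$ for some $\sigma<\rho$. In both cases $c_n(\mu)=o(\rho^n)$. The only possible competitor is $\mu=-\lambda$, which can occur when $E$ is bipartite. Since $U_m(-x)=(-1)^mU_m(x)$, one gets
$$
c_n(-\lambda)=(-1)^nU_{n-2}(\lambda)\bigl(-\lambda a_{-\lambda}b_{-\lambda}-a_{-\lambda}^2-b_{-\lambda}^2\bigr)=(-1)^nC'\rho^n+O(\rho^{-n}).
$$
Choosing the parity of $n$ so that $(-1)^nC'\ge0$, the sum is at least $C\rho^n-o(\rho^n)$ along that parity. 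So $\bfq_n^{\T}\bfa-\bfq_{n-1}^{\T}\bfb>0$ for some sufficiently large $n\ge2$.
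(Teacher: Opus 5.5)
Your proof is correct. Part (1) matches the paper's argument. For part (2) you reach the same spectral expression as the paper:
\[
d_n:=\bfq_n^{\T}\bfa-\bfq_{n-1}^{\T}\bfb=\sum_i U_{n-2}(\mu_i)\bigl(\mu_i a_ib_i-a_i^2-b_i^2\bigr),
\]
where your $U_m$ are the paper's $f_m$. You also use the same positivity $\lambda\alpha\beta-\alpha^2-\beta^2>0$ of the Perron coefficient. The final step, however, is genuinely different.

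Your route is direct asymptotics. You use the closed form of $U_m(\lambda)$ for $\lambda=\rho+\rho^{-1}$, together with separate growth bounds for $|\mu|\le 2$ and for $2<|\mu|<\lambda$. You then neutralize a possible eigenvalue $-\lambda$ by choosing a parity. This is legitimate, because $c_n(-\lambda)$ is exactly $(-1)^n$ times a fixed constant times $U_{n-2}(\lambda)\ge 0$.

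The paper instead argues by contradiction and needs no growth estimates at all. From $xf_{j-1}=f_j+f_{j-2}$ it expands $(x+\lambda)^m=\sum_j c_{m,j}f_{j-1}(x)$ with $c_{m,j}\ge 0$. Hence ``$d_n\le 0$ for all $n\ge 2$'' yields $\sum_i(\mu_i+\lambda)^m(\mu_ia_ib_i-a_i^2-b_i^2)\le 0$ for every $m$. Shifting by $\lambda$ places the whole spectrum in $[0,2\lambda]$, and only the Perron root attains $2\lambda$ because it is simple. So after dividing by $(2\lambda)^m$, all other weights lie in $[0,1)$ and tend to zero. In particular the eigenvalue $-\lambda$ receives weight $0$ automatically, so the bipartite case needs no separate treatment.

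The paper's trick is thus softer and uniform over all eigenvalues, but it only yields existence of some $n$. Your argument needs the Chebyshev closed form and a case split. In return it is quantitative: $d_n\ge C\rho^n-o(\rho^n)>0$ for all sufficiently large $n$ of a suitable parity.
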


\begin{proof}
(1) Lemma \ref{basic properties1}(3)--(4) shows that $E$ is a symmetric irreducible matrix with nonnegative integral entries. Hence the assertion follows from the Perron--Frobenius theorem.

(2) Set $\gamma=\lambda\alpha\beta-\alpha^2-\beta^2$. By assumption,
$$
\gamma=\frac{\alpha}{2}(\lambda\beta-2\alpha)+\frac{\beta}{2}(\lambda\alpha-2\beta)\ge0.
$$
Since $\alpha,\beta>0$, equality would imply $\lambda\beta=2\alpha$ and $\lambda\alpha=2\beta$, and hence $\lambda^2=4$, contrary to $\lambda>2$. Thus $\gamma>0$.
Since $E$ is real symmetric, we can choose an orthonormal basis $\bfz_1,\ldots,\bfz_r$ of $\mathbb{R}^r$ consisting of eigenvectors of $E$, with $\bfz_1=\bfv/\|\bfv\|$.
We write
$$
E\bfz_i=\mu_i\bfz_i, \quad \bfa=\sum_{i=1}^r a_i\bfz_i, \quad \bfb=\sum_{i=1}^r b_i\bfz_i.
$$
Then $\mu_1=\lambda$, $a_1=\alpha/\|\bfv\|$, and $b_1=\beta/\|\bfv\|$.
In particular,
\begin{equation}\label{rho coefficient}
\lambda a_1b_1-a_1^2-b_1^2=\gamma/\|\bfv\|^2>0.
\end{equation}
For a real number $x$, define
$$
f_{-1}(x)=0,\quad f_0(x)=1,\quad f_1(x)=x,\quad f_n(x)=xf_{n-1}(x)-f_{n-2}(x)\quad(n\ge2).
$$
The recurrence defining the $\bfq_n$ gives, by induction,
$$
\bfq_n=\sum_{i=1}^r\bigl(f_{n-1}(\mu_i)b_i-f_{n-2}(\mu_i)a_i\bigr)\bfz_i
$$
for every $n\ge1$.
Indeed, it is easy to see that the formula holds for $n=1,2$.
If the formula holds for $n$ and $n-1$, then
\begin{align*}
\bfq_{n+1}=E\bfq_n-\bfq_{n-1}&=\sum_{i=1}^r\mu_i\bigl(f_{n-1}(\mu_i)b_i-f_{n-2}(\mu_i)a_i\bigr)\bfz_i-\sum_{i=1}^r\bigl(f_{n-2}(\mu_i)b_i-f_{n-3}(\mu_i)a_i
\bigr)\bfz_i\\
&=\sum_{i=1}^r\Bigl[\bigl(\mu_i f_{n-1}(\mu_i)-f_{n-2}(\mu_i)\bigr)b_i-\bigl(\mu_i f_{n-2}(\mu_i)-f_{n-3}(\mu_i)\bigr)a_i\Bigr]\bfz_i\\
&=\sum_{i=1}^r\bigl(f_n(\mu_i)b_i-f_{n-1}(\mu_i)a_i\bigr)\bfz_i.
\end{align*}
For $n\ge2$, set
$$
d_n=\bfq_n^{\T}\bfa-\bfq_{n-1}^{\T}\bfb.
$$
Using the orthonormality of the $\bfz_i$ and the equality $f_{n-1}(x)+f_{n-3}(x)=x f_{n-2}(x)$, we obtain
\begin{align}\label{equality dn}
d_n&=\sum_{i=1}^r\bigl(f_{n-1}(\mu_i)b_i-f_{n-2}(\mu_i)a_i\bigr)a_i-\sum_{i=1}^r\bigl(f_{n-2}(\mu_i)b_i-f_{n-3}(\mu_i)a_i\bigr)b_i\\
&=\sum_{i=1}^r \bigl(f_{n-1}(\mu_i)+f_{n-3}(\mu_i)\bigr)a_ib_i-f_{n-2}(\mu_i)\bigl(a_i^2+b_i^2\bigr) \notag
\\  
&=\sum_{i=1}^r f_{n-2}(\mu_i)\bigl(\mu_i a_ib_i-a_i^2-b_i^2\bigr). \notag
\end{align}
We see that, for every $m\ge0$, there are nonnegative real numbers
$c_{m,1},\ldots,c_{m,m+1}$ such that
\begin{equation}\label{equality lambda rho}
(x+\lambda)^m=\sum_{j=1}^{m+1}c_{m,j}f_{j-1}(x)  \quad \text{holds for all $x$}.
\end{equation}
Indeed, this is clear for $m=0$ since $f_0(x)=1$, and if such $c_{m,1},\ldots,c_{m,m+1}$ exist for $m\ge 0$, then 
\begin{align*}
(x+\lambda)^{m+1}=(x+\lambda)\sum_{j=1}^{m+1}c_{m,j}f_{j-1}(x)&=\sum_{j=1}^{m+1}c_{m,j}\bigl(f_j(x)+f_{j-2}(x)\bigr)+\sum_{j=1}^{m+1}\lambda c_{m,j}f_{j-1}(x)\\
&=\sum_{j=1}^{m+2}\bigl(c_{m,j-1}+c_{m,j+1}+\lambda c_{m,j}\bigr)f_{j-1}(x)
\end{align*}
where $c_{m,0}=c_{m,m+2}=c_{m,m+3}=0$ since $x f_{j-1}(x)=f_j(x)+f_{j-2}(x)$.

Suppose that $d_n\le0$ for every $n\ge2$. Then, for every $m\ge0$,
$$
0\ge\sum_{j=1}^{m+1}c_{m,j}d_{j+1}=\sum_{j=1}^{m+1}c_{m,j}\sum_{i=1}^r f_{j-1}(\mu_i)\bigl(\mu_i a_ib_i-a_i^2-b_i^2\bigr)
=\sum_{i=1}^r(\mu_i+\lambda)^m\bigl(\mu_i a_ib_i-a_i^2-b_i^2\bigr)
$$
by \eqref{equality dn} and \eqref{equality lambda rho}.
Since $\mu_1=\lambda$, dividing by $(2\lambda)^m$ gives
$$
0\ge (\lambda a_1b_1-a_1^2-b_1^2)+\sum_{i=2}^r \left(\frac{\mu_i+\lambda}{2\lambda}\right)^m \bigl(\mu_i a_ib_i-a_i^2-b_i^2\bigr).
$$
Since $\lambda$ has multiplicity one and is the spectral radius of $E$, one has
$-\lambda\le\mu_i<\lambda$ for every $i\ge2$. Hence
$$
0\le\frac{\mu_i+\lambda}{2\lambda}<1.
$$
Letting $m\to\infty$ gives $0\ge \lambda a_1b_1-a_1^2-b_1^2$, contradicting \eqref{rho coefficient}. Thus $d_n>0$ for some $n\ge2$.
\end{proof}

\begin{lem}\label{two sided zenkashiki}
Let $(\bfq_m)_{m\in\Z}$ be a bi-infinite sequence of nonzero vectors in $\Z_{\ge0}^r$ satisfying
$$
\bfq_{m+1}=E\bfq_m-\bfq_{m-1}
$$
for every $m\in\Z$. Then $\lambda\ge2$.  If $\lambda=2$, then the set $\{\bfq_m\mid m\in\Z\}$ is finite.
\end{lem}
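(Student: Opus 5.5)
The plan is to reduce everything to a scalar recurrence by pairing with a Perron--Frobenius eigenvector. By Lemma \ref{zenkashki}(1) there is $\bfv\in\mathbb{R}_{>0}^r$ with $E\bfv=\lambda\bfv$. Since $E$ is symmetric, $\bfv^{\T}E=\lambda\bfv^{\T}$. Set $x_m=\bfv^{\T}\bfq_m$ for $m\in\Z$. Each $\bfq_m$ is a nonzero vector in $\Z_{\ge0}^r$ and $\bfv$ has strictly positive entries, so $x_m>0$ for every $m\in\Z$. Applying $\bfv^{\T}$ to the vector recurrence gives the scalar recurrence
$$
x_{m+1}+x_{m-1}=\lambda x_m\qquad(m\in\Z).
$$

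\emph{Step 1: $\lambda\ge2$.} Suppose $0\le\lambda<2$ and write $\lambda=2\cos\theta$ with $\theta\in(0,\pi/2]$. The characteristic roots of the recurrence are $e^{\pm i\theta}$. Since $\theta\in(0,\pi/2]$, these roots are distinct, so every real solution has the form $x_m=A\cos(m\theta+\varphi)$ with $A\ge0$.
\begin{itemize}
\item If $A=0$, then $x_m=0$ for all $m$, which contradicts positivity.
\item If $A>0$, consider the angles $m\theta+\varphi$ modulo $2\pi$. They advance by steps of length $\theta<\pi$. Hence they cannot jump over the closed arc $[\pi/2,3\pi/2]$, which has length $\pi$. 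So some $m$ satisfies $\cos(m\theta+\varphi)\le0$, i.e.\ $x_m\le0$, again a contradiction.
\end{itemize}
The case $\lambda=0$ is included: then $\theta=\pi/2$ and $x_{m+1}=-x_{m-1}$, which visibly forces a sign change. Thus $\lambda\ge2$.

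\emph{Step 2: finiteness when $\lambda=2$.} The recurrence becomes $x_{m+1}-x_m=x_m-x_{m-1}$. So $x_m=c+dm$ for some real constants $c,d$. If $d\ne0$, then $x_m<0$ for $m$ of large absolute value with the appropriate sign, contradicting positivity on all of $\Z$. Hence $d=0$ and $x_m=c$ for every $m$. Now let $v_{\min}=\min_i v_i>0$ and write $(\bfq_m)_i$ for the $i$th entry of $\bfq_m$. Since all entries of $\bfq_m$ are nonnegative, each satisfies $(\bfq_m)_i\le c/v_{\min}$. The $\bfq_m$ are therefore nonnegative integer vectors in a bounded box, and so $\{\bfq_m\mid m\in\Z\}$ is finite.

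The only step needing care is the sign-change argument for $\lambda<2$. If the trigonometric argument seems too informal, I would instead use the polynomials $f_n$ from the proof of Lemma \ref{zenkashki}: they satisfy $f_n(2\cos\theta)=\sin((n+1)\theta)/\sin\theta$, and the identity $x_n=f_{n-1}(\lambda)x_1-f_{n-2}(\lambda)x_0$ exhibits the sign change explicitly. The bi-infinite hypothesis is exactly what rules out $d\ne0$ in Step 2. A one-sided sequence could grow linearly, matching the linear-growth case in \cite{Ben}.
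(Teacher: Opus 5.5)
Your proof is correct and follows the paper's route. You pair the vectors with the Perron eigenvector $\bfv$ to get the scalar recurrence $x_{m+1}+x_{m-1}=\lambda x_m$ with $x_m>0$; for $\lambda=2$ you conclude that $x_m$ is constant, which bounds the entries of every $\bfq_m$.

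The only variation is the case $\lambda<2$. There the paper avoids the closed-form solution: it notes that $d_m=x_m-x_{m-1}$ satisfies $d_{m+1}-d_m=(\lambda-2)x_m\le(\lambda-2)\min_i v_i<0$, which forces $x_m<0$ for large $m$. Your trigonometric sign-change argument reaches the same contradiction equally validly, and it needs only positivity of the $x_m$, not the uniform lower bound coming from integrality.
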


\begin{proof}
Let $\bfv\in\mathbb{R}_{>0}^r$ be as in Lemma \ref{zenkashki}(1), and set $x_m=\bfv^{\T}\bfq_m$ for each $m\in\Z$.
Since all the entries of $\bfv$ are positive, while $\bfq_m$ is nonzero and has only nonnegative integral entries, we have
$$
x_m=\bfv^{\T}\bfq_m=\sum_{i=1}^r v_iq_{m,i}\ge\min\{v_1,\ldots,v_r\}>0
$$
for every $m\in\Z$, where $\bfv=(v_1,\ldots,v_r)^{\T}$ and $\bfq_m=(q_{m,1},\ldots,q_{m,r})^{\T}$. One has
$$
x_{m+1}+x_{m-1}=\bfv^{\T}(\bfq_{m+1}+\bfq_{m-1})=\bfv^{\T}(E\bfq_m)=(E\bfv)^{\T}\bfq_m=\lambda x_m
$$
by Lemma \ref{basic properties1}(3).  Put $d_m=x_m-x_{m-1}$.  If $\lambda<2$, then,
$$
d_{m+1}-d_m=x_{m+1}+x_{m-1}-2x_m=(\lambda-2)x_m
\le (\lambda-2)\min\{v_1,\ldots,v_r\}<0.
$$
Thus $d_m$ tends to $-\infty$ at least linearly as $m\to\infty$, forcing $x_m<0$ for $m\gg0$, a contradiction.

If $\lambda=2$, then $d_{m+1}=d_m$ for all $m$, so $x_m=x_0+mc$ for some $c\in\mathbb R$.  Since $x_m>0$ for both positive and negative $m$, one has $c=0$.  Hence $x_m=x_0$ for all $m$.  Since every entry of $\bfv$ is positive, each entry of every $\bfq_m$ is bounded above by $x_0/\min\{v_1,\ldots,v_r\}$.  Thus only finitely many $\bfq_m\in\Z_{\ge0}^r$ can occur.
\end{proof}

We are now ready to prove the main results of this section. The following proposition and theorem show that, when the spectral radius is different from $2$, the only thick subcategory containing the algebra itself is the trivial one.

\begin{prop}\label{less than two}
If $\lambda<2$, the only thick subcategories of $\mod\L$ containing $\L$ are $\add\L$ and $\mod\L$.
\end{prop}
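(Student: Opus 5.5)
Let $\X$ be a thick subcategory of $\mod\L$ containing $\L$ with $\X\ne\add\L$ and $\X\ne\mod\L$; we derive a contradiction. We may assume $(\rad\L)^2\ne0$, since otherwise Remark \ref{square zero} already gives the assertion, and we are then in the situation of Notation \ref{notation}. Fix a grading of $\L$ as in Lemma \ref{basic properties2}(1) and form the abelian category $\A_\X$ of Lemma \ref{thick to abelian}(2).

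Since $\X\ne\add\L$, the subcategory $\X$ contains a nonprojective module, so Lemma \ref{thick to abelian}(2)(ii) yields a nonzero object $M\in\A_\X$. Since $\X$ is proper, Lemma \ref{syzygy functors} applies and provides the mutually inverse exact autoequivalences $\Omega,\Omega^{-1}$ of $\A_\X$. For every $m\in\Z$ the object $\Omega^mM$ is then nonzero, by the final assertion of Lemma \ref{syzygy functors}. Set $\bfq_m=\bft(\Omega^mM)$ for $m\in\Z$.

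The dimension formulas of Lemma \ref{syzygy functors} give $\bfs(\Omega N)=\bft(N)$ and $\bft(\Omega N)=E\bft(N)-\bfs(N)$ for every $N\in\A_\X$. Applying these with $N=\Omega^mM$, and using $\bfs(\Omega^mM)=\bft(\Omega^{m-1}M)=\bfq_{m-1}$, we obtain
\[
\bfq_{m+1}=E\bfq_m-\bfq_{m-1}\quad\text{for all } m\in\Z .
\]
Each $\bfq_m$ lies in $\Z_{\ge0}^r$. Moreover each $\bfq_m$ is nonzero: by Lemma \ref{thick to abelian}(2)(iii), every nonzero object $N$ of $\A_\X$ satisfies $N_0\ne0$, so $\bft(N)\ne0$.

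Hence $(\bfq_m)_{m\in\Z}$ is a bi-infinite sequence of nonzero vectors in $\Z_{\ge0}^r$ satisfying the recurrence, and Lemma \ref{two sided zenkashiki} forces $\lambda\ge2$. This contradicts $\lambda<2$, so $\X$ must be $\add\L$ or $\mod\L$.

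There is no serious obstacle here. The only point needing care is that the recurrence holds in both directions; this is exactly why the two-sided invertibility of $\Omega$ in Lemma \ref{syzygy functors} is needed.
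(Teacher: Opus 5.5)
Your proposal is correct and follows the paper's proof: reduce to $J^2\ne0$ via Remark~\ref{square zero}, take a nonzero object of $\A_\X$, iterate $\Omega^{\pm1}$ from Lemma~\ref{syzygy functors} to obtain a bi-infinite sequence of nonzero vectors in $\Z_{\ge0}^r$ satisfying $\bfq_{m+1}=E\bfq_m-\bfq_{m-1}$ (nonzero by Lemma~\ref{thick to abelian}(2)(iii)), and conclude $\lambda\ge2$ from Lemma~\ref{two sided zenkashiki}. The only difference is cosmetic indexing: you set $\bfq_m=\bft(\Omega^mM)$, whereas the paper starts from $\bfq_0=\bft(X)$, $\bfq_1=\bfs(X)$, which gives the same sequence run in the opposite direction.
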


\begin{proof}
By Remark \ref{square zero}, we may assume that $J^2\ne 0$.
Suppose that there exists a thick subcategory $\X$ of $\mod\L$ such that $\add\L\subsetneq\X\subsetneq\mod\L$.
Fix a grading of $\L$ as in Lemma \ref{basic properties2}(1).
By Lemma \ref{thick to abelian}(2)(ii), there is a nonzero object $X\in\A_\X$, and Lemma \ref{syzygy functors} allows $\Omega$ and $\Omega^{-1}$ to be iterated in both directions inside $\A_\X$.
Set $\bfq_0=\bft(X)$ and $\bfq_1=\bfs(X)$. 
The formulas in Lemma \ref{syzygy functors} show, by successively applying $\Omega^{-1}$ and $\Omega$, that there is a bi-infinite sequence $(\bfq_m)_{m\in\Z}$ of vectors in $\Z_{\ge0}^r$ satisfying
$$
\bfq_{m+1}=E\bfq_m-\bfq_{m-1}
$$
for every $m\in\Z$. 
Note that these vectors are nonzero by Lemma \ref{thick to abelian}(2)(iii).
Hence Lemma \ref{two sided zenkashiki} states $\lambda\ge2$, contradicting the assumption $\lambda<2$.
\end{proof}

\begin{thm}\label{more than two}
If $\lambda>2$, then the only thick subcategories of $\mod\L$ containing $\L$ are $\add\L$ and $\mod\L$.
\end{thm}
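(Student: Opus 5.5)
We argue by contradiction, as in Proposition \ref{less than two}. By Remark \ref{square zero} we may assume $J^2\ne0$. Suppose $\X$ is thick with $\add\L\subsetneq\X\subsetneq\mod\L$, and fix a grading of $\L$ as in Lemma \ref{basic properties2}(1). Lemma \ref{thick to abelian}(2)(ii) gives a nonzero object of $\A_\X$. Since $\A_\X$ is abelian of finite length (Lemma \ref{thick to abelian}(2)(i)), we may take $X\in\A_\X$ to be a \emph{simple} object. Then $\Omega^nX$ is simple in $\A_\X$ for every $n\in\Z$, because $\Omega$ is an exact autoequivalence (Lemma \ref{syzygy functors}). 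Set $\bfa=\bft(X)$ and $\bfb=\bfs(X)$, both nonzero by Lemma \ref{thick to abelian}(2)(iii). Define $\bfq_n$ by the recurrence of Lemma \ref{zenkashki}(2). Lemma \ref{syzygy functors} then identifies these vectors with the components of the dimension vectors of the cosyzygies $\Omega^{-n}X$. In particular all $\bfq_n\in\Z_{\ge0}^r$.

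The key step is to obtain a nonzero morphism from simplicity and derive a contradiction. Lemma \ref{zenkashki}(2) produces $n\ge2$ with $\bfq_n^{\T}\bfa-\bfq_{n-1}^{\T}\bfb>0$. We interpret this quantity as an Euler-type pairing between $X$ and $Y:=\Omega^{-(n-1)}X$, in the form
\[
\dim_k\gHom_\L(X,Y)-\dim_k\Ext^1_{\gmod\L}(X,Y)\quad\text{or}\quad \dim_k\gHom-\dim_k\Ext^1 \text{ in }\A_\X .
\]
Concretely, apply $\gHom_\L(-,Y)$ (or $\gHom_\L(X,-)$) to the graded projective presentation $0\to(\Omega X)(-1)\to P(\bft)\to X\to0$. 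We have $\dim_k\gHom_\L(P(\bft),Y)=\bft^{\T}\bft(Y)$, and the Hom into the shifted syzygy contributes $\bfs$-terms, which gives a pairing of the shape $\bft(X)^{\T}\bft(Y)-\bfs(X)^{\T}\bfs(Y)$. This matches $\bfq_n^{\T}\bfa-\bfq_{n-1}^{\T}\bfb$ after suitable indexing, using $\bft(\Omega^{-1}M)=\bfs(M)$. Positivity therefore forces $\gHom(X,\Omega^{-m}X)\ne0$ for some $m\ge1$. Since both objects are simple in $\A_\X$, such a morphism is an isomorphism, so $\Omega^{-m}X\cong X$.

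An isomorphism $\Omega^{-m}X\cong X$ makes $(\bfq_n)$ periodic, hence bounded. On the other hand $x_n=\bfv^{\T}\bfq_n$ satisfies $x_{n+1}+x_{n-1}=\lambda x_n$ with $\lambda>2$, as in the proof of Lemma \ref{two sided zenkashiki}. A positive bi-infinite solution of this recurrence has the form $c_1\rho^n+c_2\rho^{-n}$ with $\rho>1$ and $c_1,c_2$ not both zero, so it is unbounded. This contradiction shows no such $\X$ exists.

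The hypotheses $\lambda\alpha\ge2\beta$ and $\lambda\beta\ge2\alpha$ of Lemma \ref{zenkashki}(2) are the other point to settle. We plan to arrange them by choosing $X$ within its $\Omega$-orbit. The sequence $x_n$ is positive and convex-like, with $x_{n+1}+x_{n-1}=\lambda x_n$, so it has a minimum index. Replacing $X$ by $\Omega^{-n}X$ at that minimum gives $x_{n\pm1}\ge x_n$, and hence $\lambda x_n\ge 2x_n$. This alone is not exactly the needed pair of inequalities, so we will instead pick consecutive indices $\alpha=x_n$, $\beta=x_{n+1}$ at the minimum. Then $\lambda\alpha=x_{n-1}+\beta\ge\ldots$, and we expect to verify the two inequalities by a short case check.

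The main obstacle is the homological identification: making the numerical quantity $d_n$ from Lemma \ref{zenkashki}(2) agree exactly, with correct signs and degree shifts, with a Hom-minus-Ext count in the graded category, so that positivity yields a nonzero degree-zero morphism. We would try first the long exact sequence above combined with Lemma \ref{graded Ext2}, which kills higher terms, and use the graded self-injectivity from Lemma \ref{Matlis duality} to compare $\Ext^1$ with Hom into cosyzygies.
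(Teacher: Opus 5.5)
The step you flag as the main obstacle is not merely unfinished: the route you propose for it gives the wrong sign. Apply $\gHom_\L(-,Y)$ to $0\to(\Omega X)(-1)\to P(\bft)\to X\to0$. The term $\gHom_\L((\Omega X)(-1),Y)$ has dimension $\bft(\Omega X)^{\T}\bfs(Y)=(E\bft-\bfs)^{\T}\bfs(Y)$, so the Euler form is
\[
\dim_k\gHom_\L(X,Y)-\dim_k\Ext^1_\L(X,Y)_0=\bft(X)^{\T}\bft(Y)+\bfs(X)^{\T}\bfs(Y)-\bft(X)^{\T}E\,\bfs(Y),
\]
not $\bft(X)^{\T}\bft(Y)-\bfs(X)^{\T}\bfs(Y)$. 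Take $Y=\Omega^{-m}X$, so $\bft(Y)=\bfq_m$ and $\bfs(Y)=\bfq_{m+1}$. The relation $E\bfq_{m+1}=\bfq_{m+2}+\bfq_m$ turns the right-hand side into $-(\bfq_{m+2}^{\T}\bfa-\bfq_{m+1}^{\T}\bfb)$. Hence the positivity supplied by Lemma~\ref{zenkashki}(2) makes this form \emph{negative}. It yields $\Ext^1_\L(X,\Omega^{-(n-2)}X)_0\neq0$ and gives no lower bound for $\dim_k\gHom_\L(X,\Omega^{-m}X)$. Since this is an exact identity, no further comparison via Lemma~\ref{graded Ext2} or Lemma~\ref{Matlis duality} can extract a nonzero morphism from it. As written, the key step fails.

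The repair is to count morphisms \emph{into} $X$, as the paper does. Splicing the cosyzygy sequences of Lemma~\ref{syzygy functors} gives $P(\bfq_{n-1})(-1)\to P(\bfq_n)\to\Omega^{-n}X\to0$. Apply $\gHom_\L(-,X)$, using $\dim_k\gHom_\L(P(\bfu),X)=\bfu^{\T}\bfa$ and $\dim_k\gHom_\L(P(\bfu)(-1),X)=\bfu^{\T}\bfb$. This gives $\dim_k\gHom_\L(\Omega^{-n}X,X)\ge\bfq_n^{\T}\bfa-\bfq_{n-1}^{\T}\bfb>0$, with no $\Ext$ term needed.

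With this in place, the rest of your plan is a valid and slightly leaner variant of the paper's. The paper takes $X$ of minimal weight $w=\bfv^{\T}(\bft+\bfs)$ in all of $\A_\X$, which gives both inequalities. It then uses Proposition~\ref{kernel cokernel} plus minimality to kill $\cok f$ and $\Omega^n\ker f$. Your simple $X$ makes the map $\Omega^{-n}X\to X$ an isomorphism immediately, so the inequalities only need to be arranged along the orbit.

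For that, put $x_k=\bfv^{\T}\bfq_k$ for $k\in\Z$ and take $m$ minimizing $x_m+x_{m+1}=w(\Omega^{-m}X)$. A minimum exists since only finitely many $\bfu\in\Z_{\ge0}^r$ have $\bfv^{\T}\bfu$ below a given bound. Comparing with $m\pm1$ gives $x_{m+1}\le x_{m-1}$ and $x_m\le x_{m+2}$. By $x_{k+1}+x_{k-1}=\lambda x_k$, these are exactly $\lambda\alpha\ge2\beta$ and $\lambda\beta\ge2\alpha$ for the simple object $\Omega^{-m}X$.

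Your closing argument is fine: a periodic positive solution of $x_{k+1}+x_{k-1}=\lambda x_k$ cannot exist when $\lambda>2$. The paper reaches the same contradiction by summing the recurrence over one period.
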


\begin{proof}
Remark \ref{square zero} shows $J^2\ne 0$. Suppose, towards a contradiction, that there exists a thick subcategory $\X$ of $\mod\L$ such that $\add\L\subsetneq\X\subsetneq\mod\L$. Fix a grading of $\L$ as in Lemma \ref{basic properties2}(1). Let $\A_\X$ be as in Lemma \ref{thick to abelian}(2) and let $\bfv\in\mathbb{R}_{>0}^r$ be as in Lemma \ref{zenkashki}(1). For $L=L_0\oplus L_1\in\A_\X$, we set
$$
w(L)=\bfv^{\T}(\bft(L)+\bfs(L)).
$$
Choose a nonzero $X\in\A_\X$ with $w(X)$ minimal; such a choice exists since $\A_\X$ contains a nonzero module by Lemma \ref{thick to abelian}(2)(ii), all entries of $\bfv$ are positive, and all entries of $\bft(L)$ and $\bfs(L)$ are nonnegative integers for all $L\in\A_\X$. Set $\bft=\bft(X)$, $\bfs=\bfs(X)$, $\alpha=\bfv^{\T}\bft$, and $\beta=\bfv^{\T}\bfs$. By Lemma \ref{thick to abelian}(2)(iii), both $\bft$ and $\bfs$ are nonzero. As all entries of $\bfv$ are positive, we have $\alpha,\beta>0$. By Lemma \ref{syzygy functors},  $\Omega$ and $\Omega^{-1}$ are mutually inverse autoequivalences of $\A_\X$. Since $E$ is symmetric, Lemma \ref{syzygy functors} and the minimality of $X$ give
\begin{align*}
&\alpha+\beta=w(X)\le w(\Omega X)=\bfv^{\T}(E\bft-\bfs+\bft)=(E\bfv)^{\T}\bft-\bfv^{\T}\bfs+\bfv^{\T}\bft=\lambda\alpha-\beta+\alpha \\
&\alpha+\beta=w(X)\le w(\Omega^{-1}X)=\bfv^{\T}(\bfs+E\bfs-\bft)=\bfv^{\T}\bfs+(E\bfv)^{\T}\bfs-\bfv^{\T}\bft=\beta+\lambda\beta-\alpha,
\end{align*}
which means that $\lambda\alpha\ge2\beta$ and $\lambda\beta\ge2\alpha$.

Let $\bfq_n$ be the sequence defined recursively by $\bfq_0=\bft$, $\bfq_1=\bfs$, and $\bfq_{m+1}=E\bfq_m-\bfq_{m-1}$. Lemma \ref{zenkashki}(2) states that there exists $n\ge 2$ such that $\bfq_n^{\T}\bft-\bfq_{n-1}^{\T}\bfs>0$.
Choosing iterated cosyzygies $\Omega^{-m}X$ inductively by Lemma \ref{syzygy functors} with $\Omega^0X=X$, we obtain the equalities $\bft(\Omega^{-m}X)=\bfq_m$ and $\bfs(\Omega^{-m}X)=\bfq_{m+1}$ for all $m\ge 0$.
Splicing the cosyzygy sequences in Lemma \ref{syzygy functors} gives an exact sequence
$$
(P(\bfq_{n-1}))(-1)\to P(\bfq_n)\to \Omega^{-n}X\to0
$$
in $\gmod\L$. Applying $\gHom_\L(-,X)$ yields an exact sequence
$$
0\to \gHom_\L(\Omega^{-n}X,X)\to \gHom_\L(P(\bfq_n),X)\to \gHom_\L((P(\bfq_{n-1}))(-1),X).
$$
Set $\bfq_m=(q_{m,1},\ldots,q_{m,r})^{\T}$. The above exact sequence implies an inequality
\begin{align*}
\dim_k\gHom_\L(\Omega^{-n}X,X)&\ge\dim_k\gHom_\L(P(\bfq_n),X)- \dim_k\gHom_\L((P(\bfq_{n-1}))(-1),X) \\
&=\dim_k \bigoplus_{i=1}^r\gHom_\L(\L e_i,X)^{\oplus q_{n,i}}- \dim_k\bigoplus_{i=1}^r\gHom_\L(\L e_i,X(1))^{\oplus q_{n-1,i}}\\
&=\dim_k \bigoplus_{i=1}^r (e_iX_0)^{\oplus q_{n,i}}- \dim_k\bigoplus_{i=1}^r (e_iX_1)^{\oplus q_{n-1,i}}=\bfq_n^{\T}\bft-\bfq_{n-1}^{\T}\bfs>0.
\end{align*}
Hence there is a nonzero morphism $f:\Omega^{-n}X\to X$ in $\gmod\L$.

Put $K=\ker f$ and $C=\cok f$. By Proposition \ref{kernel cokernel}, the underlying modules of $K$ and $C$ belong to $\X$, and hence $K,C\in\A_\X$. 
Since $C$ is a quotient of $X$, every component of $\bft(C)+\bfs(C)$ is at most the corresponding component of $\bft(X)+\bfs(X)$. Moreover, since $f\ne0$, at least one of these inequalities is strict.
As all entries of $\bfv$ are positive, one has $w(C)<w(X)$. The minimality of $X$ gives $C=0$, and
$$
0\to K\to \Omega^{-n}X\to X\to0
$$
is exact. Applying Lemma \ref{syzygy functors} successively $n$ times, there is an exact sequence
$$
0\to \Omega^nK\to\Omega^n(\Omega^{-n}X)\to \Omega^nX\to0
$$
in $\A_\X$ with $\Omega^n(\Omega^{-n}X)\cong X$.
Since $\Omega^nX\ne0$ by Lemma \ref{syzygy functors}, one gets $w(\Omega^nK)<w(X)$ by the same argument as above. The minimality of $X$ gives $\Omega^nK=0$, and hence $K=0$ again by Lemma \ref{syzygy functors}. Therefore $\Omega^{-n}X\cong X$ in $\gmod\L$, and hence $\bfq_n=\bft$ and $\bfq_{n+1}=\bfs$.
Set $x_m=\bfv^{\T}\bfq_m$. Then
$$
x_{m+1}+x_{m-1}=\bfv^{\T}(\bfq_{m+1}+\bfq_{m-1})=\bfv^{\T} E\bfq_m=(E\bfv)^{\T} \bfq_m=(\lambda\bfv)^{\T} \bfq_m=\lambda x_m
$$
for $m\ge1$, and $x_n=x_0$, $x_{n+1}=x_1$.
Summing the above equality for $m=1,\ldots,n$, we obtain
$$
2\sum_{m=1}^n x_m=\sum_{m=1}^n(x_{m+1}+x_{m-1})=\sum_{m=1}^n\lambda x_m=\lambda\sum_{m=1}^n x_m.
$$
Since $\bfq_m\in\mathbb{Z}_{\ge 0}^r$ and $\bfv\in\mathbb{R}_{>0}^r$, one has $x_m\ge0$, while $x_1=\beta>0$. Hence $\sum_{m=1}^n x_m>0$, contradicting $\lambda>2$. The proof is now completed.
\end{proof}

Let $\underline{\mod}\L$ denote the \textit{stable module category} of $\L$; its objects are the finitely generated left $\L$-modules, and the morphism space from $M$ to $N$ is
$$
\underline{\Hom}_\L(M,N)=\Hom_\L(M,N)/\{\text{morphisms factoring through projective modules}\}.
$$
Since $\L$ is self-injective, $\mod\L$ is a Frobenius category and hence $\underline{\mod}\L$ is a triangulated category whose suspension functor is given by the cosyzygy functor $\Omega^{-1}$.
For a triangulated category, a \textit{thick subcategory} means a full triangulated subcategory closed under direct summands.
Using the standard correspondence between thick subcategories of $\mod\L$ containing $\L$ and thick subcategories of $\underline{\mod}\L$ (see \cite[Theorem 1 and Remark 2]{KS} and \cite[Theorem 2.1]{R}), the assertion follows from the results above.

\begin{cor}\label{stable corollary}
If $\lambda\ne 2$, then the stable module category $\underline{\mod}\L$ has no nonzero proper thick subcategory.
\end{cor}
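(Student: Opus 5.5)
The plan is to deduce this directly from Proposition \ref{less than two} and Theorem \ref{more than two}, using the standard bijection between thick subcategories of $\mod\L$ containing $\L$ and thick subcategories of $\underline{\mod}\L$ recalled just above. That bijection sends $\X\supseteq\add\L$ to its image $\underline{\X}$ in $\underline{\mod}\L$. Conversely, it sends a thick subcategory $\mathcal{T}$ of $\underline{\mod}\L$ to the full subcategory of modules whose stable class lies in $\mathcal{T}$ (see \cite[Theorem 1 and Remark 2]{KS}, \cite[Theorem 2.1]{R}).

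First, I will verify that these assignments are mutually inverse and order-preserving. The key points are the following.
\begin{enumerate}[(1)]
\item Short exact sequences in $\mod\L$ induce distinguished triangles in $\underline{\mod}\L$.
\item Every distinguished triangle in $\underline{\mod}\L$ is, up to isomorphism, induced by a short exact sequence. This is where self-injectivity is used: $\mod\L$ is Frobenius.
\item Stable isomorphism $M\cong N$ in $\underline{\mod}\L$ means $M\oplus P\cong N\oplus Q$ with $P,Q$ projective.
\end{enumerate}
Since a thick $\X$ contains $\add\L$ and is closed under summands, point (3) shows that membership in $\X$ depends only on the stable class. Points (1) and (2) then transfer the two-out-of-three property and closure under summands in both directions.

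Under the bijection, $\add\L$ corresponds to the zero subcategory and $\mod\L$ corresponds to all of $\underline{\mod}\L$. If $\lambda<2$, Proposition \ref{less than two} gives exactly two thick subcategories of $\mod\L$ containing $\L$. If $\lambda>2$, Theorem \ref{more than two} gives the same conclusion. Hence, for $\lambda\ne2$, $\underline{\mod}\L$ has only the two trivial thick subcategories, $0$ and $\underline{\mod}\L$.

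Proposition \ref{less than two} already covers the case $J^2=0$ through Remark \ref{square zero}, and Theorem \ref{more than two} needs no extra hypothesis. So no additional case analysis is required, but I will check that the blanket hypotheses of Notation \ref{notation} match those of the cited results.

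The only real obstacle is confirming that the cited correspondence applies under our hypotheses: $\L$ is self-injective, so $\mod\L$ is Frobenius with projectives equal to injectives. If needed, I will prove it directly by the verification above, using the mapping-cone construction of triangles from injective envelopes.
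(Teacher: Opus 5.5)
Your proposal is correct and is essentially the paper's argument. The paper likewise deduces the corollary from Proposition \ref{less than two} and Theorem \ref{more than two} via the correspondence of \cite[Theorem 1 and Remark 2]{KS} and \cite[Theorem 2.1]{R}, under which $\add\L$ and $\mod\L$ correspond to $0$ and $\underline{\mod}\L$. Your sketch verifying that correspondence directly (Frobenius triangles coming from short exact sequences, and stable isomorphism up to projective summands) is sound, but the paper simply cites it.
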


Combining Proposition \ref{less than two} and Theorem \ref{more than two}, we see that the existence of a thick subcategory $\X$ with $\add\L\subsetneq\X\subsetneq\mod\L$ forces $\lambda=2$. This is reminiscent of \cite[Proposition 3.6]{E}, where Erdmann proved, under the hypotheses considered there, that the existence of a nonprojective Ext-finite module also forces $\lambda=2$.
The corollary below refines it.

\begin{cor}\label{Ext Tor rigidity}
If $\lambda\ne 2$, then the following statements hold.
\begin{enumerate}[{\rm (1)}]
\item
If $M,N\in\mod\L$ satisfy $\Ext_\L^n(M,N)=0$ for all $n\gg0$, then at least one of $M$ and $N$ is projective.
\item If $M\in\mod\L$ and $N\in\mod\L^{\rm op}$ satisfy $\Tor_n^\L(N,M)=0$ for all $n\gg0$, then at least one of $M$ and $N$ is projective on the respective side.
\end{enumerate}
\end{cor}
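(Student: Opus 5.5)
The plan is to realize each vanishing condition as membership in a thick subcategory containing $\L$, and then apply Proposition \ref{less than two} and Theorem \ref{more than two}.

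For (1), fix $N$ and consider
$$
\X_N=\{X\in\mod\L\mid \Ext_\L^n(X,N)=0\text{ for all }n\gg0\}.
$$
First I check that $\X_N$ is thick and contains $\L$.
\begin{enumerate}[\rm (a)]
\item It contains $\L$, since projective modules have no higher Ext.
\item It is closed under direct summands, because Ext is additive.
\item Given a short exact sequence $0\to A\to B\to C\to0$, the long exact sequence
$$
\cdots\to\Ext^n(C,N)\to\Ext^n(B,N)\to\Ext^n(A,N)\to\Ext^{n+1}(C,N)\to\cdots
$$
shows that if two of $A,B,C$ have eventually vanishing Ext into $N$, then so does the third. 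The threshold for $n$ may shift by one, which is harmless.
\end{enumerate}
Hence $\X_N$ is a thick subcategory containing $\L$. Since $\lambda\ne2$, Proposition \ref{less than two} and Theorem \ref{more than two} give $\X_N=\add\L$ or $\X_N=\mod\L$.

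If $M$ is not projective, then $M\in\X_N\setminus\add\L$, so $\X_N=\mod\L$. In particular every simple module lies in $\X_N$, and hence so does $\L/J$. Now $\Ext^n_\L(\L/J,N)$ is dual to the $n$th term of a minimal injective resolution of $N$, so it vanishes for $n\gg0$ exactly when $N$ has finite injective dimension. Since $\L$ is self-injective, finite injective dimension forces $N$ to be injective, that is, projective. A cleaner way to say this is that $\L/J\in\X_N$ gives $\Ext^n(\L/J,N)=0$ for large $n$, so $\operatorname{id}N<\infty$. Over a self-injective algebra, $\Omega^{-n}N$ is injective for $n\gg0$, and a standard dimension shift then shows $N$ is injective.

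For (2), take $N\in\mod\L^{\rm op}$ and set
$$
\mathcal{Y}_N=\{X\in\mod\L\mid\Tor_n^\L(N,X)=0\text{ for }n\gg0\}.
$$
The same long-exact-sequence argument shows that $\mathcal{Y}_N$ is thick and contains $\L$. If $M$ is nonprojective, then $\mathcal{Y}_N=\mod\L$, so $\Tor_n^\L(N,\L/J)=0$ for $n\gg0$. This forces the minimal projective resolution of $N$ to be finite, so $N$ has finite projective dimension. Over the self-injective algebra $\L$ this means $N$ is projective, since a finite-length resolution by projective-injectives splits from the left.

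Alternatively, (2) reduces to (1) via the duality $D\Tor_n^\L(N,M)\cong\Ext^n_\L(M,DN)$, noting that $DN$ is projective (being injective over a self-injective algebra) if and only if $N$ is projective.

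The main subtlety is the final step in each case: passing from vanishing against all simples to projectivity. This uses that minimal resolutions over a finite-dimensional algebra are detected by $\Ext(-,\text{simple})$ and $\Tor(-,\text{simple})$, together with self-injectivity collapsing finite homological dimension to zero.
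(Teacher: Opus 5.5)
Your proof is correct and follows the paper's argument. The paper also takes the thick subcategory of modules eventually Ext-orthogonal to $N$ and applies the $\add\L$/$\mod\L$ dichotomy from Proposition \ref{less than two} and Theorem \ref{more than two}. When $\X=\mod\L$, it concludes that $N$ has finite injective dimension, and hence is projective by self-injectivity. Your treatment of (2), either through the Tor analogue or through $D\Tor_n^\L(N,M)\cong\Ext_\L^n(M,DN)$, supplies the details the paper leaves as ``similar.''
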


\begin{proof}
For (1), we consider the subcategory $\X=\{X\in\mod\L\mid \Ext_\L^n(X,N)=0\text{ for all }n\gg0\}$ that is thick and contains $\L$.
If $\X=\mod\L$, then $N$ is eventually Ext-orthogonal to every module. This means that $N$ has finite injective dimension.
As $\L$ is self-injective, $N$ is injective, and hence projective.
Otherwise, $\X$ is proper.
Proposition \ref{less than two} and Theorem \ref{more than two} show that $\X=\add\L$, and hence $M\in\X$ is projective.
This proves (1). The proof of (2) is similar.
\end{proof}

\section{The boundary case $\lambda=2$}

In this section, we consider the boundary case $\lambda=2$. This is the only case in which nontrivial proper thick subcategories containing $\L$ can occur, by the results of the preceding section. Our aim is to describe all of them.

\begin{nota}
Throughout this section, we use the same notation as in Notation \ref{notation} and assume in addition that $\lambda=2$. Fix a grading of $\L$ as in Lemma \ref{basic properties2}(1).
\end{nota}

As Benson \cite{Ben} showed under the hypotheses considered there, the spectral radius is closely related to the growth of minimal projective resolutions. The growth of Betti numbers also plays a crucial role in the study of thick subcategories considered in this paper. We begin by recalling some basic notions concerning Betti numbers.

\begin{dfn}
Since $\L$ is a finite-dimensional algebra, every finitely generated
left $\L$-module admits a projective cover, and hence a minimal projective
resolution; see \cite[Section 4]{I} for instance.
Let $M\in\mod\L$, and let $Q$ be a minimal projective resolution of $M$ with $Q_n\cong\bigoplus_{i=1}^rP_i^{\oplus\beta_{n,i}(M)}$.
We call
$$
\boldsymbol{\beta}_n(M)=(\beta_{n,1}(M),\ldots,\beta_{n,r}(M))^{\T}, \ \text{and} \ \beta_n(M)=\sum_{i=1}^r\beta_{n,i}(M)
$$
the $n$th \textit{Betti vector} of $M$, and the $n$th \textit{Betti number} of $M$, respectively.
We say that $M$ has \textit{bounded Betti numbers} if the sequence $(\beta_n(M))_{n\ge0}$ is bounded. We denote by $\BB(\L)$ the subcategory of $\mod\L$ consisting of modules having bounded Betti numbers.
\end{dfn}

The first step is to identify the part of the module category in which proper thick subcategories can live. At the critical value $\lambda=2$, boundedness of the Betti numbers gives precisely this categorical boundary.

\begin{prop}\label{bounded betti thick}
$\BB(\L)$ is the unique maximal proper thick subcategory of $\mod\L$ containing $\L$.
\end{prop}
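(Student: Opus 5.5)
Three things have to be shown: $\BB(\L)$ is thick and contains $\L$; it is proper; and every proper thick subcategory $\X\supseteq\L$ satisfies $\X\subseteq\BB(\L)$.

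\emph{Thickness.} Clearly $\L\in\BB(\L)$, since its Betti numbers vanish in positive degrees. For a direct summand $M$ of $N$, a minimal resolution of $M$ is a summand of one of $N$, so $\beta_n(M)\le\beta_n(N)$. For a short exact sequence $0\to A\to B\to C\to0$, I would use
$$\beta_n(M)=\dim_k\Ext^n_\L(M,\L/J)$$
together with the long exact Ext sequence. This gives
\begin{align*}
\beta_n(B)&\le\beta_n(A)+\beta_n(C),\\
\beta_n(C)&\le\beta_n(B)+\beta_{n-1}(A),\\
\beta_n(A)&\le\beta_n(B)+\beta_{n+1}(C),
\end{align*}
so boundedness of any two of the three forces boundedness of the third.

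\emph{Properness.} It suffices to find a module in $\mod\L$ with unbounded Betti numbers. A simple module $S_i$ has $J^2S_i=0$, and we grade it in degree $0$. Its minimal resolution is linear: the successive graded syzygies are concentrated in two consecutive degrees, and the recurrence in the proof of Lemma~\ref{syzygy functors} applies without assuming membership in $\A_\X$. Starting from $(\bft,\bfs)=(e_i,0)$, the Betti vectors $\bfq_n$ satisfy $\bfq_{n+1}=E\bfq_n-\bfq_{n-1}$ as long as they stay nonnegative, which they do as syzygies. Pairing with the Perron vector $\bfv$ gives $x_{n+1}+x_{n-1}=2x_n$, hence $x_n=x_0+n(x_1-x_0)$. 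Here $x_1-x_0=\bfv^{\T}Ee_i-\bfv^{\T}e_i=(2-1)v_i>0$, so the Betti numbers grow linearly and $S_i\notin\BB(\L)$. (Alternatively one can invoke Benson's growth result, but this direct computation is self-contained.)

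\emph{Maximality.} This is the main point. Let $\X\supsetneq\add\L$ be proper and thick, and let $M\in\X$. I may assume $M$ is indecomposable and nonprojective. By Lemma~\ref{basic properties2}(2)(3), $M$ is graded in degrees $0,1$, so $M\in\A_\X$. By Lemma~\ref{syzygy functors}, $\Omega^nM\in\A_\X$ for all $n\in\Z$, and $\bft(\Omega^{n}M)$, $\bfs(\Omega^nM)$ follow the bi-infinite recurrence; by Lemma~\ref{thick to abelian}(2)(iii) these vectors are nonzero. Lemma~\ref{two sided zenkashiki} with $\lambda=2$ then shows that only finitely many vectors occur. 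Finally I need that the ungraded minimal syzygy of $M$ is the underlying module of $\Omega M$, with $\boldsymbol\beta_n(M)=\bft(\Omega^nM)$. This holds because $\varepsilon_M$ is a projective cover: it is an isomorphism in degree $0$, and $M_1=\rad M$. Hence the Betti numbers of $M$ are bounded and $\X\subseteq\BB(\L)$.

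The step I expect to need the most care is this last identification of graded $\Omega$ with the minimal syzygy. It requires that $\Omega M$ has no projective summand, which holds since $\Omega M$ is concentrated in degrees $0,1$ and $J^2\ne0$.
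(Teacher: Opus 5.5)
Your thickness and maximality steps are essentially the paper's. You use $\Ext^n_\L(-,\L/J)$ where the paper uses $\Tor_n^\L(\L/J,-)$. One small correction: $\dim_k\Ext^n_\L(M,\L/J)=\sum_i(\dim_k S_i)\beta_{n,i}(M)$, not $\beta_n(M)$, unless $\L$ is basic. This is harmless for boundedness.

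The gap is in the properness step. You assert that the minimal resolution of $S_i$ is linear, i.e.\ that $\boldsymbol\beta_{n+1}(S_i)=E\boldsymbol\beta_n(S_i)-\boldsymbol\beta_{n-1}(S_i)$ holds exactly. Your justification is that the recurrence from the proof of Lemma~\ref{syzygy functors} ``applies without assuming membership in $\A_\X$''. It does not:
\begin{itemize}
\item That recurrence only computes the dimension vector of the graded kernel of $\varepsilon_M$.
\item The next Betti vector equals $\bft$ of that kernel only if the kernel is generated in its lowest degree. This means $M_1=JM_0$ must hold at every stage.
\item In the paper, this condition comes from Lemma~\ref{thick to abelian}(2)(iii), i.e.\ from $M\in\A_\X$ with $\X$ proper, which rules out simple summands. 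Syzygies of a simple module have no such guarantee a priori.
\end{itemize}
Nonnegativity is not the real issue. For $\L=k[x]/(x^3)$ one gets $\Omega^2k\cong k$ sitting in degree $3$. So $\beta_2(k)=1$ while $E\beta_1-\beta_0=0$, even though everything stays nonnegative.

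For $\lambda=2$ linearity does in fact hold, but it needs an argument. One option is induction. If the resolution is linear through step $n$, then $x_m=(m+1)v_i$ for $m\le n$. The next graded syzygy $K$ is indecomposable, being a syzygy of an indecomposable module over a self-injective algebra, and satisfies $J^2K=0$. By Lemma~\ref{basic properties2}(4), $K$ can fail to be generated in its lowest degree only if $K$ is simple. That would force $\bft(K)=E\boldsymbol\beta_n-\boldsymbol\beta_{n-1}=0$, hence $2x_n=x_{n-1}$, which is impossible.

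The paper avoids linearity altogether. From the exact sequence $Q_{n+1}/JQ_{n+1}\to JQ_n/J^2Q_n\to J^2Q_{n-1}$ it extracts only the componentwise inequality $\boldsymbol\beta_{n+1}\ge E\boldsymbol\beta_n-\boldsymbol\beta_{n-1}$. Pairing with $\bfv$ gives $x_{n+1}-x_n\ge x_1-x_0=v_i>0$, hence linear growth, without controlling the shape of the syzygies.

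Either repair closes the gap. The rest of your argument is fine, including the identification $\boldsymbol\beta_n(M)=\bft(\Omega^nM)$ via $\varepsilon_{\Omega^nM}$ being a projective cover.
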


\begin{proof}
Let $A=\L/J$ and $M\in\mod\L$. 
A minimal projective resolution $Q=(\cdots\to Q_2\xrightarrow{d_2} Q_1\xrightarrow{d_1} Q_0\to 0)$ of $M$ satisfies $\image d_n\subseteq\rad Q_{n-1}=JQ_{n-1}$ for all $n\ge 1$, and hence the differentials of $A\otimes_\L Q$ are zero; see \cite[Section 4]{I} for instance.
This implies
$$
\Tor_n^\L(A,M)\cong A\otimes_\L Q_n\cong\bigoplus_{i=1}^r(A\otimes_\L P_i)^{\oplus\beta_{n,i}(M)} \ \text{and}\  \dim_k\Tor_n^\L(A,M)=\sum_{i=1}^r(\dim_kS_i)\beta_{n,i}(M).
$$
So the boundedness of $\dim_k\Tor_n^\L(A,M)$ is equivalent to that of $\beta_n(M)$. Therefore the long exact sequence of $\Tor^\L(A,-)$ associated to a short exact sequence shows that boundedness of the Betti numbers satisfies the two-out-of-three property. It is also preserved under direct summands. Thus $\BB(\L)$ is thick.

Fix $1\le i\le r$, and using the notation of the preceding paragraph, set $M=M_0=S_i$ and $M_n=\image d_n$ for $n\ge1$.
Since $M_{n+1}\subseteq JQ_n$, restricting the exact sequence $Q_{n+1}\xrightarrow{d_{n+1}} Q_n\xrightarrow{d_n} Q_{n-1}$, we obtain an exact sequence
$$
Q_{n+1}\xrightarrow{d_{n+1}} JQ_n\xrightarrow{d_n\mid_{J Q_n}} J^2Q_{n-1}
$$ 
which induces the exact sequence $Q_{n+1}/J Q_{n+1}\to J Q_n/J^2Q_n\to J^2Q_{n-1}$ as $J^3=0$. One has
\begin{align*}
\beta_{n+1,j}(S_i)=\dim_k e_j (Q_{n+1}/J Q_{n+1})&\ge\dim_k e_j (J Q_n/J^2Q_n)-\dim_k e_j J^2Q_{n-1}\\ &=\sum_{h=1}^r E_{j h} \beta_{n,h}(S_i)-\beta_{n-1,j}(S_i).
\end{align*}
So each component of $\boldsymbol{\beta}_{n+1}(S_i)$ is more than or equal to that of $E\boldsymbol{\beta}_n(S_i)-\boldsymbol{\beta}_{n-1}(S_i)$.
Let $\bfv\in\mathbb{R}_{>0}^r$ be as in Lemma \ref{zenkashki}(1) and set $x_n=\bfv^{\T}\boldsymbol{\beta}_n(S_i)$.
As all entries of $\bfv$ are positive, $E$ is symmetric, and $\lambda=2$, 
$$
x_{n+1}=\bfv^{\T}\boldsymbol{\beta}_{n+1}(S_i)\ge \bfv^{\T}(E\boldsymbol{\beta}_n(S_i)-\boldsymbol{\beta}_{n-1}(S_i))=(E\bfv)^{\T}\boldsymbol{\beta}_n(S_i)-\bfv^{\T}\boldsymbol{\beta}_{n-1}(S_i)=2x_n-x_{n-1}
$$
holds. This shows that $x_{n+1}-x_n$ is nondecreasing in $n$.
Now $x_0=\bfv^{\T}\boldsymbol{\beta}_0(S_i)=v_i$ and $x_1=\bfv^{\T}\boldsymbol{\beta}_1(S_i)=\bfv^{\T}\boldsymbol{\beta}_0(J e_i)=\bfv^{\T}E\boldsymbol{\beta}_0(S_i)=2v_i$, where $v_i$ is the $i$-th component of $\bfv$.
Thus $x_{n+1}-x_n\ge x_1-x_0=v_i>0$ for every $n\ge0$, and hence
$$
\max\{v_1,\ldots,v_r\}\beta_n(S_i)\ge x_n\ge(n+1)v_i.
$$
Thus $S_i\notin\BB(\L)$, and consequently $\BB(\L)\subsetneq\mod\L$.

Finally, we show the maximality.
Let $\X$ be a proper thick subcategory of $\mod\L$ containing $\L$.
Let $M\in\X$, and write $M=P\oplus N$, where $P$ is projective and $N$ has no nonzero projective direct summand. We prove $N\in\BB(\L)$. If $N=0$, there is nothing to prove; otherwise, by Lemma \ref{basic properties2}(2)--(3), $N$ admits a grading such that it belongs to $\A_\X$ in Lemma \ref{thick to abelian}(2).
Lemmas \ref{syzygy functors} and \ref{thick to abelian}(2)(iii) give a bi-infinite sequence $(\bfq_m)_{m\in\Z}$ of nonzero vectors in $\Z_{\ge0}^r$ satisfying $\bfq_{m+1}=E\bfq_m-\bfq_{m-1}$ and $\bfq_{-n}=\boldsymbol{\beta}_n(N)$ for every $m\in\Z$ and $n\ge0$.
Lemma \ref{two sided zenkashiki} shows that the set $\{\boldsymbol{\beta}_n(N)\mid n\ge 0\}\subseteq\{\bfq_m\mid m\in\Z\}$ is finite, and thus $N$ belongs to $\BB(\L)$.  
\end{proof}

Proposition \ref{bounded betti thick} gives a categorical counterpart of Benson's bounded/linear growth dichotomy at spectral radius two \cite{Ben}: the modules of bounded growth form the unique maximal proper thick subcategory containing $\L$, whereas $\thick_{\mod\L}\{\L,M\}=\mod\L$ for every module $M$ of linear growth.
Thus the classification of proper thick subcategories is reduced to the bounded growth part $\BB(\L)$.

\begin{dfn}
Hereafter, for simplicity of notation, we write 
$$
\A:=\A_{\BB(\L)}.
$$
By Proposition \ref{bounded betti thick}, Lemma \ref{thick to abelian}(2), and Lemma \ref{syzygy functors}, the category $\A$ is an abelian category of finite length and $\Omega$ is an exact autoequivalence of $\A$ with inverse $\Omega^{-1}$. Of course, $\A$ depends on the fixed grading of $\L$, but it is unique up to equivalence as an abelian category; see Remark \ref{unique grading}.
\end{dfn}

We next give a numerical characterization of the indecomposable objects of $\A$. It also shows that, although the modules themselves need not be periodic under syzygy, their dimension vectors are periodic.

\begin{lem}\label{periodic}
Let $\bfv\in\mathbb R_{>0}^r$ satisfy $E\bfv=2\bfv$, and let $0\ne M=M_0\oplus M_1$ be an indecomposable graded $\L$-module concentrated in degrees zero and one.
Then $M\in\A$ if and only if $\bfv^{\T}\bft(M)=\bfv^{\T}\bfs(M)$.
If these equivalent conditions hold, then there exists $p>0$ such that $\dimvec(\Omega^n M)=\dimvec(\Omega^{n+p} M)$ for all $n\in\Z$.
\end{lem}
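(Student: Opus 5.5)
The plan is to handle the two implications separately, using the weight $x(N)=\bfv^{\T}\bft(N)$, $y(N)=\bfv^{\T}\bfs(N)$ for graded modules $N$ concentrated in degrees zero and one, together with the defect $\delta(N)=x(N)-y(N)$. The defect is additive on direct sums. The periodicity statement will then come out of the forward implication.

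\emph{($\Rightarrow$) and periodicity.} Suppose $M\in\A$. By Proposition \ref{bounded betti thick} we have $\BB(\L)\ne\mod\L$, so Lemma \ref{thick to abelian}(2)(iii) and Lemma \ref{syzygy functors} apply to $\X=\BB(\L)$. Setting $\bfq_m=\bft(\Omega^{-m}M)$, we get a bi-infinite sequence of nonzero vectors in $\Z_{\ge0}^r$ with $\bfq_{m+1}=E\bfq_m-\bfq_{m-1}$, $\bfq_0=\bft(M)$ and $\bfq_1=\bfs(M)$. The proof of Lemma \ref{two sided zenkashiki} with $\lambda=2$ shows that $\bfv^{\T}\bfq_m$ is constant in $m$. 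In particular $\bfv^{\T}\bft(M)=\bfv^{\T}\bfs(M)$.

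For periodicity, note that $\dimvec(\Omega^nM)=(\bfq_{-n}\mid\bfq_{-n+1})$. By Lemma \ref{two sided zenkashiki} only finitely many pairs $(\bfq_m,\bfq_{m+1})$ occur. Pick $m<m'$ with equal pairs and set $p=m'-m$. The recurrence determines the sequence uniquely from any consecutive pair, in both directions, since $\bfq_{m-1}=E\bfq_m-\bfq_{m+1}$. Hence $\bfq_{m}=\bfq_{m+p}$ for all $m\in\Z$, which gives the claim.

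\emph{($\Leftarrow$).} Suppose $M$ is indecomposable graded with $\delta(M)=0$. First I classify the indecomposable graded modules $L$ concentrated in degrees zero and one.
\begin{enumerate}[\rm (a)]
\item If $L_1=0$, then $L$ is semisimple in degree zero and $\delta(L)>0$.
\item If $L_0=0$, then $L$ is semisimple in degree one and $\delta(L)<0$.
\item Otherwise $L_1=JL_0$. Indeed, choose an $\L_0$-complement $N$ of $JL_0$ in $L_1$. Since $JL_1=0$, the decomposition $L=(L_0\oplus JL_0)\oplus N$ is a decomposition in $\gmod\L$, and indecomposability forces $N=0$.
\end{enumerate}
Since $\delta(M)=0$, $M$ is of type (c), so $M$ has no simple summand and no projective summand, and $M_1=\rad M=JM_0$. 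The construction in the proof of Lemma \ref{syzygy functors} still makes sense for such an $M$. Namely, $\Omega M:=(\ker\varepsilon_M)(1)$ is concentrated in degrees zero and one and is the (graded) minimal first syzygy of $M$. Its dimension vector is $(E\bft-\bfs\mid\bft)$, so
\[
\delta(\Omega M)=(2x(M)-y(M))-x(M)=\delta(M)=0 .
\]

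Next I claim that $\Omega M$ is indecomposable in $\gmod\L$. The graded algebra $\L$ is graded self-injective by Lemma \ref{Matlis duality}, so the graded syzygy is an equivalence on the graded stable category. Moreover $M$ has no projective summand and $\ker\varepsilon_M\subseteq\rad P(\bft)$, so $\Omega M$ has no projective summand either. Hence $\Omega M$ is indecomposable, and by the classification above it is again of type (c). By induction, every graded syzygy $\Omega^nM$ is indecomposable of type (c) with $\delta=0$. The Betti vectors of $M$ are $\boldsymbol\beta_n(M)=\bft(\Omega^nM)$, and their weights $x_n$ satisfy $x_{n+1}=2x_n-x_{n-1}$ with $x_1=x(\Omega M)=x_0$ (using $\bfs(\Omega M)=\bft(M)$ and $\delta(\Omega M)=0$). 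Therefore $x_n$ is constant, the Betti numbers of $M$ are bounded, and $M\in\A$.

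\emph{Main obstacle.} The delicate step is the indecomposability of $\Omega M$. It requires checking that graded $\Omega$ reflects indecomposability for modules without projective summands, via the graded stable category of the graded Frobenius algebra $\L$ from Lemma \ref{Matlis duality}. If that route proves awkward, I would argue directly: a nontrivial graded decomposition of $\Omega M$ pushes out, through graded injective envelopes as in the essential surjectivity part of Lemma \ref{syzygy functors}, to a nontrivial decomposition of $M$.
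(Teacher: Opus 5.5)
Your proof is correct and follows essentially the same route as the paper. The forward direction comes from the constancy of $\bfv^{\T}\bft(\Omega^nM)$, forced by the two-sided recurrence and positivity. Periodicity then follows from finiteness together with the fact that a consecutive pair determines the whole sequence. The converse iterates the normalized minimal syzygy, which preserves $\bfv^{\T}\bft=\bfv^{\T}\bfs$ and indecomposability, to bound the Betti numbers.

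Your case analysis giving $M_1=JM_0$ and your graded Frobenius argument for the indecomposability of $\Omega M$ just spell out steps that the paper compresses into an appeal to Lemma \ref{basic properties2}(4) and to the self-injectivity of $\L$.
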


\begin{proof}
Suppose first that $M\in\A$.
Put $\bft_n=\bft(\Omega^nM)$, $\bfs_n=\bfs(\Omega^nM)$, and $x_n=\bfv^{\T}\bft_n$.
Lemma \ref{syzygy functors} gives
$$
\bft_{n+1}=E\bft_n-\bfs_n, \quad \bfs_{n+1}=\bft_n.
$$
Applying $\bfv^{\T}(-)$ to the former gives $x_{n+1}=2x_n-x_{n-1}$ for every $n\in\Z$.
Therefore $c:=x_n-x_{n-1}$ is constant on $n$ and we have $x_n=x_0+nc$.
Since $\Omega^nM$ is a nonzero object of $\A$ for every $n\in\Z$, Lemma \ref{thick to abelian}(2)(iii) gives $\bft_n\ne0$. Hence $x_n>0$ for every $n\in\Z$, which forces $c=0$. Thus
$$
\bfv^{\T}\bft(M)=x_0=x_{-1}=\bfv^{\T}\bfs(M).
$$
This proves the ``only if'' part. Before proving the converse, we establish the latter assertion of the lemma.
Since every entry of $\bfv$ is positive and $\bfv^{\T}\bft_n=x_n=x_0$ for all $n$, every entry of every $\bft_n$ is bounded.
The same is true for $\bfs_n=\bft_{n-1}$.
Hence only finitely many pairs $\dimvec(\Omega^n M)=(\bft_n\mid\bfs_n)$ occur. 
In particular, $\dimvec(\Omega^n M)=\dimvec(\Omega^{n+p} M)$ holds for some $p>0$ and $n\in\Z$. By Lemma \ref{syzygy functors}, both $\Omega$ and $\Omega^{-1}$ preserve equality with respect to $\dimvec$, and hence the desired assertion follows.

Conversely, suppose that $a:=\bfv^{\T}\bft(M)=\bfv^{\T}\bfs(M)$.
Since $\bfv\in\mathbb R_{>0}^r$ and $M\ne 0$, both $\bft(M)$ and $\bfs(M)$ are nonzero, and thus $M$ is not simple.
Since $M$ is concentrated in degrees zero and one, one has $J^2M=0$ and
$\rad M\subseteq M_1\subseteq\soc M$.
As $M$ is indecomposable, Lemma \ref{basic properties2}(4) gives $M_1=\rad M=\soc M$.
Thus the projective cover construction in the proof of Lemma \ref{syzygy functors} applies to $M$ and gives a nonzero normalized minimal syzygy, concentrated in degrees zero and one, with dimension vector $(E\bft-\bfs\mid\bft)$.
Moreover,
$$
\bfv^{\T}(E\bft-\bfs)=2a-a=a=\bfv^{\T}\bft.
$$
Since $\L$ is self-injective, a minimal syzygy of an indecomposable module is again indecomposable. Thus the same argument can be iterated, which means that $\bfv^{\T}\boldsymbol{\beta}_n(M)=a$ for every $n\ge 0$.
We obtain
$$
\min\{v_1,\ldots,v_r\}\beta_n(M)
\le \bfv^{\T}\boldsymbol{\beta}_n(M)
=a
$$
for every $n\ge0$.
Hence $M$ has bounded Betti numbers, and therefore $M\in\A$.
\end{proof}

We record the only numerical calculation needed for the classification.
In the proof of Theorem \ref{more than two}, the existence of a nonzero morphism was deduced from a numerical calculation of the dimension of Hom spaces.
Similarly, in the following lemma and proposition, numerical calculations of the dimensions of Hom and Ext spaces will be used to determine whether they vanish or not.

\begin{lem}\label{Ext calculation}
Let $M,N\in\A$, and write $\bft=\bft(M)$, $\bfs=\bfs(M)$, $\bfp=\bft(N)$, $\bfq=\bfs(N)$.
Then
$$
\dim_k\gHom_\L(M,N)-\dim_k\Ext_\L^1(M,N)_0=\bft^{\T}\bfp+\bfs^{\T}\bfq-\bft^{\T}E\bfq.
$$
\end{lem}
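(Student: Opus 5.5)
We compute both terms from the graded projective presentation of $M$ given by Lemma \ref{syzygy functors}. Splicing the defining sequence $0\to(\Omega M)(-1)\to P(\bft)\to M\to0$ with the analogous sequence for $\Omega M$ gives the start of a graded projective resolution
$$
P(\bft(\Omega^2M))(-2)\to P(\bfs(\Omega M))(-1)=P(E\bft-\bfs)(-1)\to P(\bft)\to M\to0,
$$
where $\bft(\Omega M)=E\bft-\bfs$. For the moment write the middle term as $P(\bfu)(-1)$ with $\bfu=E\bft-\bfs$. The plan is to apply $\gHom_\L(-,N)$ and read off degree-zero Hom and $\Ext^1$.

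First, the basic count. For a graded projective $P_i(-d)$ we have $\gHom_\L(P_i(-d),N)\cong e_iN_d$. Since $N$ is concentrated in degrees $0,1$, this gives $\dim_k\gHom_\L(P(\bft),N)=\bft^{\T}\bfp$ and $\dim_k\gHom_\L(P(\bfu)(-1),N)=\bfu^{\T}\bfq$. Also $\gHom_\L(P(\cdot)(-2),N)=0$ because $N_2=0$. So the complex computing $\Ext^\bullet_\L(M,N)_0$ in degrees $0,1,2$ is
$$
0\to\gHom_\L(M,N)\to\gHom_\L(P(\bft),N)\to\gHom_\L(P(\bfu)(-1),N)\to 0.
$$
Exactness at the first two spots comes from left exactness of Hom. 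The cokernel of the last map is $\Ext^1_\L(M,N)_0$, since the next term of the cochain complex vanishes.

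Second, take the alternating sum of dimensions:
$$
\dim\gHom_\L(M,N)-\dim\Ext^1_\L(M,N)_0=\bft^{\T}\bfp-\bfu^{\T}\bfq.
$$
Substituting $\bfu=E\bft-\bfs$ and using that $E$ is symmetric (Lemma \ref{basic properties1}(3)) turns this into $\bft^{\T}\bfp+\bfs^{\T}\bfq-\bft^{\T}E\bfq$, which is the claimed identity.

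The main point to check is that the spliced sequence is genuinely the beginning of a graded projective resolution with the stated grading shifts, so that it computes the graded $\Ext$ in degree zero. The resolution need not be minimal; any graded projective resolution computes graded Ext. The shifts are read off from Lemma \ref{syzygy functors}: $(\Omega M)(-1)$ is covered by $P(\bft(\Omega M))(-1)$, with kernel $(\Omega^2M)(-2)$. The other point is that the third term lies in degree $\geq 2$, which makes the $\Ext^1$ computation a plain cokernel. Both facts hold because $\Omega M\in\A$ again, and the degree-2 vanishing holds because $N_2=0$.
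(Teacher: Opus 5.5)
Your argument is correct and is essentially the paper's. The paper applies $\Hom_\L(-,N)$ to the single sequence $0\to(\Omega M)(-1)\to P(\bft)\to M\to0$ and counts $\dim_k\gHom_\L((\Omega M)(-1),N)=(E\bft-\bfs)^{\T}\bfq$ via $\L_0$-linear maps $(\Omega M)_0\to N_1$. You instead resolve one step further and use $N_2=0$ to kill the next term, which is the same bookkeeping.

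One slip: the middle term of your resolution should be $P(\bft(\Omega M))(-1)$, not $P(\bfs(\Omega M))(-1)$, since $\bfs(\Omega M)=\bft$. Your final paragraph already uses the correct $\bft(\Omega M)$.
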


\begin{proof}
The first exact sequence in Lemma \ref{syzygy functors} gives $0\to (\Omega M)(-1)\to P(\bft)\to M\to0$. 
Applying $\Hom_\L(-,N)$ and taking the degree-zero part gives an exact sequence
$$
0\to\gHom_\L(M,N)\to\gHom_\L(P(\bft),N)
\to\gHom_\L((\Omega M)(-1),N)\to\Ext_\L^1(M,N)_0\to0.
$$
One has 
$$
\dim_k\gHom_\L(P(\bft),N)=\bft^{\T}\bfp.
$$
Moreover, $\bft(\Omega M)=E\bft-\bfs$ by Lemma \ref{syzygy functors}.  A degree-zero morphism $(\Omega M)(-1)\to N$ is uniquely determined by an $\L_0$-linear map $(\Omega M)_0\to N_1$. Since $\dim_k \Hom_{\L_0}(S_i, S_j)=\delta_{ij}$ for every $i,j$, we have
$$
\dim_k\gHom_\L((\Omega M)(-1),N)=\dim_k\gHom_{\L_0}((\Omega M)_0,N_1)=(E\bft-\bfs)^{\T}\bfq.
$$
The asserted equality follows from the symmetry of $E$.
\end{proof}

\begin{prop}\label{simple orthogonal}
Let $S,T$ be simple objects of $\A$.  If $\Ext_\L^1(S,T)_0\ne0$ or $\Ext_\L^1(T,S)_0\ne0$, then $T\cong \Omega^nS$ for some $n\in\Z$.
\end{prop}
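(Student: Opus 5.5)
The plan is to argue by contradiction, averaging the Euler-form identity of Lemma \ref{Ext calculation} over an $\Omega$-period of dimension vectors. Assume that $T\not\cong\Omega^nS$ for every $n\in\Z$; we show that $\Ext_\L^1(S,T)_0=0$. By symmetry of the hypotheses (exchange $S$ and $T$), the same argument then gives $\Ext_\L^1(T,S)_0=0$.

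Since $\Omega$ is an exact autoequivalence of $\A$, each $\Omega^nS$ is a simple object of $\A$. By hypothesis it is not isomorphic to $T$, so Schur's lemma in the abelian category $\A$ gives $\gHom_\L(\Omega^nS,T)=0$ for all $n\in\Z$. Since $\A$ is a full subcategory of $\gmod\L$, this is the graded Hom we need. Apply Lemma \ref{Ext calculation} with $M=\Omega^nS$ and $N=T$. Write $\bft_n=\bft(\Omega^nS)$, and let $\bfp=\bft(T)$, $\bfq=\bfs(T)$. Lemma \ref{syzygy functors} gives $\bfs(\Omega^nS)=\bft_{n-1}$ and $E\bft_n=\bft_{n+1}+\bft_{n-1}$. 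Substituting these and using the symmetry of $E$, the right-hand side of Lemma \ref{Ext calculation} collapses to
\[
\bft_n^{\T}\bfp+\bft_{n-1}^{\T}\bfq-(\bft_{n+1}+\bft_{n-1})^{\T}\bfq=\bft_n^{\T}\bfp-\bft_{n+1}^{\T}\bfq .
\]
Hence $-\dim_k\Ext_\L^1(\Omega^nS,T)_0=\bft_n^{\T}\bfp-\bft_{n+1}^{\T}\bfq$ for every $n$.

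Next we sum over a period. A simple object of $\A$ is indecomposable in $\gmod\L$, because $\BB(\L)$ is closed under direct summands, so any graded summand of $S$ lies in $\A$. Thus Lemma \ref{periodic} applies to $S$ and provides $p>0$ with $\bft_{n+p}=\bft_n$ for all $n$. Set $\bfu=\sum_{n=0}^{p-1}\bft_n$. Summing the displayed identity over $n=0,\dots,p-1$ gives
\[
-\sum_{n=0}^{p-1}\dim_k\Ext_\L^1(\Omega^nS,T)_0=\bfu^{\T}(\bfp-\bfq).
\]
Summing the recurrence $\bft_{n+1}+\bft_{n-1}=E\bft_n$ over one period gives $2\bfu=E\bfu$. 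By Lemma \ref{zenkashki}(1) the eigenvalue $2$ has multiplicity one, so $\bfu=c\bfv$ for some $c\in\mathbb R$ and the Perron vector $\bfv$. Applying Lemma \ref{periodic} to the indecomposable object $T\in\A$ gives $\bfv^{\T}\bfp=\bfv^{\T}\bfq$, so the right-hand side vanishes. The left-hand side is a sum of nonpositive terms, so every term is zero; the term $n=0$ is $\Ext_\L^1(S,T)_0=0$, as required.

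The step needing the most care is the bookkeeping that turns the expression of Lemma \ref{Ext calculation} into a telescoping-type form whose sum over a period only sees the $\Omega$-invariant vector $\bfu$. The key observation is that $\bfu$ is forced into the one-dimensional $2$-eigenspace spanned by $\bfv$. Once that is in place, the numerical criterion of Lemma \ref{periodic} applied to $T$ kills the sum, and the Hom-vanishing from simplicity finishes the argument.
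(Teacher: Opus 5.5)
Your argument is correct. It shares the paper's skeleton: assume $T$ lies outside the $\Omega$-orbit of $S$, use Schur's lemma to kill the Hom terms, and sum Lemma \ref{Ext calculation} over a period $p$ of $\dimvec(\Omega^iS)$ so that a sum of nonnegative Ext dimensions is forced to vanish. The difference is in how that vanishing is obtained.

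The paper sums both directions $\Ext_\L^1(\Omega^iS,T)_0$ and $\Ext_\L^1(T,\Omega^iS)_0$ at once. With $\bfu=\sum_i\bft_i=\sum_i\bfs_i$ and $E\bfu=2\bfu$, the combined right-hand side cancels identically. So only the periodicity of $S$ is needed, and both Ext groups are handled in one stroke.

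You sum a single direction and telescope it to $\bfu^{\T}(\bfp-\bfq)$. You then need two extra inputs:
\begin{enumerate}
\item[(a)] the $2$-eigenspace of $E$ is the line $\mathbb{R}\bfv$, which is the multiplicity-one statement of Lemma \ref{zenkashki}(1);
\item[(b)] the balance condition $\bfv^{\T}\bft(T)=\bfv^{\T}\bfs(T)$, from Lemma \ref{periodic} applied to $T$.
\end{enumerate}
For (b) you correctly check that simple objects of $\A$ are indecomposable in $\gmod\L$, which the paper uses tacitly. The reverse Ext direction then follows by swapping $S$ and $T$.

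The paper's symmetrization is more economical. Your route gives a more informative identity: the one-sided orbit sum $\sum_{n=0}^{p-1}\dim_k\Ext_\L^1(\Omega^nS,T)_0$ is a fixed multiple, depending only on $S$, of $\bfv^{\T}\bft(T)-\bfv^{\T}\bfs(T)$. This is the defect of $T$ in the sense of Remark \ref{hereditary}, and the sum vanishes precisely because $T$ is regular.
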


\begin{proof}
Set $\bft_i=\bft(\Omega^iS)$, $\bfs_i=\bfs(\Omega^iS)$, $\bfp=\bft(T)$ and $\bfq=\bfs(T)$.
By Lemma \ref{periodic}, there is $p>0$ such that $\dimvec(\Omega^pS)=\dimvec S$.
Lemma \ref{syzygy functors} gives $\bfs_{i+1}=\bft_i$ and $\bft_{i+1}=E\bft_i-\bfs_i$.
Summing these equalities for $i=0,\ldots,p-1$ and using $\dimvec(\Omega^pS)=\dimvec S$, we obtain
\begin{equation}\label{about u}
\sum_{i=0}^{p-1}\bfs_i=\sum_{i=0}^{p-1}\bft_i=:\bfu, \quad E\bfu=2\bfu. 
\end{equation}
Suppose that $\Omega^i S\not\cong T$ for every $i$.
As $\Omega^i$ are exact autoequivalences of $\A$, $\Omega^i S$ are simple objects of $\A$ and hence $\gHom_\L(\Omega^iS,T)=0=\gHom_\L(T,\Omega^iS)$.
Lemma \ref{Ext calculation} gives
$$
-\dim_k\Ext_\L^1(\Omega^iS,T)_0=\bft_i^{\T}\bfp+\bfs_i^{\T}\bfq-\bft_i^{\T}E\bfq, \quad
-\dim_k\Ext_\L^1(T, \Omega^iS)_0=\bfp^{\T}\bft_i+\bfq^{\T}\bfs_i-\bfp^{\T}E\bfs_i.
$$
Summing both equalities for $i=0,\ldots,p-1$, together with the symmetry of $E$ and \eqref{about u}, we obtain
\begin{align*}
&-\sum_{i=0}^{p-1}\left(\dim_k\Ext_\L^1(\Omega^iS,T)_0+\dim_k\Ext_\L^1(T,\Omega^iS)_0
\right)\\
&=\sum_{i=0}^{p-1} \left(\bft_i^{\T}\bfp+\bfs_i^{\T}\bfq-\bft_i^{\T}E\bfq\right) +\sum_{i=0}^{p-1}\left(\bfp^{\T}\bft_i+\bfq^{\T}\bfs_i-\bfp^{\T}E\bfs_i\right)\\
&=(\bfu^{\T}\bfp+\bfu^{\T}\bfq-\bfu^{\T}E\bfq)+(\bfp^{\T}\bfu+\bfq^{\T}\bfu-\bfp^{\T}E\bfu)=\bfu^{\T}\bfp+\bfu^{\T}\bfq-2\bfu^{\T}\bfq+\bfp^{\T}\bfu+\bfq^{\T}\bfu-2\bfp^{\T}\bfu=0.
\end{align*}
This is impossible, since all the summands are nonnegative and, by assumption, at least one of the two summands for $i=0$ is positive. Hence $T\cong\Omega^nS$ for some $n\in\Z$.
\end{proof}

This proposition shows that simple objects should be distinguished according to whether or not they are related by syzygies.
Moreover, if two simple objects are orthogonal, then the modules obtained from them by successive extensions are also orthogonal. Thus, the orthogonality extends to the level of the Serre subcategories they generate.

\begin{dfn}
Let $\C$ be an abelian category.
We say that a subcategory $\X$ of $\C$ is a {\em Serre subcategory} if it is closed under subobjects, quotient objects, and extensions.
The \textit{Serre closure} of a subcategory $\X$ of $\C$ is defined to be the smallest Serre subcategory containing $\X$, and denoted $\Serre_{\C}(\X)$.
Let $\operatorname{Sim}(\A)$ denote the set of isomorphism classes of simple objects of $\A$. Since $\Omega$ is an exact autoequivalence of $\A$, it induces a permutation of $\operatorname{Sim}(\A)$. We call the orbits of this permutation the \textit{$\Omega$-orbits}. Thus two simple objects $S,T$ belong to the same $\Omega$-orbit if and only if $T\cong\Omega^nS$ for some $n\in\mathbb Z$.
\end{dfn}

\begin{lem}\label{orbit decomposition}
The following statements hold.
\begin{enumerate}[{\rm (1)}]
\item If $\O$ and $\O'$ are distinct $\Omega$-orbits and $M\in\Serre_{\A}(\O)$, $N\in\Serre_{\A}(\O')$, then $$\gHom_\L(M,N)=0=\Ext_\L^1(M,N)_0.$$
\item Every object $M\in\A$ admits a decomposition
$$
M\cong M_{\O_1}\oplus\cdots\oplus M_{\O_n}
$$
for some $n\ge0$ and some $\Omega$-orbits $\O_1,\ldots,\O_n$, where $M_{\O_i}\in\Serre_{\A}(\O_i)$ for each $1\le i\le n$.
\end{enumerate}
\end{lem}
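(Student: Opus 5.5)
For (1), I will argue by induction on the lengths of $M$ and $N$ in the finite length category $\A$. Serre subcategories of a finite length category are determined by their simple objects. Concretely, $\Serre_\A(\O)$ consists exactly of the objects all of whose composition factors lie in $\O$: this class contains $\O$, is closed under subobjects, quotients and extensions, and is contained in any Serre subcategory containing $\O$.

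The base case is $M=S$ and $N=T$ simple with $S\in\O$, $T\in\O'$. Since $S\not\cong T$, we get $\gHom_\L(S,T)=0$. Proposition \ref{simple orthogonal} gives $\Ext_\L^1(S,T)_0=0$, because otherwise $T\cong\Omega^nS$ and the orbits would coincide.

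For the induction step, take a short exact sequence $0\to M'\to M\to M''\to0$ in $\A$ with $M',M''$ of smaller length and lying in $\Serre_\A(\O)$. This is an exact sequence in $\gmod\L$, since $\A$ is closed under kernels and cokernels computed in $\gmod\L$ (Proposition \ref{kernel cokernel}). Applying $\Hom_\L(-,N)$ and taking degree-zero parts gives an exact sequence
$$0\to\gHom_\L(M'',N)\to\gHom_\L(M,N)\to\gHom_\L(M',N)\to\Ext^1_\L(M'',N)_0\to\Ext^1_\L(M,N)_0\to\Ext^1_\L(M',N)_0 .$$
Vanishing for $M'$ and $M''$ then gives vanishing for $M$. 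The second variable is handled symmetrically, using the long exact sequence for $\Hom_\L(M,-)$. One should check that these long exact sequences of graded Ext are compatible with degree-zero parts; this holds because graded projective resolutions compute them.

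For (2), I will induct on the length of $M$. If $M$ is simple, or all its composition factors lie in one orbit, we are done. Otherwise, group the composition factors by orbit and show that $M$ splits accordingly. Take a simple subobject $S\subseteq M$ with $S\in\O$, and by induction decompose $M/S\cong\bigoplus_i N_{\O_i}$. Let $N'$ be the sum of the components with $\O_i\ne\O$ and $N''$ the component for $\O$. Pull back $0\to S\to M\to M/S\to0$ along $N'\to M/S$ to get an extension $0\to S\to X\to N'\to0$ in $\gmod\L$. By (1), $\Ext^1_\L(N',S)_0=0$, so Lemma \ref{graded Ext1} splits this extension and $N'$ lifts to a subobject of $M$. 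Its preimage $Y$ of $N''$ is an extension of $N''$ by $S$, hence lies in $\Serre_\A(\O)$. Then $M$ is an extension of $N'$ by $Y$, equivalently an extension of $Y$ by $N'$.

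The cleaner way to say this is: whenever $Y\in\Serre_\A(\O)$ and $N'$ is a direct sum of objects from the other orbits' Serre closures, we have $0\to Y\to M\to N'\to0$. Since $\Ext^1_\L(N',Y)_0=0$ by (1) and additivity, Lemma \ref{graded Ext1} gives $M\cong Y\oplus N'$. Combining with the decomposition of $N'$ finishes the proof. Because this splitting is done in $\gmod\L$, the summands automatically remain in $\A$.

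The main obstacle is the correct bookkeeping in (1): ensuring Proposition \ref{simple orthogonal} applies to simples of $\A$ and that the exact sequences are exact in $\gmod\L$. Everything else is formal.
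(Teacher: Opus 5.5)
Your proof is correct and follows the paper's argument. Part (1) uses the same induction on composition lengths, starting from the simple case via Proposition \ref{simple orthogonal}; part (2) uses the same induction on length, with a splitting obtained from part (1) and Lemma \ref{graded Ext1}. The only difference is that you peel off a simple subobject and split $0\to Y\to M\to N'\to0$, whereas the paper peels off a simple quotient and splits $0\to L'\to M\to K\to0$, which is the dual form of the same argument.
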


\begin{proof}
(1) If $M$ and $N$ are simple objects belonging to distinct $\Omega$-orbits $\O$ and $\O'$, respectively, then $\gHom_\L(M,N)=0$, while Proposition \ref{simple orthogonal} gives $\Ext_\L^1(M,N)_0=0$. For general $M$ and $N$, the assertion follows by induction on the sum of their composition lengths.

(2) We use induction on the length of $M$. The assertion is clear when $M=0$ or $M$ is simple. Choose an exact sequence $0\to L\to M\to S\to0$ in $\A$ with $S\in\operatorname{Sim}(\A)$, and let $\O$ be the $\Omega$-orbit of $S$. By induction and by combining summands belonging to the same orbit, we may write $L=L_{\O}\oplus L'$, where $L_{\O}\in\Serre_{\A}(\O)$ and every nonzero direct summand of $L'$ belongs to an $\Omega$-orbit distinct from $\O$. From the exact sequence $0\to L_{\O}\oplus L'\to M\to S\to0$, one obtains an object $K\in\A$ fitting into exact sequences
$$
0\to L_{\O}\to K\to S\to0, \quad 0\to L'\to M\to K\to0;
$$
the construction is the same as in the proof of \cite[Lemma 3.1]{T06}. Then $K$ belongs to $\Serre_{\A}(\O)$, and hence $\Ext_\L^1(K,L')_0=0$ by (1). Lemma \ref{graded Ext1} shows that $0\to L'\to M\to K\to0$ splits, which gives the desired decomposition.
\end{proof}

The following lemma gives one part of the main result of this section, namely that a proper thick subcategory of $\mod\L$ containing $\L$ gives rise to a Serre subcategory of $\A$.

\begin{lem}\label{Serre}
Let $\X$ be a proper thick subcategory of $\mod\L$ containing $\L$.  Then $\A_\X$ of Lemma \ref{thick to abelian}(2) is a Serre subcategory of $\A$.
\end{lem}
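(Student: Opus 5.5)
First, $\A_\X\subseteq\A$. By Proposition \ref{bounded betti thick}, $\X\subseteq\BB(\L)$, since $\BB(\L)$ is the unique maximal proper thick subcategory containing $\L$. Hence every object of $\A_\X$ has underlying module in $\BB(\L)$, so $\A_\X$ is a full subcategory of $\A$. By Lemma \ref{thick to abelian}(2)(i), $\A_\X$ is closed under extensions in $\gmod\L$, hence under extensions in $\A$. The content of the lemma is therefore closure under subobjects and quotients in $\A$.

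\emph{Reduction via the kernel--cokernel sequence.} Let $M\in\A_\X$ and $0\to L\to M\to N\to0$ be exact in $\A$. If we show $N\in\A_\X$, then $L\in\A_\X$ as well: the sequence is exact in $\gmod\L$, so its underlying sequence in $\mod\L$ is exact, and $\X$ is closed under kernels of epimorphisms. Thus it suffices to treat quotients; the subobject case can alternatively be handled by the dual argument.

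\emph{Quotients via a morphism into $M$.} The plan is to realize $N$ as the cokernel of a degree-preserving morphism between objects of $\A_\X$ and then apply Proposition \ref{kernel cokernel}. The naive choice $L\to M$ does not work, because $L$ is not known to lie in $\X$. Instead I would use Lemma \ref{orbit decomposition} to reduce to a single $\Omega$-orbit, and I expect this to be the main obstacle. The aim is to show that every simple object $S$ of $\A$ which is a composition factor of some $M\in\A_\X$ already lies in $\A_\X$. Once this holds, $L$, having a composition series with factors in $\A_\X$, lies in $\A_\X$ by extension closure, and $N$ follows in the same way.

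\emph{Composition factors lie in $\A_\X$.} To prove this, I would use induction on the length of $M$ in $\A$. Choose a simple subobject $S\subseteq M$ in $\A$; then $\gHom_\L(S,M)\ne0$. Using the periodicity of dimension vectors (Lemma \ref{periodic}) together with the numerical identity of Lemma \ref{Ext calculation}, I would try to produce a nonzero morphism $g\colon\Omega^{n}M\to M$ in $\A_\X$ whose image is $S$, or at least a morphism whose image has strictly smaller length. This mirrors the proof of Theorem \ref{more than two}, where a nonzero map $\Omega^{-n}X\to X$ was extracted from a Hom-dimension count. Proposition \ref{kernel cokernel} then puts $\image g=\ker(\cok g\to 0)$, and hence $\cok g$ and $\ker g$, into $\A_\X$, which lets the induction proceed. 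Concretely: sum the expression of Lemma \ref{Ext calculation} over the $\Omega$-orbit of $M$, using $\bfu=\sum_i\bft(\Omega^iM)$ with $E\bfu=2\bfu$ as in Proposition \ref{simple orthogonal}. This sum is $\bfu^{\T}\bft(M)+\bfu^{\T}\bfs(M)-\ldots$, and it should show that $\gHom_\L(\Omega^iM,M)\ne0$ for some $i$. If that Hom space only contains isomorphisms, one iterates within the orbit. The delicate point is guaranteeing that some such morphism is not an isomorphism, so that it produces a proper nonzero subquotient. The fallback is to argue via Lemma \ref{orbit decomposition}(2) that each $M\in\A_\X$ splits into orbit-components $M_{\O}$ lying in $\A_\X$, and then to work within $\Serre_\A(\O)$, which is uniserial-like (a tube).
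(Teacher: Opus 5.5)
Your preliminary steps are sound and match the paper's skeleton. These are: $\A_\X\subseteq\A$ via Proposition~\ref{bounded betti thick}; extension closure; and the reduction to showing that every $\A$-composition factor of an object of $\A_\X$ lies in $\A_\X$, i.e.\ that every simple object of $\A_\X$ is simple in $\A$. After that, a composition-series argument finishes. But that claim is the entire content of the lemma, and your proposal does not establish it.

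\textbf{Why the Hom-counting route fails.} It cannot supply this claim at $\lambda=2$. Summing Lemma~\ref{Ext calculation} over a common period $p$ of the dimension vectors of the $\Omega^iM$ gives
\[
\sum_{i=0}^{p-1}\bigl(\dim_k\gHom_\L(\Omega^iM,M)-\dim_k\Ext_\L^1(\Omega^iM,M)_0\bigr)=\bfu^{\T}\bft(M)+\bfu^{\T}\bfs(M)-\bfu^{\T}E\bfs(M)=\bfu^{\T}\bigl(\bft(M)-\bfs(M)\bigr).
\]
Since the eigenvalue $2$ is simple, $\bfu$ is a positive multiple of $\bfv$, and $\bfv^{\T}\bft(M)=\bfv^{\T}\bfs(M)$ by Lemma~\ref{periodic}, so this sum is $0$. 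Unlike in Theorem~\ref{more than two}, there is no strict inequality to exploit, and the count tells you nothing beyond the existence of the identity. No dimension count can decide whether a map is an isomorphism, which is exactly the ``delicate point'' you leave open. The tube fallback is not available from the paper either: Remark~\ref{hereditary} needs $k$ algebraically closed and external results. Uniseriality alone would also not place the composition factors in $\A_\X$.

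\textbf{The missing idea.} Use the orbit decomposition structurally, applied to a simple object $X$ of $\A_\X$.

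First, $X$ is indecomposable in $\A$. A graded direct summand of $X$ has underlying module a direct summand of one in $\X$, so it lies in $\A_\X$.

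By Lemma~\ref{orbit decomposition}(2), all $\A$-composition factors of $X$ then lie in one $\Omega$-orbit. So if $X$ is not simple in $\A$, a simple subobject $S\subsetneq X$ and a simple quotient $X\twoheadrightarrow T$ satisfy $\Omega^nT\cong S$ for some $n\in\Z$. The composite $\Omega^nX\twoheadrightarrow\Omega^nT\cong S\hookrightarrow X$ is exactly the morphism with image $S$ you were trying to manufacture, and it needs no counting.

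Now $\Omega$ restricts to an exact autoequivalence of $\A_\X$ by Lemma~\ref{syzygy functors}, so $\Omega^nX$ is simple in $\A_\X$. Kernels and cokernels in $\A_\X$ are computed in $\gmod\L$ by Proposition~\ref{kernel cokernel}. Hence this nonzero morphism between simple objects of $\A_\X$ must be surjective, contradicting that its image is the proper subobject $S$.

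In your inductive framing, applying the same composite to an indecomposable $M\in\A_\X$ gives $S=\image g\in\A_\X$ directly, since $\A_\X$ is abelian. With this in place, your closing step — intersecting a composition series with a subobject and invoking extension closure — completes the proof as in the paper.
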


\begin{proof}
Proposition \ref{bounded betti thick} gives $\X\subseteq\BB(\L)$, and hence $\A_\X\subseteq\A$.  Lemma \ref{thick to abelian}(2)(i) shows that $\A_\X$ is an abelian subcategory of $\A$ of finite length, and  it is stable under $\Omega$ and $\Omega^{-1}$ by Lemma \ref{syzygy functors}.

We prove that every simple object of $\A_\X$ is simple in $\A$.
Let $X$ be a simple object of $\A_\X$. Since it is indecomposable in $\A$, Lemma \ref{orbit decomposition}(2) shows that all simple composition factors of $X$ in $\A$ belong to one $\Omega$-orbit $\O$.
Suppose that $X$ is not simple in $\A$.  Since $\A$ has finite length, there are a simple subobject $S\subsetneq X$ and a simple quotient $X\twoheadrightarrow T$.  Both $S$ and $T$ belong to $\O$, so we can choose $n\in\Z$ such that $\Omega^nT\cong S$.  Applying the exact autoequivalence $\Omega^n$ to $X\twoheadrightarrow T$ and composing with $\Omega^nT\cong S\subseteq X$ gives a nonzero morphism $\Omega^nX\to X$.
Both $\Omega^nX$ and $X$ are simple objects of $\A_\X$, so this morphism is an isomorphism in $\A_\X$, and hence in $\A$.  On the other hand, its image is contained in the proper subobject $S$ of $X$, a contradiction. 

Let $M\in\A_\X$.  A composition series of $M$ in $\A_\X$ is also a composition series in $\A$, and all its factors belong to $\A_\X$.  If $L\subseteq M$ is any subobject in $\A$, intersecting this composition series with $L$ gives a filtration of $L$ whose nonzero factors are among the simple factors of $M$.  Thus $L\in\A_\X$ by extension closure.  The same argument applied to the quotient filtration gives $M/L\in\A_\X$.
It is easy to see that $\A_\X$ is closed under extensions in $\A$.
Hence $\A_\X$ is Serre.
\end{proof}

We pass once to the stable category in order to forget the grading. 

\begin{dfn}
Recall from the preceding section that $\underline{\mod}\L$ is triangulated and its suspension is the cosyzygy functor.  Since $\BB(\L)$ is thick and contains the projective modules, its image in $\underline{\mod}\L$ is a thick subcategory; we denote it by $\underline{\BB}(\L)$.
For an $\Omega$-orbit $\O$, let $\underline{\Serre}(\O)$ be the full subcategory of $\underline{\BB}(\L)$ consisting of the objects isomorphic in $\underline{\mod}\L$ to the underlying modules of objects of $\Serre_{\A}(\O)$.
\end{dfn}

\begin{prop}\label{stable orbit decomposition}
The following statements hold.
\begin{enumerate}[{\rm (1)}]
\item If $\O$ and $\O'$ are distinct $\Omega$-orbits, then $\underline{\Hom}_\L(M,N)=0$ for every $M\in\underline{\Serre}(\O)$ and $N\in\underline{\Serre}(\O')$.
\item Every object $M\in\underline{\BB}(\L)$ admits a decomposition 
$$
M\cong M_{\O_1}\oplus\cdots\oplus M_{\O_n}
$$ 
for some $n\ge0$ and some $\Omega$-orbits $\O_1,\ldots,\O_n$ with $M_{\O_i}\in\underline{\Serre}(\O_i)$ for each $1\le i\le n$.
\item Each $\underline{\Serre}(\O)$ is a nonzero thick subcategory of
$\underline{\mod}\L$.
\end{enumerate}
\end{prop}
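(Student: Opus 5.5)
We will deduce all three parts from Lemma \ref{orbit decomposition}, passing from graded degree-zero information to stable Hom spaces by means of the syzygy autoequivalence.

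\emph{Part (1).} Let $M,N\in\Serre_\A(\O)$ and $\Serre_\A(\O')$ respectively, regarded as underlying modules without projective summands. We first express $\underline{\Hom}_\L(M,N)$ through graded pieces. Since $\L$ is self-injective, $\underline{\Hom}_\L(M,N)\cong\underline{\Hom}_\L(\Omega^{-1}M,\Omega^{-1}N)$, and $\underline{\Hom}_\L(X,Y)$ is a quotient of $\Hom_\L(X,Y)=\bigoplus_i\gHom_\L(X,Y(i))$. For $X,Y$ concentrated in degrees $0,1$ with $\rad=\soc$ equal to the degree-one part, only the shifts $i\in\{-1,0,1\}$ can contribute. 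A shift $i=1$ map sends $X_0\to Y_1$ and kills $X_1$, i.e.\ it factors through $X/\rad X\to\soc Y$. Our plan is to show that such maps factor through projectives modulo $\Ext^1(-,-)_0$ terms; concretely, use $\underline{\Hom}_\L(M,N)\cong\Ext^1_\L(\Omega^{-1}M,N)$ and compute this Ext from the graded cosyzygy sequence $0\to M\to P(\bfs)(1)\to(\Omega^{-1}M)(1)\to0$. Its graded components then reduce to $\gHom$ and $\Ext^1(-,-)_0$ between objects of the form $\Omega^{j}M$ and $N$ up to shift. Since $\Serre_\A(\O)$ is $\Omega$-stable ($\Omega$ permutes simples within $\O$), Lemma \ref{orbit decomposition}(1) applied to $\Omega^jM\in\Serre_\A(\O)$ and $N\in\Serre_\A(\O')$ kills all these components. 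The main obstacle is the bookkeeping showing that every graded component of $\Ext^1_\L(\Omega^{-1}M,N)$ is either forced to vanish by degree reasons (via Lemma \ref{graded Ext2}) or identifies with some $\gHom_\L(\Omega^jM,N)$ or $\Ext^1_\L(\Omega^jM,N)_0$. We would first attempt this with the normalized sequences of Lemma \ref{syzygy functors}, checking the three possible degrees separately.

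\emph{Part (2).} Any $M\in\underline{\BB}(\L)$ is stably isomorphic to its nonprojective part $N$. By Lemma \ref{basic properties2}(2)--(3) and Proposition \ref{bounded betti thick}, $N$ carries a grading making it an object of $\A$. Lemma \ref{orbit decomposition}(2) decomposes $N$ in $\gmod\L$, hence as a module, into pieces lying in the $\Serre_\A(\O_i)$, which gives the decomposition.

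\emph{Part (3).} Nonzero: a simple object in $\O$ is a nonprojective module. Closure under summands: take a summand $X$ of $M\in\underline{\Serre}(\O)$; decompose $X$ by (2) as $\bigoplus_i X_{\O_i}$. Any component $X_{\O'}$ with $\O'\ne\O$ is a summand of $M$, so its identity factors through $M$ and hence is zero by (1); thus $X\in\underline{\Serre}(\O)$. Closure under $\Omega^{\pm1}$ follows from the $\Omega$-stability of $\Serre_\A(\O)$ together with Lemma \ref{syzygy functors}, since the normalized syzygy has the same underlying module as the minimal syzygy. Closure under cones: given a triangle $M\to N\to C\to$ with $M,N\in\underline{\Serre}(\O)$, we have $C\in\underline{\BB}(\L)$ because $\underline{\BB}(\L)$ is thick. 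Decompose $C=C_\O\oplus C'$ by (2). The map $N\to C'$ and the map $C'\to\Omega^{-1}M$ both vanish by (1), so $C'$ is a direct summand of the cone of the zero map between zero objects; hence $C'=0$.
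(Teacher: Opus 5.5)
Your strategy is the paper's. You split a morphism into homogeneous components of degrees $-1,0,1$ and kill them using Lemma \ref{orbit decomposition}(1) together with $\Omega$-stability of $\Serre_\A(\O)$. Part (2) is the paper's argument verbatim, and part (3) uses the same orbitwise decomposition of the cone. The only real variation is in the degree-one piece of (1). There you would use the cosyzygy sequence $0\to M\to P(\bfs)(1)\to(\Omega^{-1}M)(1)\to 0$ of $M$ instead of the paper's syzygy sequence of $N$. The obstruction group then becomes $\Ext^1_\L(\Omega^{-1}M,N)_0$ rather than $\Ext^1_\L(M,\Omega N)_0$, and both vanish by the same lemma, so this is an equally short mirror image.

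That said, (1) is where all the content lies, and you leave it as unexecuted bookkeeping. Moreover, the tools you name do not cover every degree. Lemma \ref{orbit decomposition}(1) only concerns $\gHom_\L(-,-)$ and $\Ext^1_\L(-,-)_0$, i.e.\ internal degree zero, and Lemma \ref{graded Ext2} only gives vanishing in high internal degrees. Neither handles the degree $-1$ component $\gHom_\L(M,N(-1))$, which is what $\Ext^1_\L(\Omega^{-1}M,N)_{-2}$ amounts to. It vanishes for a different reason: if $f\colon M\to N(-1)$, then $f(M_0)\subseteq N_{-1}=0$, so $f(M_1)=f(JM_0)=Jf(M_0)=0$, using $M_1=JM_0$ from Lemma \ref{thick to abelian}(2)(iii).

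With this, (1) takes three lines. Degree-zero maps vanish by Lemma \ref{orbit decomposition}(1), and degree $-1$ maps vanish as just shown. For degree one, take the degree-one part of $\Hom_\L(P(\bfs)(1),N)\to\Hom_\L(M,N)\to\Ext^1_\L((\Omega^{-1}M)(1),N)$ and use $\Ext^1_\L((\Omega^{-1}M)(1),N)_1\cong\Ext^1_\L(\Omega^{-1}M,N)_0=0$; this shows every degree-one map factors through $P(\bfs)(1)$. The isomorphism $\underline{\Hom}_\L(M,N)\cong\Ext^1_\L(\Omega^{-1}M,N)$ is not needed.

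In (3), the phrase ``$C'$ is a direct summand of the cone of the zero map between zero objects'' is not an argument, but your two vanishings do suffice. The inclusion $\iota\colon C'\to C$ composed with $C\to\Omega^{-1}M$ is zero, so $\iota$ factors through $N\to C$. Composing with the projection $C\to C'$, which kills the image of $N$, then gives $\id_{C'}=0$. The paper instead uses $\underline{\Hom}_\L(C',N)=0=\underline{\Hom}_\L(C',\Omega^{-1}M)$ to get $\underline{\Hom}_\L(C',C)=0$ directly. Also, for closure under summands, first note that the summand lies in $\underline{\BB}(\L)$ by thickness, so that (2) applies to it.
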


\begin{proof}
(1)   It is enough to take nonzero modules $M\in\Serre_{\A}(\O)$ and $N\in\Serre_{\A}(\O')$ with $\O\ne\O'$ and prove $\underline{\Hom}_\L(M,N)=0$. 
Since every ungraded morphism $M\to N$ is the sum of its homogeneous components, it is enough to consider morphisms $f:M\to N(i)$ in $\gmod\L$ for all $i\in\Z$. Since both modules are concentrated in degrees zero and one, only $i=-1,0,1$ can occur.
Lemma \ref{orbit decomposition}(1) yields $f=0$ if $i=0$.
When $i=-1$, we have $f(M_0)\subseteq N(-1)_0=N_{-1}=0$ and hence $f(M_1)=f(JM_0)=Jf(M_0)=0$ by Lemma \ref{thick to abelian}(2)(iii), which means that $f=0$.
It remains to consider the case where $i=1$. Let 
$$
0\to (\Omega N)(-1)\to P\to N\to0,
\quad
P=P(\bft(N)).
$$
be an exact sequence in Lemma \ref{syzygy functors}.
Applying $\Hom_\L(M,-)$ and taking the degree one component gives an exact sequence
$$
\Hom_\L(M,P)_1\to\Hom_\L(M,N)_1 \to\Ext_\L^1(M,(\Omega N)(-1))_1\cong\Ext_\L^1(M,\Omega N)_0=0,
$$
where the last equality follows from Lemma \ref{orbit decomposition}(1).
Hence $f$ factors through a projective module $P$, and it is zero in $\underline{\Hom}_\L(M,N)$.

(2) Let $M\in\BB(\L)$ be a module with no nonzero projective summands. Lemma \ref{basic properties2}(2)--(3) gives a grading $M=M_0\oplus M_1$ with $M_1=JM$, so that $M\in\A$. By Lemma \ref{orbit decomposition}(2), there is a decomposition
$$
M\cong M_{\O_1}\oplus\cdots\oplus M_{\O_n}
$$
in $\A$, where $\O_1,\ldots,\O_n$ are $\Omega$-orbits and $M_{\O_i}\in\Serre_{\A}(\O_i)$ for each $i$. Forgetting the grading and passing to the stable category gives the desired decomposition.

(3) First of all, $\underline{\Serre}(\O)$ is nonzero.  Indeed, for every simple object $S\in\O$, its underlying module satisfies $J^2S=0$, whereas every nonzero projective $\L$-module is not annihilated by $J^2$; see Lemma \ref{basic properties1}(2).
The exact sequences in Lemma \ref{syzygy functors} show that syzygy and cosyzygy carry $\underline{\Serre}(\O)$ to itself, since $\Omega$ and $\Omega^{-1}$ preserve $\Serre_{\A}(\O)$. Hence $\underline{\Serre}(\O)$ is closed under the suspension and its inverse. Let
$$
X\longrightarrow Y\longrightarrow Z\longrightarrow\Omega^{-1}X
$$
be a triangle in $\underline{\mod}\L$ with $X,Y\in\underline{\Serre}(\O)$. 
Since $\underline{\BB}(\L)$ is thick and contains $X$ and $Y$, one has $Z\in\underline{\BB}(\L)$. By (2), there is a decomposition $Z\cong Z_{\O_1}\oplus\cdots\oplus Z_{\O_n}$, where $Z_{\O_i}\in\underline{\Serre}(\O_i)$. 
For $1\le i\le n$ with $\O_i\ne\O$, applying $\underline{\Hom}_\L(Z_{\O_i},-)$ to the triangle and using (1) gives
$$
0=\underline{\Hom}_\L(Z_{\O_i},Y)\to\underline{\Hom}_\L(Z_{\O_i},Z)\to \underline{\Hom}_\L(Z_{\O_i},\Omega^{-1}X)=0.
$$
Thus $\underline{\Hom}_\L(Z_{\O_i},Z)=0$, which implies that $Z_{\O_i}=0$ since $Z_{\O_i}$ is a direct summand of $Z$.
We get $Z=Z_\O\in \underline{\Serre}(\O)$.
Similarly, $\underline{\Serre}(\O)$ is closed under direct summands. Indeed, since $\underline{\BB}(\L)$ is thick, every direct summand of an object in $\underline{\Serre}(\O)$ belongs to $\underline{\BB}(\L)$, and (1) shows that, in the decomposition given by (2), no component other than the $\underline{\Serre}(\O)$-component can occur.
\end{proof}

We can now state the classification which is the main result of this section.
Let $\pi:\mod\L \to \underline{\mod}(\L)$ be the canonical functor.
When taking the thick closure of objects of $\A$, we always forget their gradings.

\begin{thm}\label{classification equal two}
There is a commutative diagram of order-preserving mutually inverse bijections
$$
\xymatrix@C=1.7pc{
{\left\{
\begin{matrix}
\text{$\Omega$-stable Serre}\\
\text{subcategories of $\A$}
\end{matrix}
\right\}}
\ar@<.7mm>[rr]^-{\thick((-)\cup\{\L\})}
\ar@<.7mm>[d]^-{(-)\cap\operatorname{Sim}(\A)}
&&
{\left\{
\begin{matrix}
\text{Proper thick subcategories}\\
\text{of $\mod\L$ containing $\L$}
\end{matrix}
\right\}}
\ar@<.7mm>[ll]^-{\A_{(-)}}
\ar@<.7mm>[r]^-{\pi}
\ar@{=}[d]
&
{\left\{
\begin{matrix}
\text{Proper thick subcategories}\\
\text{of $\underline{\mod}(\L)$}
\end{matrix}
\right\}}
\ar@<.7mm>[l]^-{\pi^{-1}}
\ar@{=}[d]\\
{\left\{
\begin{matrix}
\text{$\Omega$-stable subsets}\\
\text{of $\operatorname{Sim}(\A)$}
\end{matrix}
\right\}}
\ar@<.7mm>[rr]^-{\thick((-)\cup\{\L\})}
\ar@<.7mm>[u]^-{\Serre_{\A}(-)}
&&
{\left\{
\begin{matrix}
\text{Thick subcategories of $\mod\L$}\\
\text{between $\add\L$ and $\BB(\L)$}
\end{matrix}
\right\}}
\ar@<.7mm>[ll]^-{\A_{(-)}\cap\operatorname{Sim}(\A)}
\ar@<.7mm>[r]^-{\pi}
&
{\left\{
\begin{matrix}
\text{Thick subcategories}\\
\text{of $\underline{\BB}(\L)$}
\end{matrix}
\right\}}
\ar@<.7mm>[l]^-{\pi^{-1}}
}
$$
\end{thm}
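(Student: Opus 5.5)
We prove the theorem square by square, starting with the left column and then the top row, and deducing the bottom row from them. The right-hand square is the standard correspondence between thick subcategories of $\mod\L$ containing $\L$ and thick subcategories of $\underline{\mod}\L$ (\cite{KS}, \cite{R}), restricted to proper ones. Since $\pi(\BB(\L))=\underline{\BB}(\L)$ and the correspondence preserves order, thick subcategories between $\add\L$ and $\BB(\L)$ correspond to thick subcategories of $\underline{\BB}(\L)$. Proposition \ref{bounded betti thick} shows that the proper thick subcategories containing $\L$ are exactly those contained in $\BB(\L)$, which justifies the vertical equality.

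For the left column, $\A$ is a finite length abelian category by Lemma \ref{thick to abelian}(2)(i). Hence a Serre subcategory is determined by the simple objects it contains: it consists of the objects all of whose composition factors lie in that set. Conversely, $\Serre_\A$ of a set of simples is the category of objects with composition factors in the set, and it contains exactly those simples. $\Omega$ is an exact autoequivalence, so it permutes composition factors, and $\Omega$-stability therefore matches on both sides.

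The key step is the top row. First, if $\X$ is proper, then $\A_\X$ is an $\Omega$-stable Serre subcategory of $\A$ by Lemma \ref{Serre}, with $\Omega$-stability coming from Lemma \ref{syzygy functors}.

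Next, for $\X$ proper we claim $\thick(\A_\X\cup\{\L\})=\X$. The inclusion ``$\subseteq$'' is clear. For ``$\supseteq$'', write $M\in\X$ as $P\oplus N$ with $P$ projective and $N$ without projective summands. Lemma \ref{basic properties2}(2)--(3) grades $N$ in degrees $0,1$, so $N\in\A_\X$.

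Conversely, let $\mathcal S$ be an $\Omega$-stable Serre subcategory and set $\X=\thick(\mathcal S\cup\{\L\})$. We must show $\X$ is proper and $\A_\X=\mathcal S$. Properness follows because $\mathcal S\subseteq\A$ and $\L\in\BB(\L)$ give $\X\subseteq\BB(\L)$. The inclusion $\mathcal S\subseteq\A_\X$ is clear.

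The main obstacle is the reverse inclusion $\A_\X\subseteq\mathcal S$: we must show that the thick closure does not create new simple objects of $\A$. I would define
$$\mathcal Y=\{M\in\mod\L\mid \text{the non-projective part of $M$, graded, lies in } \mathcal S\}$$
and prove that $\mathcal Y$ is thick. Then $\X\subseteq\mathcal Y$, and so $\A_\X\subseteq\mathcal S$.

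To prove thickness of $\mathcal Y$, the cleanest route is to pass to the stable category. Write $\mathcal S=\Serre_\A(\bigcup\O_i)$ over the orbits it contains, and compare $\pi(\mathcal Y)$ with the subcategory of $\underline{\BB}(\L)$ generated by the $\underline{\Serre}(\O_i)$. By Proposition \ref{stable orbit decomposition}, each $\underline{\Serre}(\O)$ is thick and the orbit pieces are mutually Hom-orthogonal. Hence finite direct sums of the $\underline{\Serre}(\O_i)$ form a thick subcategory: a triangle decomposes orbitwise by the same Hom-vanishing argument as in part (3) of that proposition.

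It remains to check that $\pi(\mathcal Y)$ equals this sum, i.e.\ that $\underline{\Serre}(\O)$ corresponds exactly to $\Serre_\A(\O)$ on non-projective parts. One direction is by definition. For the other, take a module without projective summands that is stably isomorphic to an object of $\Serre_\A(\O)$. By Krull--Schmidt it is then actually isomorphic to that object, and by Remark \ref{unique grading} its grading is unique, so it lies in $\Serre_\A(\O)$.

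Finally, the bottom row follows by composing the top row with the left column, and commutativity of the diagram is immediate from the definitions.
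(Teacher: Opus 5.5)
Your proposal is correct and follows essentially the same route as the paper. Both reduce the key inclusion $\A_\X\subseteq\mathcal S$ to the thick subcategory of $\underline{\BB}(\L)$ formed by direct sums of the $\underline{\Serre}(\O)$ with $\O$ among the orbits of $\mathcal S$, whose thickness comes from Proposition \ref{stable orbit decomposition}. The only difference is the last step: the paper decomposes $M\in\A_\X$ orbitwise via Lemma \ref{orbit decomposition}(2) and kills the components outside $W$ by Hom-orthogonality, whereas you identify $\pi^{-1}$ of that subcategory with $\mathcal Y$ using Krull--Schmidt and uniqueness of gradings. Your ``by definition'' direction is not quite by definition: it needs Lemma \ref{orbit decomposition}(2), or alternatively the extension-closure of that stable subcategory.
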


\begin{proof}
The correspondence given by $\pi$ and $\pi^{-1}$ follows from \cite[Theorem 1 and Remark 2]{KS} and \cite[Theorem 2.1]{R}. The middle and right vertical equalities follow from Proposition \ref{bounded betti thick}.
Hence the right-hand square is a commutative diagram of bijections.
Since $\Omega$ is an exact autoequivalence of $\A$, the left vertical correspondence is obvious.
By Lemmas \ref{Serre} and \ref{syzygy functors}, $\A_{(-)}$ is well-defined, and by Proposition \ref{bounded betti thick},  $\thick((-)\cup\{\L\})$ is well-defined. Hence all the maps appearing in the diagram are well-defined.
The two composites from the upper middle term to the lower left term coincide, and so do the two composites in the opposite direction. Hence it remains only to show that the two maps between the upper left and upper middle terms are mutually inverse.

Let $\X$ be a proper thick subcategory of $\mod\L$ containing $\L$.
We claim that $\X=\thick(\A_\X\cup\{\L\})$.
The inclusion $\X\supseteq\thick(\A_\X\cup\{\L\})$ is clear.
For the other, let $M\in\X$ and write $M=P\oplus N$, where $P$ is projective and $N$ has no nonzero projective direct summand.
If $N\ne0$, Lemma \ref{basic properties2}(2)--(3) gives a grading of $N$ for which $N\in\A_\X$.
Thus $M\in\thick(\A_\X\cup\{\L\})$, and the claim follows.

Conversely, let $\mathcal{S}$ be an $\Omega$-stable Serre subcategory of $\A$ and set $\X=\thick(\mathcal{S}\cup\{\L\})$.
Clearly $\mathcal{S}\subseteq\A_\X$.
Set $W=\mathcal{S}\cap\operatorname{Sim}(\A)$.
Then $W$ is an $\Omega$-stable subset of $\operatorname{Sim}(\A)$ and $\mathcal{S}=\Serre_{\A} (W)$.
Set
$$
\mathcal{C}_W=\add_{\underline{\BB}(\L)}\{\underline{\Serre}(\O)\mid \O\subseteq W\}.
$$
By Proposition \ref{stable orbit decomposition}(1)--(3), $\mathcal{C}_W$ is thick: morphisms between distinct orbit components vanish, so cones and direct summands decompose orbitwise.
Since $\pi(\mathcal{S})\subseteq\mathcal{C}_W$ and $\pi(\L)=0$, we have
$\pi(\X)\subseteq\mathcal{C}_W$.
Let $M\in\A_\X$, and write
$$
M\cong M_{\O_1}\oplus\cdots\oplus M_{\O_n},
\qquad
M_{\O_i}\in\Serre_{\A}(\O_i)
$$
as in Lemma \ref{orbit decomposition}(2).
Since $\pi(M)\in\mathcal{C}_W$ and $\mathcal{C}_W$ is closed under direct
summands, $\pi(M_{\O_i})\in\mathcal{C}_W$ for every $i$.
If $M_{\O_i}\ne0$ and $\O_i\not\subseteq W$, Proposition
\ref{stable orbit decomposition}(1) gives $\underline{\Hom}_\L(\pi(M_{\O_i}),\mathcal{C}_W)=0$, and hence $\pi(M_{\O_i})=0$.
This is impossible, since a nonzero object of $\A$ is annihilated by
$J^2$, whereas no nonzero projective module is, by Lemma
\ref{basic properties1}(2).
Thus $\O_i\subseteq W$ whenever $M_{\O_i}\ne0$, so
$M\in\Serre_{\A}(W)=\mathcal{S}$.
Therefore $\A_\X=\mathcal{S}$.
\end{proof}

\begin{rem}\label{hereditary}
For this remark, assume in addition that $k$ is algebraically closed.
The category of graded $\L$-modules concentrated in degrees zero and one is naturally equivalent to $\mod H$, where
$$
H=
\begin{pmatrix}
\L_0&0\\
\L_1&\L_0
\end{pmatrix}.
$$
The algebra $H$ is hereditary, and after passing to its basic algebra, its quiver is the separated quiver of $\L/J^2$; see \cite[Chapter X, Section 2]{ARS}. 
Since $\lambda=2$, every connected component of the separated quiver is of Euclidean type by \cite[Theorem 4.1]{Ben}.
Let $\reg H$ denote the category of regular $H$-modules, and choose $\bfv\in\mathbb R_{>0}^r$ with $E\bfv=2\bfv$.
Since the adjacency matrix of the underlying separated graph is
$$
\begin{pmatrix}
0&E\\
E&0
\end{pmatrix},
$$
$(\bfv\mid\bfv)$ restricts on each component to a positive multiple of its radical vector; see \cite[Theorem 4.2.1]{Kr}, and the corresponding defect of $(\bft\mid\bfs)$ is $\bfv^{\T}\bft-\bfv^{\T}\bfs$ up to a nonzero scalar. Hence \cite[Proposition 5.2.1]{Kr} and Lemma \ref{periodic} show that the above equivalence restricts to $\A\cong\reg H$.
Consequently, $\Omega$ induces an exact autoequivalence $\sigma$ of $\reg H$, and Theorem \ref{classification equal two} identifies the proper thick subcategories of $\underline{\mod}(\L)$ with the $\sigma$-stable Serre subcategories of $\reg H$.
\end{rem}

\section{The general case}

In this section, we state the results in the general case obtained by combining the results for indecomposable algebras established in the preceding sections.
Let $k$ be a field, and let $\L$ be a finite-dimensional split weakly symmetric $k$-algebra with radical cube zero, and write
$$
\L=\L^{(1)}\times\cdots\times\L^{(t)}
$$
for its block decomposition. 
For each $1\le i\le t$, choose representatives $S_{i,1},\ldots,S_{i,r_i}$ of the isomorphism classes of simple $\L^{(i)}$-modules, and set
\begin{align*}
&E_i=\left(\dim_k\Ext_{\L^{(i)}}^1(S_{i,p},S_{i,q})\right)_{1\le p,q\le r_i},\quad \lambda_i=\rho(E_i), \\
&I_2=\{i\mid \lambda_i=2\}, \quad I_{\ne2}=\{i\mid \L^{(i)}\text{ is nonsemisimple and }\lambda_i\ne2\}.
\end{align*}
For each $i\in I_2$, fix a grading of $\L^{(i)}$ as in Lemma \ref{basic properties2}(1), and write $\A_i:=\A_{\BB(\L^{(i)})}$.
Let $\Omega_i$ be the exact autoequivalence of $\A_i$ in Lemma \ref{syzygy functors}, and let
$$
\mathfrak{S}_i=\{\text{$\Omega_i$-stable Serre subcategories of $\A_i$}\}\sqcup\{\gmod\L^{(i)}\}.
$$

\begin{thm}\label{general classification}
There are order-preserving bijections
$$
\left\{
\begin{matrix}
\text{thick subcategories of}\\
\text{$\mod\L$ containing $\L$}
\end{matrix}
\right\}
\longleftrightarrow
\mathcal{P}(I_{\ne2})
\times
\prod_{i\in I_2}\mathfrak{S}_i
\longleftrightarrow
\left\{
\begin{matrix}
\text{thick subcategories}\\
\text{of $\underline{\mod}\L$}
\end{matrix}
\right\},
$$
where $\mathcal{P}$ denotes the power set.
\end{thm}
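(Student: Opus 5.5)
The plan is to reduce to blocks and then apply the results for indecomposable algebras. Since $\L=\L^{(1)}\times\cdots\times\L^{(t)}$, we have $\mod\L\simeq\prod_i\mod\L^{(i)}$, and every module decomposes uniquely as $M=\bigoplus_i e^{(i)}M$, where $e^{(i)}$ are the central block idempotents. Each $e^{(i)}(-)$ is an exact functor.

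The first step is to show that a thick subcategory $\X$ of $\mod\L$ containing $\L$ is the product of its blockwise parts $\X_i=\X\cap\mod\L^{(i)}$. Indeed, $e^{(i)}M$ is a direct summand of $M$, so $M\in\X$ implies $e^{(i)}M\in\X_i$. Conversely, closure under finite direct sums (a special case of extensions) gives that $M\in\X$ whenever all $e^{(i)}M\in\X_i$. Each $\X_i$ is thick in $\mod\L^{(i)}$ and contains $\L^{(i)}$, since $\L^{(i)}$ is a summand of $\L$. In the other direction, for any family $(\X_i)$ of such subcategories, the class $\{M\mid e^{(i)}M\in\X_i\ \forall i\}$ is thick and contains $\L$, because short exact sequences and direct summands decompose blockwise. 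These two assignments are inverse to each other and preserve inclusion. So the thick subcategories of $\mod\L$ containing $\L$ correspond to $\prod_{i}\{\text{thick subcategories of }\mod\L^{(i)}\text{ containing }\L^{(i)}\}$.

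The second step determines each factor. If $\L^{(i)}$ is semisimple, then $\add\L^{(i)}=\mod\L^{(i)}$, so the factor is a single point and can be dropped. If $\L^{(i)}$ is nonsemisimple with $\lambda_i\ne2$, Proposition \ref{less than two} and Theorem \ref{more than two} apply. These cover the case $J^2=0$ through Remark \ref{square zero}, and that remark's argument also forces $\lambda_i=1$, so the case is indeed included. The factor is then the two-element chain $\{\add\L^{(i)}\subsetneq\mod\L^{(i)}\}$, which is distinct because $\L^{(i)}$ is nonsemisimple. Taking the product over $I_{\ne2}$ gives $\mathcal P(I_{\ne2})$ as posets.

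If $\lambda_i=2$, we first check that $J^2\ne0$. This holds because the square-zero indecomposable nonsemisimple block is $k[x]/(x^2)$ up to Morita equivalence, which has $\lambda=1$, while a semisimple block has $\lambda=0$. Theorem \ref{classification equal two} then identifies the proper thick subcategories with the $\Omega_i$-stable Serre subcategories of $\A_i$ in an order-preserving way. Adjoining the top element $\mod\L^{(i)}$, which corresponds to the formal maximum $\gmod\L^{(i)}$ in $\mathfrak S_i$, gives the factor $\mathfrak S_i$.

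The third step is the stable side. Either invoke \cite[Theorem 1 and Remark 2]{KS} and \cite[Theorem 2.1]{R} directly for $\L$, which is self-injective, or observe that $\underline{\mod}\L\simeq\prod_i\underline{\mod}\L^{(i)}$ and that thick subcategories of a finite product of triangulated categories are products of thick subcategories, by the same summand argument. Either route gives the right-hand bijection.

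The main subtlety is bookkeeping rather than mathematics: making sure the case split $I_2\sqcup I_{\ne2}\sqcup\{\text{semisimple blocks}\}$ is exhaustive and disjoint, and that order is preserved on $\mathfrak S_i$ with $\gmod\L^{(i)}$ placed as the top element.
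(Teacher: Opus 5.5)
Your proposal is correct and takes the same route as the paper: reduce to blocks via $\mod\L\cong\prod_i\mod\L^{(i)}$, apply Remark~\ref{square zero}, Proposition~\ref{less than two}, and Theorems~\ref{more than two} and~\ref{classification equal two} to each block, and pass to the stable side via the Krause--Solberg/Rickard correspondence. You also spell out details the paper leaves implicit, namely that thick subcategories containing $\L$ decompose blockwise, and that $\lambda_i=2$ forces $(\rad\L^{(i)})^2\neq0$, so that Section~5 applies.
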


\begin{proof}
Each block $\L^{(i)}$ is again a finite-dimensional split indecomposable weakly symmetric $k$-algebra with radical cube zero. Thus the results of Sections 4 and 5 apply to each $\L^{(i)}$.
Moreover,
$$
\mod\L\cong\prod_{i=1}^t\mod\L^{(i)}.
$$
Thus, the classification of thick subcategories over $\L$ reduces to combining the classifications for its blocks, and the assertion follows from Remark \ref{square zero}, Proposition \ref{less than two}, and Theorems \ref{more than two} and \ref{classification equal two}.
\end{proof}

\section{Commutative rings}

Some of the results in this paper remain meaningful even when restricted to commutative rings; in fact, some of their most important applications arise in this setting.
In this section, we discuss these applications and place our results in the context of previous work on commutative rings.
We first recall the notion of dominance for commutative Noetherian local rings introduced by Takahashi \cite{T23}.

\begin{dfn}
Suppose that $R$ is a commutative Noetherian local ring with residue field $k$.
Following \cite[Corollary 10.8 and Remark 10.9]{T23}, we say that $R$ is \textit{dominant} provided that $k$ belongs to $\thick_{\mod R}\{R, M\}$ for every finitely generated $R$-module $M$ with infinite projective dimension.
\end{dfn}

Takahashi proved that, over certain commutative rings, thick and resolving subcategories of module, derived, and singularity categories can be classified in terms of specialization-closed subsets of the spectrum, and introduced dominance as a framework for rings over which such classifications are possible.
By \cite[Proposition 5.10]{T23}, hypersurfaces, Cohen--Macaulay local rings of minimal multiplicity with infinite residue field, local rings whose maximal ideals are quasi-decomposable, and Burch local rings are dominant.
Moreover, under the assumption that the ring is Cohen--Macaulay and complete with infinite residue field, \cite[Theorem 1.2]{KT} establishes dominance for non-complete intersection rings of codimension two; non-Gorenstein rings with $e(R)\le 5$; non-Gorenstein G-regular rings with $e(R)\le 6$; non-Gorenstein rings of codimension two with $e(R)\le 11$; and, when the residue field is algebraically closed, rings of finite Cohen--Macaulay representation type and dimension at most two.
Whether every Golod local ring is dominant remains open, as posed in \cite[Question 9.4]{T23} and \cite[Question 1.1(3)]{KT}.

In this way, the dominance of various classes of rings has been studied.
However, the properties of the rings listed above are either incompatible with the Gorenstein property or, in the presence of the Gorenstein property, force the ring to be a hypersurface. 
Accordingly, the following has been posed as one of the central questions in this line of research.
For a commutative Noetherian local ring $(R,\m,k)$, we denote by $e(R)$ the \textit{(Hilbert--Samuel) multiplicity} of $R$, and define the \textit{embedding dimension} of $R$ by $\edim R=\dim_k \m/\m^2$ and the \textit{codimension} of $R$ by $\codim R=\edim R-\dim R$.

\begin{ques}\label{ques dom Gor}
\begin{enumerate}[\rm (1)]
\item \cite[Question 9.6]{T23} Is a Gorenstein dominant local ring a hypersurface?
\item \cite[Question 7.1]{KT} Let $R$ be a Gorenstein local ring with maximal ideal $\m$. Suppose that $R$ is not a complete intersection. If either $\m^3=0$ or $e(R)\le \codim R+2$, is then $R$ necessarily dominant?
\end{enumerate}
\end{ques}

In particular, a negative answer to the former question was desirable also from the viewpoint of the ubiquity of dominance. In other words, it is of considerable significance to find a Gorenstein dominant local ring that is not a hypersurface. By \cite[Proposition 6.2(3)]{T23}, such a ring is automatically not a complete intersection. The case $\lambda>2$ of our results, when specialized to commutative rings, provides precisely such examples. Indeed, for an Artinian ring $R$, being dominant is equivalent to the condition that the only thick subcategories of $\mod R$ containing $R$ are $\add R$ and $\mod R$.

\begin{thm}\label{main commutative}
\begin{enumerate}[{\rm (1)}]
\item Every commutative Noetherian Gorenstein local ring containing a field is dominant if it is not a complete intersection and its maximal ideal has cube zero.
\item Every equicharacteristic commutative Noetherian Gorenstein local ring $R$ with an infinite residue field that is not a complete intersection and satisfies $e(R)\le \codim R+2$ is dominant.
\end{enumerate}
\end{thm}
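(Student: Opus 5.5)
The plan is to prove (1) directly from Theorem \ref{thm1.1}, by viewing $R$ as a weakly symmetric algebra with radical cube zero whose Ext matrix is $1\times1$. Part (2) will then follow by reducing modulo a suitable regular sequence to an Artinian ring of the kind treated in (1).

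For (1), let $(R,\m,k)$ be as in the statement. Since $\m^3=0$, $R$ is Artinian, hence complete. Because $R$ contains a field, the Cohen structure theorem provides a coefficient field. Thus $R$ is a finite-dimensional local $k$-algebra with $R/\m\cong k$, and in particular it is split.

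Next I will check the hypotheses of Notation \ref{notation}:
\begin{itemize}
\item $R$ is indecomposable because it is local.
\item Artinian Gorenstein means $R$ is self-injective with simple socle.
\item There is only one simple module, so $R$ is weakly symmetric.
\end{itemize}
The Ext matrix is then the $1\times1$ matrix $(\dim_k\Ext^1_R(k,k))=(\edim R)$, so $\lambda=\edim R$.

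It remains to see that $\edim R\ge3$, using that $R$ is not a complete intersection:
\begin{itemize}
\item If $\m^2=0$, the Gorenstein condition forces $\edim R\le1$, and $R$ is a hypersurface.
\item If $\edim R=1$, then $R\cong k[x]/(x^n)$, a hypersurface.
\item If $\edim R=2$, then $R$ is a Gorenstein ring of codimension two and hence a complete intersection by Serre's theorem.
\end{itemize}
Therefore $\lambda\ge3\ne2$. Proposition \ref{less than two} and Theorem \ref{more than two} (equivalently Theorem \ref{thm1.1}) then say that the only thick subcategories containing $R$ are $\add R$ and $\mod R$. Dominance follows: if $M$ has infinite projective dimension, then $M$ is not projective, so $\thick\{R,M\}\ne\add R$. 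Hence $\thick\{R,M\}=\mod R\ni k$.

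For (2), let $d=\dim R$. Since the residue field is infinite and $R$ is Cohen--Macaulay, I will choose a maximal regular sequence $\bfb=b_1,\ldots,b_d$ of general elements of $\m\setminus\m^2$ forming a superficial sequence. Set $\bar R=R/(\bfb)$; this gives:
\begin{itemize}
\item $e(R)=\ell(\bar R)$ and $\edim\bar R=\codim R$;
\item $\bar R$ is Artinian Gorenstein, contains a field, and is not a complete intersection, since the complete intersection property passes to and from quotients by regular sequences.
\end{itemize}
Put $n=\edim\bar R$. Then $\ell(\bar R)\le n+2$, so the Hilbert function of $\bar R$ is $(1,n,h_2,\ldots)$ with $\sum_{i\ge2}h_i\le1$. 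Since $\bar R$ is not a hypersurface, $\m^2\bar R\ne0$. Hence the Hilbert function is exactly $(1,n,1)$ and $\m^3\bar R=0$. By (1), $\bar R$ is dominant.

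The final step, which I expect to be the main obstacle, is lifting dominance from $\bar R$ to $R$. My first attempt will be to invoke Takahashi's result in \cite{T23}, used as in \cite{KT}: a local ring is dominant if its quotient by a regular element outside $\m^2$ is dominant. Applied inductively along $\bfb$, this gives dominance of $R$.

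If the available statement requires the regular sequence to be general (or needs completion), I will arrange this using the genericity already built into the choice of $\bfb$ and the infinite residue field. I will also check that completion preserves all the hypotheses, namely being equicharacteristic, Gorenstein, not a complete intersection, and the values of $e$ and $\codim$.
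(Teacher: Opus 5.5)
Your proposal is correct and follows essentially the same route as the paper. In (1) you realize $R$ as a split local weakly symmetric algebra with $E=(\edim R)$ and apply Theorem~\ref{more than two}. In (2) you cut down by a superficial (minimal-reduction) regular sequence, deduce from $\ell(\bar R)\le\edim\bar R+2$ that $\bar R$ has cube-zero maximal ideal, apply (1), and lift dominance via Takahashi's ascent result \cite[Theorem 5.6]{T23}.

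The differences are minor. You prove $\edim R\ge 3$ directly, using Gorenstein with $\m^2=0$, embedding dimension one, and Serre's codimension-two theorem, where the paper cites \cite[Lemma 7.2(1)]{KT}. Also, the lifting step you flag as the main obstacle is elementary for any $R$-regular $x$: after replacing $M$ by a syzygy, $k\in\thick_{\mod R/xR}\{R/xR,M/xM\}$, restriction of scalars is exact, and $R/xR\in\thick_{\mod R}\{R\}$, $M/xM\in\thick_{\mod R}\{M\}$. So no genericity or $x\notin\m^2$ condition is needed.
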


\begin{proof}
(1) Let $(R,\m,k)$ be such a commutative local ring. Since $R$ is commutative, local, Gorenstein, and $\m^3=0$, it admits a coefficient field and hence it is a finite-dimensional split indecomposable weakly symmetric $k$-algebra with radical cube zero. 
In the notation of Notation \ref{notation}, we have $r=1$, $E=(\lambda)$, and $\lambda=\edim R$.
By \cite[Lemma 7.2(1)]{KT}, we have $\edim R\ge 3$.
Hence the assertion is a consequence of Theorem \ref{more than two}.
In fact, every finitely generated $R$-module $M$ with infinite projective dimension does not belong to $\add R$.
Theorem \ref{more than two} shows that $k\in \mod R=\thick_{\mod R}\{R,M\}$.

(2) Let $(R,\m,k)$ be such a commutative local ring. Since $R$ has an infinite residue field $k$, we can choose a system of parameters $x_1,\ldots, x_d$ of $R$ that generates a reduction of $\m$. Set $S=R/(x_1,\ldots, x_d)$ and let $\n$ be the maximal ideal of $S$. Then $S$ is an equicharacteristic Gorenstein local ring with $\ell_S(S)=e(R)\le \codim R+2=\edim S+2$. The equality 
$$
\ell_S(\n^2)=\ell_S(S)-(\ell_S(S/\n)+\ell_S(\n/\n^2))\le (\edim S+2)-(1+\edim S)=1
$$
shows either $\n^2=0$ or $\n^2\cong k$, and hence $\n^3=0$. Since $R$ is not a complete intersection, neither is $S$. By (1), $S$ is dominant, and therefore so is $R$ by \cite[Theorem 5.6]{T23}.
\end{proof}

Theorem \ref{main commutative}(1) answers the first case of Question \ref{ques dom Gor}(2) affirmatively in the equicharacteristic setting. 
Part (2) answers the second, higher-dimensional case when the residue field is infinite.
The result also gives a negative answer to Question \ref{ques dom Gor}(1).
Indeed, Example \ref{examples} exhibits Gorenstein dominant local rings that are not hypersurfaces, including the ring singled out in \cite[Remark 9.7]{T23}.
To the best of our knowledge, our result provides the first examples of Gorenstein dominant local rings that are not complete intersections, and in fact gives an entire class of such rings.

\begin{eg}\label{examples}
We provide explicit examples for parts (1) and (2) in Theorem \ref{main commutative}.
\begin{enumerate}[{\rm (1)}]
\item Let $k$ be a field and $e\ge 3$. Then 
$$k[x_1,\ldots,x_e]/(x_ix_j, x_k^2-x_1^2 \mid 1\le i<j\le e, \ 2\le k\le e)
$$
is a Gorenstein local ring that is not a complete intersection and whose maximal ideal has cube zero.
\item Let $k$ be an infinite field. The completions at their homogeneous maximal ideals of the following rings are equicharacteristic Gorenstein local rings with infinite residue fields that are not complete intersections and whose multiplicities are equal to their codimensions plus two.
\begin{enumerate}
\item The numerical semigroup ring $k[H]$ for a symmetric numerical semigroup $H$ such that
$$
m(H)=\operatorname{emb}(H)+1
\quad\text{and}\quad
\operatorname{emb}(H)\ge 4,
$$
where $m(H)$ and $\operatorname{emb}(H)$ denote the multiplicity and the embedding dimension of the numerical semigroup $H$, respectively; see \cite[Theorem]{K} for the Gorenstein property and \cite[Proposition 1(1) and (2)]{W} for the multiplicity and embedding dimension.
One concrete family of such numerical semigroup rings is
$$
k[t^{c+2},t^{c+3},\ldots,t^{2c+2}]\subset k[t] \quad \text{for}\ c\ge 3.
$$
\item The Stanley--Reisner rings of cycle graphs, viewed as simplicial triangulations of the circle:
$$
k[x_1,\ldots,x_{c+2}]/
(x_ix_j \mid 1\le i<j\le c+2,\ j-i\notin{\{1,c+1\}})\quad \text{for}\ c\ge 3;
$$
see \cite[Corollary 5.6.5]{BH} for the Gorenstein property
and \cite[Lemma 5.1.8 and Corollary 5.1.9]{BH} for the multiplicity. The embedding dimension is clearly $c+2$.
\end{enumerate}
\end{enumerate}
\end{eg}

Our results advance several lines of previous research from multiple perspectives.

\begin{rem}
(1) Dominance implies various homological properties of rings. For example, commutative rings satisfying the assertions of Corollary \ref{Ext Tor rigidity}(1) and (2) are called Ext-friendly and Tor-friendly, respectively.
By \cite[Theorem 6.7]{T23}, every dominant local ring is Ext/Tor-friendly, and satisfies the Auslander--Reiten conjecture. If $R$ is Gorenstein, then Tor-friendliness and Ext-friendliness further imply that it satisfies the homological vanishing conditions $(\mathbf{te})$, $(\mathbf{et})$, $(\mathbf{ee})$ and $(\mathbf{uac})$.
\begin{align*}
&\text{$(\mathbf{te})$ For $M,N\in\mod R$ with $\Tor_{\gg0}^R(M,N)=0$ one has $\Ext_R^{\gg0}(M,N)=0$.} \\
&\text{$(\mathbf{et})$ For $M,N\in\mod R$ with $\Ext_R^{\gg0}(M,N)=0$ one has $\Tor_{\gg0}^R(M,N)=0$.} \\
&\text{$(\mathbf{ee})$ For $M,N\in\mod R$ with $\Ext_R^{\gg0}(M,N)=0$ one has $\Ext_R^{\gg0}(N,M)=0$.} \\
&\text{$(\mathbf{uac})$ There is $n\ge 0$ such that $\Ext_R^{\gg0}(M,N)=0$ for $M,N\in\mod R$ implies $\Ext_R^{>n}(M,N)=0$.}
\end{align*}
A Gorenstein ring satisfying $(\mathbf{uac})$ is called an \textit{AB} ring.
For (Gorenstein) local rings satisfying either $\m^3=0$ or $e(R)\leq \codim R+2$, these properties were previously established from the viewpoint of rigidity by Huneke and Jorgensen; see \cite[Theorem 3.6 and Lemma 3.7]{HJ}. 
Our result recovers these vanishing results from the categorical property of dominance and strengthens them categorically.
Indeed, Ext/Tor-orthogonals naturally give rise to thick subcategories as in the proof of  Corollary \ref{Ext Tor rigidity}.

(2) In view of \cite[Lemma 7.2(1)]{KT}, Theorem \ref{main commutative}(1) amounts to imposing the three conditions that $(R,\m,k)$ is Gorenstein, that $\m^3=0$, and that $\edim R\ge 3$ on a commutative Artinian local ring. All three assumptions are essential: if any one of them is omitted, there exists a commutative Artinian local ring satisfying the other two that is not dominant.
Indeed, the Artinian local complete intersection $k[x,y]/(x^2,y^2)$ of embedding dimension two is Gorenstein and has cube-zero maximal ideal, but it is not a hypersurface, and hence is not dominant by \cite[Proposition 6.2(3)]{T23}.
Jorgensen and \c{S}ega \cite{JS} construct a standard graded Koszul Artinian Gorenstein local algebra $(R,\m)$ containing a field and satisfying $\m^4=0$, $\m^3\ne 0$, $\edim R=5$ but neither $(\mathbf{te})$ nor $(\mathbf{uac})$, and hence not dominant. In particular, the assumption $\m^3=0$ in Theorem \ref{main commutative} is optimal with respect to the nilpotency exponent.
In \cite[Remark 9.8]{T23}, a finite-dimensional algebra over a field with $\m^3=0$ and $\edim R=4$ is given that is neither Gorenstein nor dominant.
Recently, it was proved that this ring is not even quasi-dominant; see \cite[Theorem 5.7]{KMOT}.
\end{rem}

In the remainder of this paper, we make several observations from the viewpoint of commutative algebra concerning the cases $\lambda\le 2$ and the arguments used to prove them.
For commutative rings, the spectral radius $\lambda$ coincides with the embedding dimension and hence takes integer values. Thus, the case $\lambda<2$ occurs only for the field $k$ and the hypersurfaces $k[x]/(x^2)$ and $k[x]/(x^3)$, whose representation theory is straightforward. On the other hand, the case $\lambda=2$ corresponds to Artinian complete intersections of codimension two. These rings are not dominant and admit many nontrivial thick subcategories.
More generally, thick subcategories over complete intersections admit a geometric classification in terms of support varieties; see \cite{CI, KS, S} for instance. In the present case, the relevant support space is $\operatorname{Proj} k[x,y]\cong \mathbb{P}^1_k$, and the thick subcategories of $\underline{\operatorname{mod}}R$ are in an inclusion-preserving bijection with the specialization-closed subsets of $\mathbb{P}^1_k$.
Although we omit the details here to avoid redundancy, this correspondence can also be recovered from Theorem \ref{classification equal two} by linear-algebraic arguments.

The following example shows that a simple object need not be $1$-periodic. This distinction implies that a Serre subcategory of $\A$ need not be $\Omega$-stable.

\begin{eg}
Assume that $\operatorname{char}k\ne2$, and let $R=k[x,y]/(x^2,y^2)$ be a commutative Artinian complete intersection of codimension two. 
In the notation of Notation \ref{notation}, we have $r=1$ and $\lambda=2$.
Consider the $2$-periodic exact complex
$$
\cdots \xrightarrow{x-y} R \xrightarrow{x+y} R \xrightarrow{x-y} R \xrightarrow{x+y} \cdots.
$$
The cokernels of the differentials in this complex are alternately $S=R/(x-y)$ and $T=R/(x+y)$.
Since $\operatorname{char}k\ne2$, these two modules are not isomorphic. Indeed, their annihilator ideals are $(x-y)$ and $(x+y)$, respectively, and these ideals are distinct. Both $S$ and $T$ have length two. Since their only nonzero proper submodule is isomorphic to $k$, while $k$ does not have bounded Betti numbers, both $S$ and $T$ are simple objects of $\A_{\BB(R)}$.
Considering everything above in the graded setting, $\Omega$ interchanges $S$ and $T$. Hence the Serre subcategory of $\A$ generated by either $S$ or $T$ is not $\Omega$-stable.
\end{eg}

We conclude the paper by comparing the methods developed here with related arguments in the commutative algebra literature.
The use of gradings in Sections 2 and 3 is closely related to earlier work of Yoshino \cite{Y03} on totally reflexive modules over local rings with cube zero maximal ideal.  In the setting considered there, Yoshino showed that the ring admits a homogeneous grading $R=R_0\oplus R_1\oplus R_2$ with $R_0=k$ and $R_2=\mathfrak m^2$. He also showed that totally reflexive modules without free summands admit compatible gradings concentrated in two consecutive degrees and have linear free resolutions. The graded structures used in Sections 2 and 3 may be viewed as a representation-theoretic counterpart of this construction.

The numerical arguments used in Sections 3--5 also have close antecedents in the commutative algebra literature on short local rings.  Let $(R,\m,k)$ be a commutative local ring with $\m^3=0$ and $\soc R=\m^2$, and set $e=\dim_k\m/\m^2$ and $s=\dim_k\soc R$.  Lescot \cite{L} introduced exceptional modules, characterized by the absence of $k$ as a direct summand of their successive syzygies, and showed that their Betti numbers satisfy the recurrence
$$
\beta_{n+1}=e\beta_n-s\beta_{n-1}.
$$
In the Gorenstein case one has $s=1$, and this is precisely the scalar specialization of the recurrence
$$
\bfq_{n+1}=E\bfq_n-\bfq_{n-1}
$$
used in this paper.  In fact, for a proper thick subcategory $\X$, the objects of $\A_\X$ and all their iterated syzygies do not have $k$ as a direct summand, so that in the commutative local case they exhibit exactly the exceptional behavior considered in \cite{L}.

A related connection with Koszul modules is provided by Avramov, Iyengar, and \c{S}ega \cite{AIS}. They proved that, over a Gorenstein local ring with $\m^3=0$ and embedding dimension at least two, a finitely generated module is Koszul if and only if it has no direct summand isomorphic to $\Omega_R^{-i}k$ for any $i\ge1$. In particular, the indecomposable non-Koszul modules are precisely the negative syzygies of $k$. This characterization is also closely related to the properness of thick subcategories, and hence, in the commutative Artinian setting, to dominance.

\begin{ac}
The author would like to thank Ryo Takahashi and Yuya Otake for valuable comments.
The author was supported by JSPS Overseas Research Fellowships.
ChatGPT was used as a supplementary aid in the preparation of this manuscript.
\end{ac}

\end{document}